\documentclass{amsart}

\usepackage{a4wide}
\usepackage{lineno}
\usepackage{mathtools,amsmath,amsthm,amssymb,amsfonts}
\usepackage{graphicx,color,xcolor,colortbl}
\usepackage{subfigure,caption,subcaption}
\usepackage{tabularx,array}
\usepackage[colorlinks=true,allcolors=blue]{hyperref}
\usepackage{url}
\usepackage{tikz}
\usepackage{enumitem}
\usepackage[normalem]{ulem}

\newtheorem{theorem}{Theorem}
\newtheorem{lemma}{Lemma}
\newtheorem{corollary}[lemma]{Corollary}

\newcommand{\RR}{\mathbb{R}}

\newcommand{\NN}{\mathbb{N}}
\newcommand{\prim}[1]{q_{#1}^{\#}}
\newcommand{\lnthree}{\ln \ln \ln}

\DeclareMathOperator{\rad}{rad}

\begin{document}

\title[Finite arithmetic form of  Robin's inequality and equivalence to  RH]{A finite arithmetic form of Robin's inequality and its equivalence to the Riemann hypothesis}

\author{Challenger Mishra}
\address{Department of Computer Science and Technology, University of Cambridge CB3 0FD}
\curraddr{}
\email{cm2099@cam.ac.uk}
\thanks{}

\author{Rahul Sarkar}
\address{Department of Mathematics, University of California, Berkeley, CA 94720}
\curraddr{}
\email{rsarkar@berkeley.edu}
\thanks{}

\subjclass[2020]{Primary 11N56, 11M26; Secondary 11A25}

\keywords{sum-of-divisors function, bounds for the sum-of-divisors function,
Robin's inequality, Riemann hypothesis, superabundant numbers,
colossally abundant numbers}
\date{}
\dedicatory{}
\commby{}
\begin{abstract}
Robin's inequality reformulates the Riemann hypothesis as a bound on the sum-of-divisors function $\sigma$. We introduce a finite-truncation version of Robin's inequality determined intrinsically by the arithmetic structure of $n$, 
\[
\frac{\sigma(n)}{n}
<
e^\gamma
\sum_{j=0}^{\omega(n)}
\frac{(\ln\ln\ln n)^j}{j!},
\qquad n>5040,
\]
where $\omega(n)$ denotes the number of distinct prime factors of $n$, and $\gamma$ is the Euler-Mascheroni constant. The resulting bound is pointwise stronger than Robin's inequality and we prove it unconditionally for integers with $\omega(n)\le 6$, primorials, odd integers and square-free integers. We also prove that this inequality is equivalent to Robin's inequality, and hence to the Riemann hypothesis. Consequently, the Riemann hypothesis is equivalent to the truncated inequality holding for all colossally abundant numbers. We further show that if the inequality fails, its minimal counterexample must be a superabundant number, placing any obstruction within a rigid extremal class of integers. 
\end{abstract}

\maketitle

\section{Introduction}
\label{sec:intro}

In \cite{robin1984grandes}, Robin conjectured the inequality (called \textit{Robin's inequality}) $\sigma(n) < e^{\gamma} n \ln \ln n$, for all $n > 5040$, where $\sigma(n)$ is the sum-of-divisors function of $n$, and $\gamma=0.577215...$ is the Euler-Mascheroni constant, and proved that it is equivalent to the Riemann hypothesis (RH). Earlier it was shown by Ramanujan that RH itself implies this inequality~\cite{ramanujan1915highly}, albeit the asymptotic form, and hence this inequality is also called the \textit{Ramanujan-Robin criterion} \cite[Chapter~7]{broughan2017equivalents}. Since then, this inequality has been unconditionally proven for many classes of integers \cite{luca2025robin}. These include: odd integers greater than~$9$ and square--free integers greater than~$30$ \cite{choie2007robin}; 
integers greater than~$720$ expressible as a sum of two squares \cite{banks2009nicolas}; 
integers free of $p^5$ \cite{choie2007robin}, $p^7$ \cite{sole2012robin}, $p^{11}$ \cite{broughan2015robin}, 
$p^{20}$ \cite{platt2021robin}, and $p^{21}$ \cite{axler2023robin} divisors for any prime~$p$.  

If Robin's inequality is true, then for every $n > 5040$ there exists an integer $A(n)$ such that
\begin{equation}
\label{eq:existence-An}
    \sigma(n) < e^{\gamma} n \sum_{j=0}^{A(n)} \frac{(\ln \ln \ln n)^j}{j!}.
\end{equation}
It is easy to see how the right hand side above reduces to $e^\gamma n \ln \ln n$ when summed to infinity instead, recovering Robin's inequality. In this paper we consider two cases: $A(n)=\pi(n)$ and $A(n)=\omega(n)$, where $\pi$ is the \textit{prime counting function} that counts the number of primes less than or equal to  $n$,  and $\omega$ is the \textit{prime omega function}, the number of distinct primes in the  factorization of $n$. We refer to the inequality \eqref{eq:existence-An} for these two cases as the \textit{$\pi-$inequality} and \textit{$\omega-$inequality} respectively. For both these cases, we prove the inequality~\eqref{eq:existence-An} for many classes of integers. Specifically we establish conditions under which the inequality holds unconditionally, and find properties of a hypothetical minimal counterexample in each case. For the $\pi-$inequality we show that the minimal counterexample if it exists is a superabundant number, whereas for the $\omega-$inequality, we show that the minimal counterexample satisfies several rigid multiplicity bounds and is in fact a superabundant number. We now proceed with the \textit{$\omega-$inequality}:
\begin{equation}
\label{eq:omega-n-conj}
    \frac{\sigma(n)}{n} < C(n) := e^{\gamma} \sum_{j=0}^{\omega(n)} \frac{(\ln \ln \ln n)^j}{j!}.
\end{equation}
\begin{figure}[t]
\centering 
\includegraphics[width=0.95\textwidth]{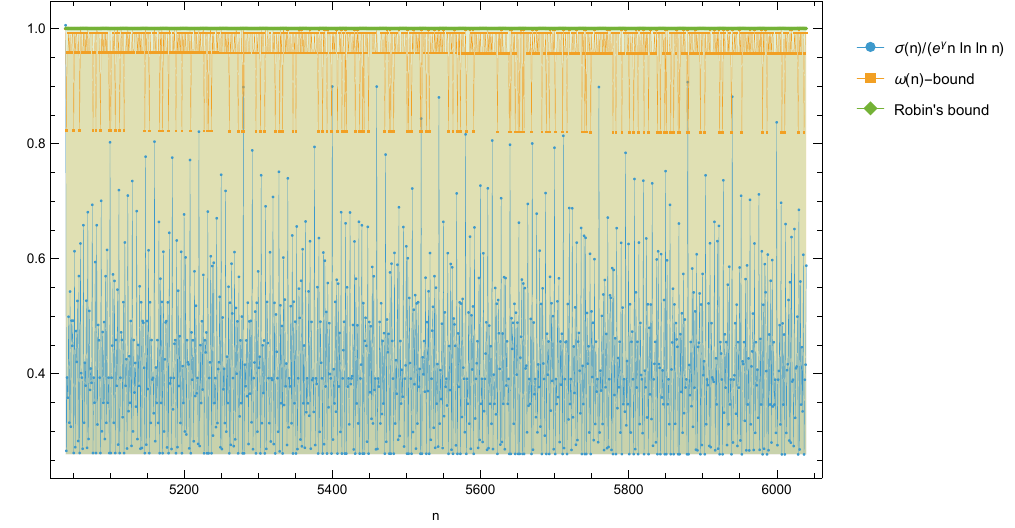}
\caption{We plot the ratio $\frac{\sigma(n)}{e^\gamma n \ln \ln n}$ in the range $n\in [5040,6040]$ together with two upper bounds: (1) the $\omega-$inequality: $\frac{1}{\ln\ln n}\sum_{j=0}^{\omega(n)} \frac{(\ln \ln \ln n)^j}{j!}$, and (2) Robin's bound which in this normalization is identically 1. The point \(n=5040\), the largest known counterexample common to both inequalities, is included for reference.}
\label{fig:evidence_plot}
\end{figure}
For integers in the range \(3 \le n \le 5040\), Robin’s inequality fails for precisely the values~\cite{OEIS_A067698}:
\begin{align*}
&3, 4, 5, 6, 8, 9, 10, 12, 16, 18, 20, 24, 30, 36, 48, 60,&\\
&72, 84, 120, 180, 240, 360, 720, 840, 2520, 5040.&
\end{align*}
Notably, the \(\omega-\)inequality~\eqref{eq:omega-n-conj} fails for exactly the same set of integers, with the sole additional exception \(n=7\).
In Figure~\ref{fig:evidence_plot}, we present numerical evidence for both Robin’s inequality and the \(\omega-\)inequality for integers \(n \ge 5040\), where the largest numerically observed counterexample which is common to both inequalities and occurs at \(n=5040=7!\) is kept deliberately. In this figure we have normalized the $\omega-$inequality~\eqref{eq:omega-n-conj} by $\ln \ln n$ for better visualization. In this normalization, Robin's bound is identically $1$. 

\subsection{Organization of the paper}
\label{ssec:structure}

Section~1 introduces the $\omega-$ and $\pi-$inequalities, recalls the
relevant background on Robin's inequality, superabundant and colossally
abundant numbers, fixes the notation used throughout the paper, and
states the main results. Section~\ref{sec:K-less-than-6} proves the
$\omega-$inequality for all $n>5040$ with $\omega(n)\leq 6$.
Section~\ref{sec:primorials} establishes the $\omega-$inequality for
primorial numbers and deduces the square-free and odd cases.
Section~\ref{sec:minimal-counterex} studies hypothetical minimal
counterexamples, deriving strong restrictions on their prime
factorizations and multiplicities and ultimately proving that a minimal
counterexample to the $\omega-$inequality is superabundant.
Section~\ref{sec:equivalance} proves that the $\omega-$inequality is
equivalent to Robin's inequality, and hence to the Riemann hypothesis.
Appendix~A collects auxiliary analytic estimates and additional
multiplicity bounds, while Appendix~B records the finite computations
used in the proofs and the corresponding reproducibility information.

\subsection{Background and notation}
\label{ssec:background}

We denote the set of all positive integers as $\NN$. For $n \in \NN$, we denote its prime factorization as $n=p_1^{a_1} p_2^{a_2} \dots  p_K^{a_K}$, where $p_1 < p_2 < \dots < p_K$ are primes, and $a_k \in \NN$ for all $k$. The \textit{prime support} of $n$ is the set of all prime numbers appearing in its prime factorization. We define the \textit{radical} of $n$, denoted $\rad(n)$, as the product $\prod_{k=1}^K p_k$. Integers such that $n=\rad(n)$ are called \textit{square-free} integers. The sum-of-divisors function $\sigma(n)$ is the sum of all positive integers that divide $n$, and it has the property that if $m, n \in \NN$ are coprime, then $\sigma(mn) = \sigma(m) \sigma(n)$. Using this fact,  $\sigma(n)$ and $\sigma(n)/n$ can be expressed as
\begin{equation}
\label{eq:sigma-n}
\begin{split}
    \sigma(n) &= \prod_{k=1}^{K} \left( 1 + p_k + \dots + p_k^{a_k} \right) = \prod_{k=1}^{K} \frac{p_k^{a_k+1}-1}{p_k-1}, \\
    \frac{\sigma(n)}{n} &= \prod_{k=1}^{K} \left( 1 + \frac{1}{p_k} + \dots + \frac{1}{p_k^{a_k}} \right) = \prod_{k=1}^{K} \frac{1 - (1/p_k)^{a_k+1}}{1-1/p_k}.
\end{split}
\end{equation}
We refer to the quantity $\sigma(n)/n$ as the \textit{divisor ratio}. The number of distinct prime factors of $n$ is denoted by
$\omega(n)$, so that $K=\omega(n)$. Throughout this paper, we will denote the $k^{\text{th}}$ prime number as $q_k$, for all $k \in \NN$, that is, $q_1 = 2$, $q_2 = 3$, $q_3 = 5$, and so on. We also define the \textit{primorial numbers}
\begin{equation}
\label{eq:primorial}
    \prim{k} := \prod_{j=1}^{k} q_j, \quad k \in \NN.
\end{equation}
Expressions of the form $\sum_{p \le x}$ (resp. $\prod_{p \le x}$) will always denote sums (resp. products) over all primes $p$ less than or equal to $x \in \RR$, unless specified otherwise. We denote $\vartheta(x) := \sum_{p \le x} \ln p$, and note that $\vartheta(q_k) = \ln \prim{k}$ for all $k \in \NN$.

Recall that a positive integer \(n\) is superabundant (SA) if the divisor ratio satisfies 
\begin{equation*}
    \frac{\sigma(m)}{m}< \frac{\sigma(n)}{n},~\text{for all~} 0<m<n.
\end{equation*}
Superabundant numbers were studied in great detail by Alaoglu and Erd\"os in~\cite{alaoglu1944highly}, who showed that if $n$ is superabundant with prime factorization $n=p_1^{k_1} p_2^{k_2}\ldots p_{\omega(n)}^{k_{\omega(n)}}$, then its factorization satisfies several properties, of which three are particularly significant, and these will interest us the most in this paper:
\begin{itemize}
    \item $\rad(n)=q^\#_{\omega(n)}$,
    \item $k_1\ge k_2\ge \ldots \ge k_{\omega(n)}$ (\cite[Theorem 1]{alaoglu1944highly}),  
    \item $k_{\omega(n)}=1$ except when $n=4$ or $36$ (\cite[Theorem 3]{alaoglu1944highly}). 
\end{itemize}
The first property above says that the prime factorization of $n$ contains the first $\omega(n)$ primes. It was later shown that the minimal counterexample to Robin's inequality, if it exists, must be a superabundant number~\cite{akbary2009superabundant}. We will return to this point in Section~\ref{sec:minimal-counterex}. 

In the context of Robin's inequality, another class of integers namely \textit{colossally abundant} numbers (CA) is well-studied \cite[Chapter~6]{broughan2017equivalents}. A positive integer $n$ is called a colossally abundant number if $\exists~\epsilon>0$, dependent on $n$, for which, 
\begin{equation*}
    \frac{\sigma(m)}{m^{1+\epsilon}} \le \frac{\sigma(n)}{n^{1+\epsilon}},~\text{ for all } m > 1.
\end{equation*}
Every colossally abundant number is also a superabundant number. It was shown by Robin~\cite[Proposition 1]{robin1984grandes} that if Robin's inequality holds for two consecutive colossally abundant numbers, then it holds for all integers in between. 

We define the \textit{truncated exponential sum} function $P_N(x):= \sum_{j=0}^{N}\frac{x^j}{j!}$, for any $ x \in \mathbb{R}$, and integer $N \ge 0$. Note that for all $N \in \mathbb{N}$, the function $P_N$ is positive and strictly increasing for all $x \ge 0$. Moreover, differentiating in $x$ gives $P'_N(x) = P_{N-1}(x)$.  Also, for $x \ge 0$, and for all $N \ge 0$, we define the \textit{truncated exponential tail function} $T_N(x) := e^x - P_N(x)$, and we note that it is a strictly increasing function, for $x \ge 0$, for every $N \ge 1$. The \textit{Lambert $W$ function}, denoted $W$, inverts the map $[0,\infty) \ni x \mapsto x e^x$, and we will take its domain to be $[0,\infty) \subset \RR$. Since $x e^x$ is increasing and injective, so is $W$, and satisfies the simple bound $W(x) \le \frac{1}{2} \left( \sqrt{4x+1} -1 \right)$, for all $x \ge 0$. Moreover, it satisfies the identity $e^{W(x)} = \frac{x}{W(x)}$, for all $x \ge 0$.

\subsection{Main results}
\label{ssec:contrib}

The main results of this paper are as follows.
\begin{enumerate}[label=(\roman*)]
    \item \emph{Unconditional results.}
    We prove that the $\omega-$inequality~\eqref{eq:omega-n-conj}
    holds unconditionally for all $n>5040$ with $\omega(n)\leq 6$,
    and for all primorial numbers $\prim{k}>5040$. As consequences,
    the $\omega-$inequality also holds for all square-free integers and
    all odd integers greater than $5040$.

    \item \emph{Structure of a minimal counterexample.}
    We show that a hypothetical minimal counterexample $n$ to the
    $\omega-$inequality, with $\omega(n)=K$, has prime support
    \[
        \rad(n)=q_K^\#,
    \]
    has non-increasing prime multiplicities, and has unit multiplicity
    at its largest prime factor. We further obtain quantitative bounds
    on its multiplicities, reducing the search at fixed $K$ to finitely
    many exponent tuples, and prove that any minimal counterexample must
    be superabundant. We also establish the corresponding superabundance result for a minimal counterexample to the $\pi-$inequality.

    \item \emph{Equivalence with the Riemann hypothesis.}
    We prove that the $\omega-$inequality is equivalent to Robin's
    inequality, and hence to the Riemann hypothesis. 
    We give a
    proof of this equivalence using the structure of a
    minimal counterexample. 
    As a consequence, the Riemann hypothesis is also
    equivalent to the $\omega-$inequality restricted to colossally
    abundant numbers.
\end{enumerate}

\section{\texorpdfstring{The $\omega-$inequality for $\omega(n) \le 6$}{}}
\label{sec:K-less-than-6}

The goal of this section is to prove that the $\omega-$inequality holds
for all $n>5040$ with $\omega(n)\leq 6$, as stated in
Lemma~\ref{lem:six-primes}. We first establish two auxiliary lemmas that
will be used both in its proof and repeatedly throughout the paper.

\begin{lemma}[Monotonicity]
\label{lem:monotonicity}
Let $m, n >e^e$ be integers with $m < n$ and $\omega(m) \le \omega(n)$. Then $C(m) < C(n)$.
\end{lemma}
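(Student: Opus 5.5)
The plan is to separate the two sources of growth in $C(\cdot)=e^\gamma P_{\omega(\cdot)}(\ln\ln\ln \cdot)$: the number of terms $\omega(\cdot)$ and the argument $\ln\ln\ln(\cdot)$. Set $x_m:=\ln\ln\ln m$ and $x_n:=\ln\ln\ln n$. Since $m,n>e^e$, we have $\ln m>e$, so $\ln\ln m>1$ and $x_m>0$. Because $t\mapsto \ln\ln\ln t$ is strictly increasing on $(e,\infty)$ and $m<n$, we also get $0<x_m<x_n$. The chain of inequalities will therefore be
\[
C(m)=e^\gamma P_{\omega(m)}(x_m)\le e^\gamma P_{\omega(n)}(x_m)< e^\gamma P_{\omega(n)}(x_n)=C(n).
\]

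For the first inequality, the point is that $P_N(x)$ is nondecreasing in $N$ when $x\ge 0$. Indeed, $P_{N'}(x)-P_N(x)=\sum_{j=N+1}^{N'} x^j/j!\ge 0$ whenever $N\le N'$. Applying this with $N=\omega(m)\le N'=\omega(n)$ and $x=x_m>0$ gives $P_{\omega(m)}(x_m)\le P_{\omega(n)}(x_m)$. The inequality is strict when $\omega(m)<\omega(n)$, but we do not need strictness here.

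For the second inequality, we use the fact recorded in the background section that $P_N$ is strictly increasing on $[0,\infty)$ for every $N\ge 1$. We also need $\omega(n)\ge 1$, which holds because $n>e^e>1$. Combined with $0<x_m<x_n$, this gives $P_{\omega(n)}(x_m)<P_{\omega(n)}(x_n)$. If one wants to justify the monotonicity directly, note that $P_N'(x)=P_{N-1}(x)\ge 1>0$ for $x\ge 0$. Multiplying through by the constant $e^\gamma>0$ completes the argument.

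There is no real obstacle here. The only step that needs care is confirming that the hypothesis $m,n>e^e$ is exactly what makes $\ln\ln\ln$ defined and positive at both points. Positivity of $x_m$ matters: it is what guarantees that adding terms $x^j/j!$ cannot decrease $P_N$, whereas for negative $x$ the odd-degree terms could. The strict inequality in the conclusion comes entirely from the second step, via $m<n$.
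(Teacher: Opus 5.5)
Your proposal is correct and is essentially the argument the paper leaves implicit: the paper says only ``Proof is trivial.'' Your two steps fill that in, namely that $P_N(x)$ is nondecreasing in $N$ for $x\ge 0$, and that $P_{\omega(n)}$ is strictly increasing in $x$ on $[0,\infty)$ since $\omega(n)\ge 1$, applied to $0<\ln\ln\ln m<\ln\ln\ln n$.
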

\begin{proof}
Proof is trivial.
\end{proof}
Lemma~\ref{lem:monotonicity} immediately yields the following
substitution principle.
\begin{lemma}[Substitution]
\label{lem:substitution}
Let $n = q_1^{a_1} \dots q_K^{a_K}$ for some $a_1, \dots, a_K \in \NN$, such that $\frac{\sigma(n)}{n} < C(n)$ holds. Then for any $m$ with the prime factorization $m=p_1^{a_1} \dots p_K^{a_K}$, we have 
\begin{equation}
\label{eq:substitution}
\frac{\sigma(m)}{m} \le \frac{\sigma(n)}{n} < C(n) \le C(m),   
\end{equation}
where the first and last inequalities are equalities if and only if $m=n$.
\end{lemma}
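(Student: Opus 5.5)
The plan is to compare $m$ and $n$ prime by prime. By the paper's convention for prime factorizations, $p_1<p_2<\dots<p_K$ are primes. Since $q_k$ is the $k^{\text{th}}$ prime, a simple induction gives $p_k\ge q_k$ for every $k$: the $k$ distinct primes $p_1,\dots,p_k$ cannot all be smaller than $q_k$. This single observation will drive both outer inequalities in \eqref{eq:substitution}. The middle inequality $\frac{\sigma(n)}{n}<C(n)$ is the hypothesis.

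For the first inequality, I will use the product formula \eqref{eq:sigma-n}. It writes $\frac{\sigma(m)}{m}=\prod_{k=1}^K f_{a_k}(p_k)$ and $\frac{\sigma(n)}{n}=\prod_{k=1}^K f_{a_k}(q_k)$, where $f_a(p)=1+p^{-1}+\dots+p^{-a}$. For fixed $a\ge1$, $f_a$ is strictly decreasing in $p>0$. Hence $f_{a_k}(p_k)\le f_{a_k}(q_k)$, with equality iff $p_k=q_k$. Multiplying these positive factors gives $\frac{\sigma(m)}{m}\le\frac{\sigma(n)}{n}$. Equality holds iff $p_k=q_k$ for all $k$, that is, iff $m=n$.

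For the last inequality, I will note that $m=\prod p_k^{a_k}\ge\prod q_k^{a_k}=n$, again with equality iff $m=n$, and that $\omega(m)=\omega(n)=K$. If $m=n$ there is nothing to prove. Otherwise $n<m$ and $\omega(n)\le\omega(m)$, so Lemma~\ref{lem:monotonicity} applied with the roles $(n,m)$ gives $C(n)<C(m)$. This yields $C(n)\le C(m)$, with equality iff $m=n$.

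The only delicate point is the range hypothesis $n>e^e$ required by Lemma~\ref{lem:monotonicity}. Below this threshold, $\ln\ln\ln n$ is negative or undefined, and $P_K$ need not be monotone in that region. I expect this bookkeeping to be the main obstacle rather than any real difficulty. I will handle it by stating explicitly that $C(\cdot)$ is used, as throughout the paper, only for arguments exceeding $e^e$. Concretely, $n>e^e$ is implicit in the hypothesis, since all applications concern $n>5040$, and $m\ge n$ then automatically also satisfies $m>e^e$. With that convention, the chain \eqref{eq:substitution} and its equality cases follow directly.
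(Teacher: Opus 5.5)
Your proposal is correct and is essentially the paper's proof. The termwise comparison $p_k\ge q_k$ gives $\sigma(m)/m\le\sigma(n)/n$ via \eqref{eq:sigma-n}, and $n<m$ with $\omega(m)=\omega(n)$ gives $C(n)<C(m)$ via Lemma~\ref{lem:monotonicity}, with both equalities holding exactly when $m=n$.

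On the $e^e$ threshold, which the paper leaves implicit: no convention is needed. For every admissible $n\le 15$, namely $n\in\{2,4,6,8,12\}$, the hypothesis $\sigma(n)/n<C(n)$ either fails or is undefined, so the hypothesis already forces $n>e^e$.
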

\begin{proof}
Assume $m \ne n$. By definition, $q_k$ is the $k^{\text{th}}$ prime, and thus $q_k \le p_k$, for every $k$, and there exists $j$ such that $q_j < p_j$. This implies $n < m$, and the inequality $C(n) < C(m)$ now follows from Lemma~\ref{lem:monotonicity}, since $\omega(m) = \omega(n) = K$. For the first inequality in~\eqref{eq:substitution}, note that $q_k \le p_k$ implies $ \left( 1 + \frac{1}{p_k} + \dots + \frac{1}{p_k^{a_k}} \right) \le \left( 1 + \frac{1}{q_k} + \dots + \frac{1}{q_k^{a_k}} \right) $ for every $k$, and the inequality is strict when $k=j$. Together with \eqref{eq:sigma-n}, this implies $\frac{\sigma(m)}{m} < \frac{\sigma(n)}{n}$.
\end{proof}

The substitution Lemma~\ref{lem:substitution} reduces a fixed exponent tuple to the case
supported on the first $K$ primes. We first apply this reduction when
$K\leq 3$.

\begin{lemma}[Three primes]
\label{lem:three-primes}
Let $m = p_1^{a_1} \dots p_K^{a_K}$ be the prime factorization of an arbitrary positive integer $m$, with $\omega(m)=K$. Then $\frac{\sigma(m)}{m} < \prod_{k=1}^{K} \frac{q_k}{q_k - 1}$, and moreover, when $K \le 3$, and $m > 5040$, we have $\frac{\sigma(m)}{m} < C(m)$.
\end{lemma}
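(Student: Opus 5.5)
The plan has two steps. First I prove the general product bound. Then, for $K\le 3$, I compare that bound with the smallest value $C(m)$ can take when $m>5040$.

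For the first claim I will use the second line of \eqref{eq:sigma-n}:
\[
\frac{\sigma(m)}{m}=\prod_{k=1}^K \frac{1-p_k^{-(a_k+1)}}{1-1/p_k}<\prod_{k=1}^K\frac{p_k}{p_k-1}.
\]
The inequality is strict because every numerator is strictly less than $1$. The map $t\mapsto t/(t-1)$ is decreasing for $t>1$. Since $p_1<\dots<p_K$ are distinct primes, $p_k\ge q_k$ for every $k$, which gives $p_k/(p_k-1)\le q_k/(q_k-1)$. Taking the product over $k$ yields the first claim. This is the same comparison used in the proof of Lemma~\ref{lem:substitution}.

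For the second claim, let $K=\omega(m)\le 3$. The first part gives $\sigma(m)/m<B_K$, where
\[
B_1=2,\qquad B_2=2\cdot\tfrac32=3,\qquad B_3=3\cdot\tfrac54=\tfrac{15}{4}.
\]
On the other side, $C(m)=e^\gamma P_K(x_m)$ with $x_m=\ln\ln\ln m$. Since $m>5040>e^e$, we have $x_m>x_0:=\ln\ln\ln 5040>0$. Because $P_K$ is strictly increasing on $[0,\infty)$, this gives $C(m)>e^\gamma P_K(x_0)$. So it suffices to check the three numerical inequalities
\[
e^\gamma P_K(x_0)\ge B_K,\qquad K=1,2,3.
\]

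Numerically, $\ln 5040\approx 8.525$, $\ln\ln 5040\approx 2.143$ and $x_0\approx 0.762$. With $e^\gamma\approx 1.7811$ this gives:
\begin{itemize}
\item $K=1$: $e^\gamma P_1(x_0)\approx 3.14>2$;
\item $K=2$: $P_2(x_0)\approx 2.052$, so $e^\gamma P_2(x_0)\approx 3.65>3$;
\item $K=3$: $P_3(x_0)\approx 2.126$, so $e^\gamma P_3(x_0)\approx 3.786>3.75$.
\end{itemize}

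The main obstacle is the case $K=3$, where the margin is only about $0.036$. There I would give rigorous bounds rather than decimals:
\begin{itemize}
\item a lower bound $x_0\ge 0.76$, justified by $\ln 5040>8.52$ and $\ln 8.52>2.14$;
\item the lower bound $e^\gamma>1.781$.
\end{itemize}
I would then evaluate $P_3(0.76)>2.12$ by hand, giving $e^\gamma P_3(x_0)>1.781\cdot 2.12>3.775>3.75$. Chaining $\sigma(m)/m<B_K\le e^\gamma P_K(x_0)<C(m)$ then finishes the proof.
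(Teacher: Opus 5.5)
Your proof is correct and follows essentially the same route as the paper. Both arguments bound $\sigma(m)/m$ by $\prod_{k\le K} q_k/(q_k-1)$ using $p_k\ge q_k$ (the paper passes through $n=q_1^{a_1}\cdots q_K^{a_K}$, you pass through $\prod_k p_k/(p_k-1)$, which is the same comparison), and both then use $\ln\ln\ln m>\ln\ln\ln 5040\approx 0.7622$ with monotonicity of $P_K$ to reduce to the three numerical checks $B_K<e^\gamma P_K(x_0)$. Your explicit $K=3$ verification is sound; its last link, $\ln 2.14>0.76$, which you left implicit, also holds since $\ln 2.14\approx 0.7608$.
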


\begin{proof}
Consider the number $n := q_1^{a_1} \dots q_K^{a_K}$. Then by the same argument as in the proof of Lemma~\ref{lem:substitution}, we have 
\begin{equation}
\label{eq:three-primes-proof-1}
    \frac{\sigma(m)}{m} \le \frac{\sigma(n)}{n} = \prod_{k=1}^{K} \left( 1 + \frac{1}{q_k} + \dots + \frac{1}{q_k^{a_k}} \right) < \prod_{k=1}^{K} \frac{q_k}{q_k - 1} =: r_K.
\end{equation}
Next, assuming $m > 5040$, we have $\lnthree(m) > 0.7622$. Therefore, $C(m) > e^\gamma \sum_{k=0}^{K} \frac{0.7622^k}{k!}$. Additionally assuming $K \le 3$, one can check numerically that $r_K < e^\gamma \sum_{k=0}^{K} \frac{0.7622^k}{k!}$ (see \ref{list:power-three-primes}), which completes the proof\footnote{This simple proof breaks down for $K=4$ and higher integer values of $K$.}.
\end{proof}

The argument of Lemma~\ref{lem:three-primes} extends to larger values of $\omega(n)$,
at the cost of a finite computational verification. We carry this out
for $\omega(n)\leq 6$.

\begin{lemma}[Six primes]
\label{lem:six-primes}
Let $m \in \NN$ be such that $\omega(m) \le 6$. If $m > 5040$, then $\frac{\sigma(m)}{m} < C(m)$.
\end{lemma}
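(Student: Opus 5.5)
For $\omega(m)\le 3$ the statement is already contained in Lemma~\ref{lem:three-primes}, so only $K:=\omega(m)\in\{4,5,6\}$ remains. I would split each such $K$ into a large range and a small range. In the large range, the crude uniform bound from Lemma~\ref{lem:three-primes} wins. In the small range, a finite computation settles the matter.

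\emph{Large range.} By Lemma~\ref{lem:three-primes}, every $m$ with $\omega(m)=K$ satisfies
\[
\frac{\sigma(m)}{m}<r_K:=\prod_{k=1}^{K}\frac{q_k}{q_k-1}.
\]
The three values needed are $r_4=35/8=4.375$, $r_5=77/16=4.8125$ and $r_6=1001/192\approx 5.2135$. Since $P_K$ is strictly increasing on $[0,\infty)$ and $C(m)=e^\gamma P_K(\lnthree m)$, it suffices to find the unique $x_K>0$ with $e^\gamma P_K(x_K)=r_K$. Every $m>M_K:=\exp(\exp(e^{x_K}))$ then satisfies $\frac{\sigma(m)}{m}<r_K\le C(m)$. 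Rough numerics give the following values.
\begin{itemize}
\item For $K=4$: $x_4\approx 0.90$, so $M_4\approx 1.3\cdot 10^5$.
\item For $K=5$: $x_5\approx 0.99$, so $M_5\approx 2.4\cdot 10^6$.
\item For $K=6$: $x_6\approx 1.08$, so $M_6\approx 1.3\cdot 10^8$.
\end{itemize}
In the proof I would fix explicit rational values slightly above these and verify $e^\gamma P_K(x_K)>r_K$ rigorously, in the same way as the numerical check in Lemma~\ref{lem:three-primes}.

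\emph{Small range.} What remains is the finite set of $m$ with $5040<m\le M_K$ and $\omega(m)=K$. Rather than enumerating all such integers, I would organise the computation by exponent tuples using Lemma~\ref{lem:substitution}. For a tuple $(a_1,\dots,a_K)$, set $n=q_1^{a_1}\cdots q_K^{a_K}$. Every $m=p_1^{a_1}\cdots p_K^{a_K}$ has $m\ge n$ and $\frac{\sigma(m)}{m}\le\frac{\sigma(n)}{n}$. By Lemma~\ref{lem:monotonicity}, it is therefore enough to check
\[
\frac{\sigma(n)}{n}<e^\gamma P_K\bigl(\lnthree \max(n,5041)\bigr)
\]
for each tuple with $n\le M_K$. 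This is a short finite list, since $q_1\cdots q_K\le n\le M_K$ bounds every exponent.

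\emph{Leftover tuples.} Some tuples may fail this test, for instance those with $n<5040$ where the $\max$ is too lossy. For each such tuple I would instead enumerate directly the integers $m$ of that exponent shape with $5040<m\le M_K$, and check $\frac{\sigma(m)}{m}<C(m)$ for each one. Alternatively, one can refine by substituting only the smaller primes and keeping the largest prime $p_K$ free, since then $m\ge n\,p_K^{a_K}/q_K^{a_K}$ pushes $m$ above the threshold quickly.

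\emph{Main obstacle.} The main difficulty is keeping this computation rigorous and tractable for $K=6$, where $M_6\approx 10^8$. I expect the tuple-based reduction to leave only a handful of exceptional shapes there, each requiring a modest enumeration that would be recorded in the appendix.
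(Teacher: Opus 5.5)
Your proof is correct. Its large-range half is the paper's argument: the paper also beats the uniform bound $r_K$ of Lemma~\ref{lem:three-primes}. It uses a single common cutoff $m>1.5\times 10^8$ (where $\lnthree m>1.076$ and $e^\gamma P_K(1.076)>r_K$ for $K=4,5,6$) instead of your $K$-dependent cutoffs $M_K$.

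The genuine difference is the finite range. The paper verifies the $\omega$-inequality by brute force at every integer in $(5040,1.5\times 10^8]$; its check in fact runs to $10^9$ and is reused throughout Section~\ref{sec:minimal-counterex}. You instead use Lemma~\ref{lem:substitution} to compress this to integers supported on the first $K$ primes. That is on the order of $10^3$ evaluations rather than $10^8$. The paper's route is conceptually simpler and comes from a computation it needs anyway; yours is far cheaper and easier to audit.

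For $n>5040$ your tuple test is exactly the $\omega$-inequality at $n$, so the $\max$ can only lose on tuples with $n\le 5040$. Among those the only failure is $(4,2,1,1)$, i.e.\ $n=5040$ itself. For $K=6$ there are no such tuples at all, since $\prim{6}=30030$, so the obstacle you anticipate at $K=6$ does not arise.

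One small numerical slip: in fact $x_5\approx 0.9946$, so $M_5\approx 3.1\times 10^6$. A rational ``slightly above $0.99$'' would fail $e^\gamma P_5(x)>r_5$, though your planned rigorous verification would catch this.
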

\begin{proof}
Let $K = \omega(m)$, and assume $4\le K \le 6$, as the $K \le 3$ case is already proved in Lemma~\ref{lem:three-primes}. The proof proceeds similarly to Lemma~\ref{lem:three-primes}, except we first take $m > 1.5 \times 10^8$, so that $\lnthree(m) > 1.076$. Then it follows that $C(m) > e^{\gamma} \sum_{k=0}^K \frac{1.076^k}{k!}$. With $r_K$ as defined in \eqref{eq:three-primes-proof-1}, a numerical check shows $r_K < e^{\gamma} \sum_{k=0}^K \frac{1.076^k}{k!}$, for $K = 4, 5, 6$ (see \ref{list:power-six-primes}). Finally, the case $m\in [5041, 1.5 \times 10^8]$ is also verified to be true by another numerical check (see \ref{list:check-omega-n-1b}). 
\end{proof}
The contrapositive of Lemma~\ref{lem:substitution} also gives the following observation:
\begin{lemma}
\label{lem:violation}
Let $m = p_1^{a_1} \dots p_K^{a_K}$ be the prime factorization of a positive integer $m$, such that $\frac{\sigma(m)}{m} \ge C(m)$. Then, for $n := q_1^{a_1} \dots q_K^{a_K}$, $\frac{\sigma(n)}{n} \ge C(n)$.
\end{lemma}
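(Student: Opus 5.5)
We argue by contraposition from Lemma~\ref{lem:substitution}. Let $m = p_1^{a_1}\cdots p_K^{a_K}$ with $p_1<\dots<p_K$ satisfy $\frac{\sigma(m)}{m} \ge C(m)$, and set $n := q_1^{a_1}\cdots q_K^{a_K}$. Then $n$ has the same exponent tuple, is supported on the first $K$ primes, and has $\omega(n)=\omega(m)=K$. Suppose, for contradiction, that $\frac{\sigma(n)}{n} < C(n)$. Then $n$ meets exactly the hypothesis of Lemma~\ref{lem:substitution}, which yields the chain
\[
\frac{\sigma(m)}{m} \le \frac{\sigma(n)}{n} < C(n) \le C(m).
\]
Hence $\frac{\sigma(m)}{m} < C(m)$, contradicting the assumption on $m$. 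Therefore $\frac{\sigma(n)}{n} \ge C(n)$.

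One step needs care: the validity of the last inequality $C(n)\le C(m)$ in the chain. When $m=n$ it is an equality and nothing is needed. When $m\neq n$, its proof goes through Lemma~\ref{lem:monotonicity}, which requires $m,n>e^e$. The reason is that $C$ involves $\ln\ln\ln$, which is only real and increasing on $(e,\infty)$ at the level of $\ln\ln$, and positivity of $\ln\ln\ln$ needs argument $>e^e$.

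I would handle this by noting that the statement is used for integers in the range of interest, above $5040$ and hence above $e^e\approx 15.15$. If $m>5040$ and $m\neq n$, then $n<m$, so we also need $n>e^e$.

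If instead $n\le e^e$, then $n\le 15$ with $\omega(n)=K$, and a direct finite check (the small cases $n\in\{2,\dots,15\}$, many of which are already known failures from the list in the introduction) settles it. Alternatively, I would simply adopt the convention, implicit in Lemma~\ref{lem:substitution}, that all integers considered exceed $e^e$.

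The main obstacle is therefore only this domain bookkeeping for $C$; the core argument is a one-line contrapositive of Lemma~\ref{lem:substitution}.
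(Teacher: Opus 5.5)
Your proposal is correct and is exactly the paper's argument: the paper gives no separate proof and presents this lemma as the contrapositive of Lemma~\ref{lem:substitution}, which is your one-line chain. Your extra check that Lemma~\ref{lem:monotonicity} needs $n>e^e$ is care the paper omits. To tighten it, note that the only integers $n\le 15$ of the form $q_1^{a_1}\cdots q_K^{a_K}$ are $2,4,6,8,12$. For $4,6,8,12$ the conclusion $\sigma(n)/n\ge C(n)$ holds directly, while $C(2)$ is not even defined since $\ln\ln 2<0$; that last case is a defect of the statement itself, not of your argument.
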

\section{\texorpdfstring{$\omega-$}{}inequality for primorial numbers and some consequences}
\label{sec:primorials}
Robin's inequality is known unconditionally for primorial numbers
greater than $5040$~\cite[Theorem~1.1]{choie2007robin}. We prove the stronger $\omega-$inequality for
the same class and derive the square-free and odd cases as consequences.
We use the following estimates of Rosser--Schoenfeld~\cite{rosser1962approximate} and Bertrand's postulate~\cite{ramanujan1919proof}, which will be used repeatedly in the rest of the paper.

\begin{lemma}[Rosser-Schoenfeld]
\label{lem:rosser-schoenfeld}
The following bounds on arithmetic functions are true:
\begin{enumerate}[label=(\alph*)]
    \item \label{rs:1} (Eq.~3.20) $\sum_{p \le x} 1/p < \ln \ln x + B + 1/ (\ln x )^2$, for $x > 1$, where $B = 0.26149...$.
    \item \label{rs:2} (Eq.~3.16) $x (1 - 1/\ln x) < \vartheta(x)$, for $x \ge 41$.
    \item \label{rs:2-1} (Eq.~3.14) $x (1 - 1/(2\ln x)) < \vartheta(x)$, for $x \ge 563$.
    \item \label{rs:3} (Eq.~3.15) $\vartheta(x) < x (1 + 1/(2 \ln x))$, for $x > 1$.
    \item \label{rs:4} (Eq.~3.13) $q_k < k ( \ln k + \ln \ln k)$, for $\NN \ni k \ge 6$.
    \item \label{rs:5} (Eq.~3.29) $\prod_{p \le x} \frac{p}{p-1} < e^\gamma \ln x \left( 1 + \frac{1}{2(\ln x)^2} \right)$, for $x \ge 286$.
    \item \label{rs:6} (Eq.~3.5) $\frac{x}{\ln x} < \pi(x)$, for $x \ge 17$.
\end{enumerate}
\end{lemma}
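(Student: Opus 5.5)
The statement collects classical estimates of Rosser and Schoenfeld, so the plan is to import them rather than re-derive them. For each item I would first match it with the corresponding numbered inequality in \cite{rosser1962approximate}, namely (3.20), (3.16), (3.14), (3.15), (3.13), (3.29) and (3.5). I would then check that the stated range of validity is contained in the range given there.

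\textbf{The $\vartheta$ and $\pi$ bounds.} For items \ref{rs:2}, \ref{rs:2-1}, \ref{rs:3}, \ref{rs:6} and \ref{rs:4}, Rosser--Schoenfeld prove these with explicit thresholds. Where our threshold is larger than theirs, the restriction is harmless. Where the original is stated with $\le$, or on a slightly different range, I would bridge the gap by a finite computation at the few boundary primes. For instance, item \ref{rs:2} is originally stated for $x\ge 41$, and item \ref{rs:4} for $k\ge 6$, which can be checked directly from $q_6=13<6(\ln 6+\ln\ln 6)\approx 13.8$.

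\textbf{The reciprocal-sum and Mertens-product bounds.} Items \ref{rs:1} and \ref{rs:5} are the Mertens-type estimates, with $B$ the Meissel--Mertens constant. Here I would simply verify that the constant $B=0.26149\ldots$ and the error terms $1/(\ln x)^2$ and $1+1/(2(\ln x)^2)$ agree with (3.20) and (3.29). For (3.29) I would also check that the lower limit $x\ge 286$ matches the original.

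\textbf{Main obstacle.} The only genuine work is bookkeeping. One must check that the inequalities are strict on the claimed ranges, including at non-integer $x$ just below a prime. This holds because $\vartheta$, $\pi$ and the prime sum are step functions while the comparison functions are continuous and monotone between primes. So strictness at the jump points, together with a brief finite check near the thresholds, suffices.
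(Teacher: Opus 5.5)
Your proposal matches the paper, which gives no argument for this lemma beyond citing the indicated equations of Rosser--Schoenfeld. Those equations are stated there on exactly these ranges (for real $x$, and for $k\ge 6$ in (e)), so no bridging computation or step-function analysis is needed. Had such an analysis been needed, note that for the lower bounds (b), (c), (g) the binding case is the left limit just before the next prime, not the jump point itself. One small numerical slip: $6(\ln 6+\ln\ln 6)\approx 14.25$, not $13.8$, though $q_6=13$ is still below it.
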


\begin{lemma}[Bertrand's postulate~\cite{ramanujan1919proof}]
\label{lem:bertrand}
If $p$ is a prime, then there exists another prime $p'$ such that $p < p' < 2p$. In particular, $q_{k+1} < 2 q_k$, for all $k \in \NN$.
\end{lemma}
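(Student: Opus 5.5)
The plan is to derive the statement directly from the Rosser--Schoenfeld estimates of Lemma~\ref{lem:rosser-schoenfeld}, treating large primes analytically and the finitely many small primes by an explicit chain, rather than reproducing Ramanujan's original argument via binomial coefficients. The key reduction is that it suffices to show $\vartheta(2p) - \vartheta(p) > 0$. This difference equals $\sum_{p < r \le 2p} \ln r$ over primes $r$, so its positivity produces a prime $p'$ with $p < p' \le 2p$. Since $2p$ is even and larger than $2$, it is not prime, and therefore $p < p' < 2p$ strictly.

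For the analytic range I will take $x = p$ with $2x \ge 563$, i.e.\ $p \ge 283$. Then part~\ref{rs:2-1} of Lemma~\ref{lem:rosser-schoenfeld} applied at $2x$ gives $\vartheta(2x) > 2x\bigl(1 - 1/(2\ln 2x)\bigr)$. Part~\ref{rs:3} applied at $x$ gives $\vartheta(x) < x\bigl(1 + 1/(2\ln x)\bigr)$. Subtracting,
\[
\vartheta(2x) - \vartheta(x) > x\Bigl(1 - \frac{1}{\ln 2x} - \frac{1}{2\ln x}\Bigr).
\]
For $x \ge 283$ we have $\ln x > 5.6$, so the bracket exceeds $1 - 0.16 - 0.09 > 0$. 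Hence the claim holds for every prime $p \ge 283$.

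For primes $p < 283$ I will use the explicit chain of primes
\[
2,\ 3,\ 5,\ 7,\ 13,\ 23,\ 43,\ 83,\ 163,\ 317,
\]
in which each term is less than twice its predecessor. Given any prime $p < 283$, let $r$ be the first term of the chain with $r > p$, and let $s$ be the term preceding $r$. Then $s \le p < r < 2s \le 2p$, which gives the required prime. This finite check is the only step needing any care; the delicate point is making sure the analytic threshold is met exactly where the chain takes over, which it is, since $317 > 283$.

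Finally, for the ``in particular'' part, apply the result to $p = q_k$. It yields a prime $p'$ with $q_k < p' < 2q_k$. Since $q_{k+1}$ is the smallest prime exceeding $q_k$, we get $q_{k+1} \le p' < 2q_k$.
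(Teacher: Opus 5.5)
Your proof is correct, but it takes a different route from the paper. The paper gives no argument of its own and simply cites Ramanujan's 1919 proof. That proof is elementary: it expands $\ln\lfloor x\rfloor! = \psi(x)+\psi(x/2)+\psi(x/3)+\cdots$, uses Stirling-type bounds on $\ln\Gamma$ to get $\vartheta(x)-\vartheta(x/2) > x/6-3\sqrt{x}$ for $x>300$, and checks small $x$ directly.

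You instead plug Lemma~\ref{lem:rosser-schoenfeld}\ref{rs:2-1} (at $2p\ge 566$) and \ref{rs:3} (at $p$) into $\vartheta(2p)-\vartheta(p)$. This gives $p\bigl(1-1/\ln(2p)-1/(2\ln p)\bigr)>0$ for $p\ge 283$ in one line. You then close the small range with Landau's chain $2,3,5,7,13,23,43,83,163,317$. The details all check out:
\begin{itemize}
\item the numerics, since $\ln 283\approx 5.65$ and $\ln 566\approx 6.34$;
\item the chain: every term is prime, each is below twice its predecessor, and $r\ne 2$ because $r>p\ge 2$;
\item the parity step upgrading $p'\le 2p$ to $p'<2p$.
\end{itemize}

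Your version is short, uses only tools the paper already imports, and makes the overlap between the analytic and finite ranges explicit. The cost is that it rests on the much deeper Rosser--Schoenfeld estimates, which depend on explicit zero-free regions and numerically verified zeta zeros, to prove a statement that has a short elementary proof. There is no circularity, since those estimates are proved independently of Bertrand's postulate. Within this paper either justification is acceptable.
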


\subsection{The primorial numbers}
\label{ssec:primorial-proof}

We begin with an auxiliary estimate for primorials.

\begin{lemma}
\label{lem:primorial-helper}
Let $K, \ell \in \NN$. Then, we have the following:
\begin{enumerate}[label={(\alph*)}]
    \item \label{ph:1} $\ln(q_K) \left( 1 -  \frac{\alpha}{(\ln(q_\ell))^2} \right) < \ln \vartheta(q_K)$, for all $K \geq \ell \geq 13$, where $\alpha = \left(\frac{\ln(q_\ell)}{\ln(q_\ell) - 1} \right)$.
    \item \label{ph:2} For all $K \in \NN$, with $B$ defined as in Lemma~\ref{lem:rosser-schoenfeld}\ref{rs:1}, it holds that
    \begin{equation*}
        \frac{\sigma(\prim{K})}{\prim{K}} < e^B \ln(q_K) \exp \left( \frac{1}{(\ln(q_K))^2} \right).
    \end{equation*}
    \item \label{ph:3} For all $K \ge 3$, we have
    \begin{equation*}
        C(\prim{K}) > e^{\gamma} \ln \vartheta(q_K) \left( 1 - \frac{(\ln \ln \vartheta(q_K))^{K+1}}{(K+1)!} \right).
    \end{equation*}
\end{enumerate}
\end{lemma}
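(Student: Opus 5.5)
We prove the three parts separately. Each reduces to a Rosser--Schoenfeld estimate from Lemma~\ref{lem:rosser-schoenfeld} plus elementary manipulation. Part~\ref{ph:3} also uses the Taylor remainder of the exponential.

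\emph{Part \ref{ph:1}.} Put $x = q_K \ge q_\ell \ge q_{13} = 41$. Then Lemma~\ref{lem:rosser-schoenfeld}\ref{rs:2} applies and gives $\vartheta(x) > x(1 - 1/\ln x)$. Taking logarithms,
\[
\ln \vartheta(q_K) > \ln q_K + \ln\Bigl(1 - \tfrac{1}{\ln q_K}\Bigr).
\]
Write $y = 1/\ln q_K \in (0, 1/\ln 41)$. The elementary bound $\ln(1-y) \ge -y/(1-y)$ gives
\[
\ln \vartheta(q_K) > \ln q_K - \frac{1}{\ln q_K - 1}
= \ln q_K \Bigl(1 - \frac{1}{\ln q_K(\ln q_K - 1)}\Bigr).
\]
It remains to show $\frac{1}{\ln q_K(\ln q_K-1)} \le \frac{\alpha}{(\ln q_\ell)^2}$, that is, $(\ln q_\ell)^2/(\ln q_\ell - 1) \cdot \frac{1}{\ln q_\ell} \ge \ldots$. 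Equivalently we need $\ln q_K(\ln q_K - 1) \ge \ln q_\ell(\ln q_\ell - 1)$. This holds because $t \mapsto t(t-1)$ is increasing for $t > 1$ and $q_K \ge q_\ell$. The inequality comes out strict, which is what the statement asks for.

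\emph{Part \ref{ph:2}.} The primorial $\prim{K}$ is square-free, so
\[
\frac{\sigma(\prim{K})}{\prim{K}} = \prod_{p \le q_K}\Bigl(1 + \tfrac1p\Bigr) < \exp\Bigl(\sum_{p \le q_K} \tfrac1p\Bigr),
\]
using $1 + t < e^t$ for $t > 0$. Lemma~\ref{lem:rosser-schoenfeld}\ref{rs:1} with $x = q_K > 1$ bounds the exponent by $\ln\ln q_K + B + 1/(\ln q_K)^2$. Exponentiating gives exactly the claimed bound, valid for every $K \ge 1$.

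\emph{Part \ref{ph:3}.} Let $L = \ln\vartheta(q_K) = \ln\ln \prim{K}$ and $x = \ln L = \lnthree(\prim{K})$. By definition $C(\prim{K}) = e^\gamma P_K(x) = e^\gamma\bigl(e^x - T_K(x)\bigr)$ and $e^x = L$. We need the strict bound $T_K(x) < L \cdot x^{K+1}/(K+1)!$. Taylor's theorem with Lagrange remainder gives $T_K(x) = e^{\xi} x^{K+1}/(K+1)!$ for some $\xi \in (0,x)$, provided $x > 0$. Then $e^{\xi} < e^x = L$, which yields the claim.

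So the only real obstacle is checking $x > 0$, i.e.\ $\ln\ln \prim{K} > 1$, i.e.\ $\prim{K} > e^e \approx 15.15$. This holds precisely when $K \ge 3$, since $\prim{3} = 30$ while $\prim{2} = 6$ fails; that explains the hypothesis. For $x > 0$ the remainder is strictly positive and the intermediate point is strictly less than $x$, so the inequality is strict.
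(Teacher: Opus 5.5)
Your proof is correct and follows the paper's argument essentially step for step. In part (a) you use Rosser--Schoenfeld (3.16) followed by an elementary lower bound on $\ln(1-y)$: you take $y=1/\ln q_K$ and then use monotonicity of $t\mapsto t(t-1)$, whereas the paper plugs $1/\ln q_\ell$ directly into its two-variable bound $\ln(1-x)+x/(1-y)>0$. Parts (b) and (c) match the paper: $1+t<e^t$ together with (3.20) for (b), and the Lagrange remainder for $e^x$ at $x=\ln\ln\vartheta(q_K)>0$ for (c). The only blemish is the garbled half-sentence just before ``Equivalently'' in part (a), which should be deleted; also, $y$ ranges over $(0,1/\ln 41]$ rather than the open interval.
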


\begin{proof}
(a) Since $K \ge 13$, we have $q_K \ge 41$, and so we have $\vartheta(q_K) > q_K (1 - 1/\ln(q_K))$, by Lemma~\ref{lem:rosser-schoenfeld}\ref{rs:2}. Taking log and using Lemma~\ref{lem:log-bounds}\ref{lg:1}, we get
\begin{equation*}
\begin{split}
\ln \vartheta(q_K) &> \ln(q_K) + \ln(1 - 1/\ln(q_K)) > \ln(q_K) - \frac{\alpha}{\ln(q_K)} \\
&= \ln(q_K) \left(1 - \frac{\alpha}{(\ln(q_K))^2} \right) > \ln(q_K) \left(1 - \frac{\alpha}{(\ln(q_\ell))^2} \right).
\end{split}
\end{equation*}

(b) From the expression of $\prim{K}$ in \eqref{eq:primorial}, the divisor ratio in \eqref{eq:sigma-n}, and the fact that $\ln(1+x) < x$, for all $x > 0$ (Lemma~\ref{lem:log-bounds}\ref{lg:2}), we have $\frac{\sigma(\prim{K})}{\prim{K}} = \prod_{k=1}^{K} \left( 1 + \frac{1}{q_k} \right) < \exp \left(\sum_{k=1}^K \frac{1}{q_k} \right)$, for all $K \in \NN$. Using Lemma~\ref{lem:rosser-schoenfeld}\ref{rs:1}, we can further bound the above expression as follows
\begin{equation*}
\frac{\sigma(\prim{K})}{\prim{K}} < \exp \left(\sum_{k=1}^K \frac{1}{q_k} \right) < e^B \ln(q_K) \exp \left( \frac{1}{(\ln(q_K))^2} \right), \quad K \in \NN.
\end{equation*}

(c) This follows using Lemma~\ref{lem:taylor-remainder}, and noting that $\ln \ln \ln (\prim{3}) > 0$:
\begin{equation*}
C(\prim{K}) = e^{\gamma} \sum_{j=0}^{K} \frac{(\ln \ln \vartheta(q_K))^j}{j!} > e^{\gamma} \ln \vartheta(q_K) \left( 1 - \frac{(\ln \ln \vartheta(q_K))^{K+1}}{(K+1)!} \right), \quad K \ge 3,
\end{equation*} 
where we have used $\ln(\prim{K}) = \vartheta(q_K)$.
\end{proof}

\begin{theorem}[Primorial numbers]
\label{thm:primorials}
$\frac{\sigma(\prim{K})}{\prim{K}} < C(\prim{K})$, for all positive integers $K \ge 4$.
\end{theorem}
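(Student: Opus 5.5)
The plan is to split on the size of $K$: small $K$ will be handled by Lemma~\ref{lem:six-primes} and a finite computation, and large $K$ by comparing the upper bound of Lemma~\ref{lem:primorial-helper}\ref{ph:2} with the lower bound of Lemma~\ref{lem:primorial-helper}\ref{ph:3}. For $4 \le K \le 6$ we have $\omega(\prim{K}) = K \le 6$. Lemma~\ref{lem:six-primes} covers those $\prim{K} > 5040$, namely $\prim{5} = 2310$ fails this but $\prim{6} = 30030$ does not. So $K=4,5$ (that is, $210$ and $2310$), and more generally every $K$ up to some threshold $K_0$, are checked directly by evaluating both sides numerically. I expect $K_0$ to be moderate, say $K_0 \approx 13$ to $20$, dictated by where the asymptotic estimates become effective. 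Since $\omega(\prim{K}) = K$ grows, we cannot rely on Lemma~\ref{lem:six-primes} beyond $K=6$, hence the finite list.

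For $K \ge K_0 \ge 13$, combine the three parts of Lemma~\ref{lem:primorial-helper}. Part \ref{ph:2} gives $\sigma(\prim{K})/\prim{K} < e^B \ln q_K \exp(1/(\ln q_K)^2)$. Parts \ref{ph:1} and \ref{ph:3} give $C(\prim{K}) > e^\gamma \ln q_K\,(1 - \alpha/(\ln q_\ell)^2)(1 - \delta_K)$, where $\delta_K = (\ln\ln\vartheta(q_K))^{K+1}/(K+1)!$. After dividing by $\ln q_K$, it suffices to show
\[
e^{B-\gamma}\exp\!\left(\frac{1}{(\ln q_K)^2}\right) < \left(1 - \frac{\alpha}{(\ln q_\ell)^2}\right)(1-\delta_K).
\]
The key numerical fact is that $e^{B-\gamma} = e^{-0.3157\ldots} \approx 0.729$, which leaves a constant margin of about $27\%$. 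The left side is decreasing in $K$ and is at most $0.729\,e^{1/(\ln q_{\ell})^2}$. With $\ell = K_0$, the factor $1-\alpha/(\ln q_\ell)^2$ is a fixed constant close to $1$; for instance $q_{13}=41$ gives $\ln 41 \approx 3.71$ and $\alpha \approx 1.37$, so the factor is about $0.90$. That is already comfortably above $0.729 \cdot e^{0.073} \approx 0.78$, provided $\delta_K$ is small.

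The main obstacle is bounding the Taylor tail $\delta_K$ uniformly. For this I would use Lemma~\ref{lem:rosser-schoenfeld}\ref{rs:3} together with \ref{rs:4} to bound $\vartheta(q_K) < 1.5\, q_K < 1.5\,K(\ln K + \ln\ln K)$. This gives $x_K := \ln\ln\vartheta(q_K) \le \ln\ln(2K\ln K)$, which grows only like $\ln\ln K$, while $(K+1)!$ grows superexponentially. Using $(K+1)! \ge ((K+1)/e)^{K+1}$, we get $\delta_K \le (e\,x_K/(K+1))^{K+1}$. The ratio $e x_K/(K+1)$ is below $1/2$ for $K \ge 13$, so $\delta_K < 2^{-14}$, which is negligible. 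Monotonicity in $K$ of the ratio should be argued briefly, or handled by noting that $x_K/(K+1)$ is decreasing because $x_K$ is concave and sublinear.

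Putting these together yields the inequality for all $K \ge K_0$. The finite range $4 \le K < K_0$ is then settled by the direct computation described in the first paragraph, which completes the proof. If the constants at $\ell = 13$ turn out tighter than expected, I would simply raise $K_0$ and extend the numerical check accordingly.
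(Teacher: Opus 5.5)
Your proposal is correct and is essentially the paper's proof: the paper also fixes $\ell = 13$, combines Lemma~\ref{lem:primorial-helper}\ref{ph:1}--\ref{ph:3} with Lemma~\ref{lem:rosser-schoenfeld}\ref{rs:3},\ref{rs:4} and $(K+1)! > ((K+1)/e)^{K+1}$ to make the tail factor exceed $0.999$, and then compares $e^\gamma \cdot 0.9 \cdot 0.999 > 1.6$ with $e^B e^{1/(\ln 41)^2} < 1.4$ (your $0.784 < 0.90$ after dividing by $e^\gamma$), checking $4 \le K \le 12$ numerically. One minor slip: from $\vartheta(q_K) < 1.5\,q_K$, the step to $x_K \le \ln\ln(2K\ln K)$ needs $(\ln K)^3 \le K$, which fails for $13 \le K \le 93$; it is repaired with no other change either by the sharper factor $1 + 1/(2\ln 41) < 1.14$ or by the weaker bound $x_K < \ln\ln(3K\ln K)$, which still gives $e\,x_K/(K+1) < 0.3$ at $K = 13$.
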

\begin{proof}
Set $\ell = 13$, and we divide the proof into two cases: (i) $K \ge \ell$, and (ii) $4 \le K < \ell$. This gives $q_\ell = 41$. We first prove case (i). Assuming $K \ge \ell$, from Lemma~\ref{lem:primorial-helper}\ref{ph:2},\ref{ph:3} it suffices  to prove \[e^B \ln(q_K) \exp \left( \frac{1}{(\ln(q_K))^2} \right) \le e^{\gamma} \ln \vartheta(q_K) \left( 1 - \frac{(\ln \ln \vartheta(q_K))^{K+1}}{(K+1)!} \right),\] or equivalently,
\begin{equation}
\label{eq:primorials-proof-1}
e^{B} \exp \left( \frac{1}{(\ln(q_K))^2} \right) \le e^{\gamma} \frac{\ln \vartheta(q_K)}{\ln(q_K)} \left( 1 - \frac{(\ln \ln \vartheta(q_K))^{K+1}}{(K+1)!} \right).
\end{equation}
By Lemma~\ref{lem:primorial-helper}\ref{ph:1} we have 
\begin{equation}
\label{eq:primorials-proof-2}
\begin{split}
\frac{\ln \vartheta(q_K)}{\ln(q_K)} &> 1 - \frac{\alpha}{(\ln(q_\ell))^2} > 0.9, \quad \alpha := \frac{\ln(q_{13})}{\ln(q_{13}) - 1}.
\end{split}
\end{equation}
Next, we bound the term $\frac{(\ln \ln \vartheta(q_K))^{K+1}}{(K+1)!}$. By Lemma~\ref{lem:rosser-schoenfeld}\ref{rs:3} and Lemma~\ref{lem:log-bounds}\ref{lg:2}, we have $\ln \vartheta(q_K) < \ln (q_K) + 1/(2 \ln(q_K)) $, and so we get
\begin{equation}
\label{eq:primorials-proof-3}
\ln \ln \vartheta(q_K) < \ln \ln (q_K) + \frac{1}{2 (\ln(q_K))^2} < \ln \ln (q_K) + \frac{1}{2 (\ln(q_\ell))^2}.
\end{equation}
Now, we may use Lemma~\ref{lem:rosser-schoenfeld}\ref{rs:4} and Lemma~\ref{lem:log-bounds}\ref{lg:2} again, to obtain
\begin{equation*}
\begin{split}
\ln(q_K) &< \ln K + \ln \ln K + \ln \left (1 + \frac{\ln \ln K}{\ln K} \right) < \ln K + \ln \ln K + \frac{\ln \ln K}{\ln K} \\
&= \ln K + \ln \ln K (1 + 1/\ln K) < \ln K + 2 \ln \ln K,
\end{split}
\end{equation*}
which gives $\ln \ln(q_K) < \ln \ln K +  \frac{2\ln \ln K}{\ln K} = \ln \ln K ( 1 + 2/\ln K) \le \ln \ln K (1 + 2 / \ln \ell)$. Using this bound in \eqref{eq:primorials-proof-3} leads to the estimate
\begin{equation*}
\ln \ln \vartheta(q_K)
<
 \ln \ln K \left( 1 + \frac{2}{\ln \ell} \right) + \frac{1}{2(\ln(q_\ell))^2} < 1.78 \ln \ln K + 0.037.
\end{equation*}
Combining this estimate with the elementary lower bound $(K+1)! > \left(\frac{K+1}{e}\right)^{K+1}$, we get
\begin{equation}
\label{eq:primorials-proof-4}
\frac{(\ln \ln \vartheta(q_K))^{K+1}}{(K+1)!} < \left( \frac{e \ln \ln \vartheta(q_K)}{K+1} \right)^{K+1} < \left( \frac{e (1.78 \ln \ln K + 0.037)}{K+1} \right)^{K+1}.
\end{equation}
Now consider the function $f(x):= \frac{e (1.78 \ln \ln x + 0.037)}{x+1}$, for $x \ge 13$. We have $f(x) > 0$, $f'(x) < 0$, for all $x \ge 13$, and $f(13) < 0.34$; thus $f(x) < 1$. Using these facts about $f$, together with \eqref{eq:primorials-proof-4}, implies
\begin{equation*}
\begin{split}
     \left( 1 - \frac{(\ln \ln \vartheta(q_K))^{K+1}}{(K+1)!} \right) &> 1 - \left( \frac{e (1.78 \ln \ln K + 0.037)}{K+1} \right)^{K+1} > 1 - \left( \frac{e (1.78 \ln \ln K + 0.037)}{K+1} \right)^{14}\\
     &\ge 1 - \left( \frac{e (1.78 \ln \ln \ell + 0.037)}{\ell+1} \right)^{14} > 0.999,
\end{split}
\end{equation*}
and then combining with \eqref{eq:primorials-proof-2} gives
\begin{equation*}
e^{\gamma} \frac{\ln \vartheta(q_K)}{\ln(q_K)} \left( 1 - \frac{(\ln \ln \vartheta(q_K))^{K+1}}{(K+1)!} \right) > e^\gamma \cdot 0.9 \cdot 0.999 > 1.6, \quad K \ge \ell.
\end{equation*}
On the other hand, we also have $e^{B} \exp \left( \frac{1}{(\ln(q_K))^2} \right) \le e^{B} \exp \left( \frac{1}{(\ln(q_\ell))^2} \right) < 1.4$, and thus, we have proved \eqref{eq:primorials-proof-1}.

It remains to prove case (ii). We verify the $\omega-$inequality directly for these primorial numbers $q_K^\#$, for $4 \le K < \ell$, in the numerical check in \ref{list:primorial_check}.
\end{proof}

\subsection{Square-free and odd integers}
\label{ssec:odd-square-free}
There are some easy corollaries of Theorem~\ref{thm:primorials}, which we list below, and they allow us to show that the $\omega-$inequality holds for both square-free integers and odd integers.

\begin{corollary}[Square-free]
\label{cor:square-free}
Let $n > 5040$ be square-free. Then $\frac{\sigma(n)}{n}  < C(n)$.
\end{corollary}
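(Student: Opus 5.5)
Let $n>5040$ be square-free with $K=\omega(n)$, so $n=p_1p_2\cdots p_K$ with $p_1<\dots<p_K$ and all exponents equal to $1$. The plan is to apply the substitution principle, Lemma~\ref{lem:substitution}, with the comparison integer $\prim{K}=q_1\cdots q_K$, which has the same exponent tuple $(1,\dots,1)$. If we know $\frac{\sigma(\prim{K})}{\prim{K}}<C(\prim{K})$, then Lemma~\ref{lem:substitution} immediately gives
\[
\frac{\sigma(n)}{n}\le \frac{\sigma(\prim{K})}{\prim{K}}<C(\prim{K})\le C(n),
\]
which is the claim. So everything reduces to checking the primorial inequality at the relevant index $K$.

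The first case is $K\le 6$. Here Lemma~\ref{lem:six-primes} already gives the result directly for $n>5040$, with no substitution needed. This case covers all the small primorials $\prim{K}\le 5040$, for which the primorial inequality might fail or for which $\lnthree$ is not yet positive.

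The second case is $K\ge 7$. Then $K\ge 4$, so Theorem~\ref{thm:primorials} gives $\frac{\sigma(\prim{K})}{\prim{K}}<C(\prim{K})$, and the chain above applies.

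The one technical point to check is the hypothesis of Lemma~\ref{lem:monotonicity}, which is used inside Lemma~\ref{lem:substitution} and requires both integers to exceed $e^e$. For $K\ge 7$ we have $\prim{K}\ge 510510>e^e$, and $n\ge\prim{K}$, so this holds. In the equality case $n=\prim{K}$ the chain is trivial. I expect no real obstacle: the only care needed is that the split between the two cases falls at a $K$ for which the primorial is already large, and $K\ge 7$ does this comfortably.
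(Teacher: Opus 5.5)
Your proof is correct and is essentially the paper's argument: you compare $n$ with the primorial $\prim{K}$ via Lemma~\ref{lem:substitution}, invoke Theorem~\ref{thm:primorials}, and handle small $K$ separately. The only difference is where you split. The paper treats $K\le 3$ by Lemma~\ref{lem:three-primes} and applies Theorem~\ref{thm:primorials} for every $K\ge 4$; this is valid because that theorem already covers $\prim{4}=210$ and $\prim{5}=2310$, both larger than $e^e$, so your caution about small primorials is not needed. Routing $K\le 6$ through Lemma~\ref{lem:six-primes} is equally valid; it simply relies on that lemma's heavier numerical check.
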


\begin{proof}
Let $n > 5040$ have the prime factorization $n=p_1 \dots p_K$. If $\omega(n) = K \le 3$, then the result follows by Lemma~\ref{lem:three-primes}, so we can assume $K > 3$. Now consider the primorial $\prim{K}$. We get by Theorem~\ref{thm:primorials}, $\frac{\sigma(\prim{K})}{\prim{K}} < C(\prim{K})$. Then the result follows immediately by Lemma~\ref{lem:substitution}.
\end{proof}

\begin{corollary}[Odd integers]
\label{cor:odd-integers}
Let $n > 5040$ be odd. Then $\frac{\sigma(n)}{n} < C(n)$.
\end{corollary}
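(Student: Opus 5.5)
The plan is to reduce to the worst odd configuration, as in the proof of Lemma~\ref{lem:three-primes}, and to exploit the factor $1/2$ gained by excluding the prime $2$. Let $n>5040$ be odd, with $n=p_1^{a_1}\cdots p_K^{a_K}$ and $K=\omega(n)$. If $K\le 6$, Lemma~\ref{lem:six-primes} already gives the claim, so I will assume $K\ge 7$.

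\emph{Upper bound for the divisor ratio.} Since $3\le p_1<\dots<p_K$, we have $p_k\ge q_{k+1}$ for each $k$. Hence, as in \eqref{eq:three-primes-proof-1},
\[
\frac{\sigma(n)}{n}<\prod_{k=1}^{K}\frac{p_k}{p_k-1}\le\prod_{k=2}^{K+1}\frac{q_k}{q_k-1}=\frac12\, r_{K+1}.
\]

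\emph{Lower bound for $C(n)$.} Likewise $n\ge q_2q_3\cdots q_{K+1}=\prim{K+1}/2$, and $\omega(\prim{K+1}/2)=K$. By Lemma~\ref{lem:monotonicity} (or directly, since $C$ depends on $n$ only through $\lnthree n$ once $\omega$ is fixed), $C(n)\ge C(\prim{K+1}/2)$. Writing $L:=\ln(\vartheta(q_{K+1})-\ln 2)=\ln\ln(\prim{K+1}/2)$, the Taylor remainder argument of Lemma~\ref{lem:primorial-helper}\ref{ph:3} with $K$ terms gives
\[
C(n)>e^\gamma L\left(1-\frac{(\ln L)^{K+1}}{(K+1)!}\right).
\]
Note that the remainder has exactly the same shape as in Theorem~\ref{thm:primorials}, up to the harmless replacement $\vartheta\to\vartheta-\ln 2$.

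\emph{Comparison for large $K$.} For $q_{K+1}\ge 286$, Lemma~\ref{lem:rosser-schoenfeld}\ref{rs:5} gives
\[
\frac12 r_{K+1}<\frac12 e^\gamma\ln q_{K+1}\left(1+\frac{1}{2(\ln q_{K+1})^2}\right).
\]
It therefore suffices to show
\[
\frac12\left(1+\frac{1}{2(\ln q_{K+1})^2}\right)<\frac{L}{\ln q_{K+1}}\left(1-\frac{(\ln L)^{K+1}}{(K+1)!}\right).
\]
Lemma~\ref{lem:rosser-schoenfeld}\ref{rs:2} together with $\ln(1-x)>-x\alpha$-type bounds (as in Lemma~\ref{lem:primorial-helper}\ref{ph:1}, absorbing the $-\ln 2$ term) gives $L/\ln q_{K+1}>0.9$. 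The factorial tail is bounded by $<0.001$ exactly as in \eqref{eq:primorials-proof-4}, since $L\le\ln\vartheta(q_{K+1})$. The right side then exceeds about $0.89$, while the left side is below $0.51$. There is a lot of slack here, because the factor $1/2$ does essentially all the work.

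\emph{Finite range.} The main (though routine) obstacle is the intermediate range $7\le K$ with $q_{K+1}<286$, i.e.\ $K+1\le 61$. There the analytic estimates are not directly available, or the constants in the bounds above are not yet good enough. For these finitely many $K$, I would check numerically that
\[
\tfrac12 r_{K+1}<e^\gamma P_K\!\left(\lnthree(\prim{K+1}/2)\right),
\]
in the same spirit as the checks cited in Lemma~\ref{lem:six-primes} and Theorem~\ref{thm:primorials}. Combining this check with the three steps above completes the proof for all odd $n>5040$.
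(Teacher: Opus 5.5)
Your proof is correct, but it takes a different route from the paper's. The paper does no new analysis. It handles $K\le 3$ by Lemma~\ref{lem:three-primes}. For $K\ge 4$ it writes $\frac{p_k}{p_k-1}=1+\frac{1}{p_k-1}$ and uses oddness in the form $p_k-1\ge q_k$, so that
\[
\frac{\sigma(n)}{n}<\prod_{k=1}^{K}\Bigl(1+\frac{1}{q_k}\Bigr)=\frac{\sigma(\prim{K})}{\prim{K}}<C(\prim{K})<C(n).
\]
This follows from Theorem~\ref{thm:primorials} and Lemma~\ref{lem:monotonicity}, since $\prim{K}<n$ and $\omega(\prim{K})=K$. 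The odd case is therefore a short corollary of the primorial theorem, with no new estimates and no new computation.

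You instead compare against the shifted primorial $\prim{K+1}/2$ and the shifted product $\frac12 r_{K+1}$. This bound is actually the sharper one: it is asymptotically $\frac12 e^\gamma \ln q_{K+1}$, whereas $\prod(1+1/q_k)$ is roughly $\frac{6}{\pi^2}e^\gamma\ln q_K$. It also makes explicit that odd $n$ clear the $\omega$-inequality with roughly a factor-two margin.

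The cost is threefold:
\begin{itemize}
\item You must redo the Rosser--Schoenfeld and Taylor-tail argument. The index is shifted by one relative to \eqref{eq:primorials-proof-4}, so the auxiliary function there should use $\ln\ln(K+1)$ in place of $\ln\ln K$; this is harmless.
\item You need a new finite verification for $7\le K\le 60$, which is not among the paper's listed checks.
\item You rely on Lemma~\ref{lem:six-primes}, with its search up to $1.5\times 10^8$, rather than only Lemma~\ref{lem:three-primes}.
\end{itemize}

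Your finite check does pass comfortably. At $K=7$ the left side is about $2.92$ and the right side about $4.87$, and the margin only improves as $K$ grows. So your proof is complete once that check is run. The paper's observation $p_k-1\ge q_k$ would let you skip both the new analysis and the new computation.
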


\begin{proof}
Let $\omega(n) = K$. If $K \le 3$, the result is true by Lemma~\ref{lem:three-primes}, so we may assume $K > 3$. Let $n$ have the prime factorization $n = p_1^{a_1} \dots p_K^{a_K}$. Now by \eqref{eq:sigma-n}, we have
\begin{equation}
\label{eq:odd-integers-proof-1}
\frac{\sigma(n)}{n} = \prod_{k=1}^{K} \left( 1 + \frac{1}{p_k} + \dots + \frac{1}{p_k^{a_k}} \right) < \prod_{k=1}^{K} \frac{p_k}{p_k - 1} = \prod_{k=1}^{K} \left( 1 + \frac{1}{p_k - 1} \right).
\end{equation}
Notice that $p_k - 1 \ge q_k$ for every $k = 1,\dots,K$ (since $n$ is odd, we have $p_1 \ge 3$), and so we get from \eqref{eq:odd-integers-proof-1} the upper bound 
\begin{equation*}
\frac{\sigma(n)}{n} < \prod_{k=1}^{K} \left( 1 + \frac{1}{q_k} \right) = \frac{\sigma(\prim{K})}{\prim{K}} < C(\prim{K}),
\end{equation*}
where the last inequality is by Theorem~\ref{thm:primorials}. Finally, notice that $\omega(\prim{K}) = \omega(n) = K$, and by construction $\prim{K} < n$, and so by Lemma~\ref{lem:monotonicity}, we have $C(\prim{K}) < C(n)$, finishing the proof.
\end{proof}
\section{Minimal counterexamples to the \texorpdfstring{$\pi-$ and $\omega-$}{}inequalities}
\label{sec:minimal-counterex}

We say that a positive integer $n > 5040$ is the \textit{minimal counterexample} to the $\pi-$inequality (resp. $\omega-$inequality) if $n$ is the first integer greater than 5040 for which the $\pi-$inequality (resp. $\omega-$inequality) is violated. In \cite{akbary2009superabundant} Akbary and Friggstad proved that the minimal counterexample $n > 5040$ to Robin's inequality must be a superabundant number. The goal of this section is to prove the same for the $\pi-$ and $\omega-$inequalities.

\subsection{\texorpdfstring{Superabundance for the $\pi-$inequality}{}}
\label{ssec:minimal-counterex-pi}
The argument of Akbary and Friggstad~\cite[Theorem~3]{akbary2009superabundant} extends to the
$\pi-$inequality once the finite range $5041\leq n\leq10080$
is verified.

\begin{lemma}[$\pi-$superabundance]
\label{lem:pi-extremality}
Let $n$ be the minimal counterexample to the $\pi-$inequality. Then  $n$ is a superabundant number. 
\end{lemma}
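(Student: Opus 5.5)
The plan is to adapt the argument of Akbary and Friggstad~\cite[Theorem~3]{akbary2009superabundant}. Write $D(n) := e^\gamma P_{\pi(n)}(\ln\ln\ln n)$ for the right-hand side of the $\pi-$inequality. The only structural input is that $D$ is strictly increasing on integers $n > e^e$. This holds because $\ln\ln\ln n$ is positive and strictly increasing there, $\pi(n)$ is non-decreasing, and $P_N(x)$ is strictly increasing in $x \ge 0$ and non-decreasing in $N$ for $x \ge 0$. This is the analogue of Lemma~\ref{lem:monotonicity}, and it is easier, since $\pi$ is monotone in $n$ while $\omega$ is not.

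First, I would dispose of the finite range. By direct computation, in the style of the numerical checks in the appendix, the $\pi-$inequality holds for every $5041 \le n \le 10080$. So the minimal counterexample $n$, if it exists, satisfies $n > 10080$.

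Next, suppose for contradiction that $n$ is not superabundant. Then there is some $m < n$ with $\sigma(m)/m \ge \sigma(n)/n$. I split into two cases.

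\emph{Case $m > 5040$.} Then
\[
\frac{\sigma(m)}{m} \ge \frac{\sigma(n)}{n} \ge D(n) > D(m),
\]
so $m$ is a counterexample with $5040 < m < n$. This contradicts the minimality of $n$.

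\emph{Case $m \le 5040$.} Since $5040$ is superabundant, $\sigma(m)/m \le \sigma(5040)/5040 = 19344/5040 \approx 3.838$. On the other hand, $n > 10080$ gives $D(n) > D(10080)$. Here $\pi(10080) = 1236$ and $x = \ln\ln\ln 10080 \approx 0.798$, so $P_{1236}(x)$ agrees with $e^x = \ln\ln 10080$ to within a negligible tail. Hence
\[
D(10080) \approx e^\gamma \ln\ln 10080 \approx 1.781 \cdot 2.221 \approx 3.956 > 3.838.
\]
To make this rigorous, I would bound the tail $T_{1236}(x)$ crudely, for example by $x^{1237}/1237!$ times a geometric factor, or by invoking the paper's Taylor-remainder lemma. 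This yields $\sigma(n)/n \le \sigma(m)/m < D(n)$, contradicting that $n$ is a counterexample. In both cases we reach a contradiction, so $n$ is superabundant.

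The main obstacle is the threshold step in the second case. Note that $e^\gamma \ln\ln 5041 \approx 3.818$ lies below $\sigma(5040)/5040$, which is exactly why the direct verification on $[5041,10080]$ is needed. The remaining work is to confirm that the truncation at $\pi(n)$ loses essentially nothing at $n = 10080$, so that $D(10080)$ really exceeds $3.838$. Given the enormous truncation order, I expect a one-line factorial tail estimate to suffice.
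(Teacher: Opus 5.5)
Your argument is correct and follows the same Akbary--Friggstad template as the paper. Both use a finite check on $[5041,10080]$ and then split according to whether the witness $m$ with $\sigma(m)/m\ge\sigma(n)/n$ exceeds $5040$. The only difference is the case $m\le 5040$. The paper uses that $10080$ is itself superabundant and lies in the verified range, so $10080$ would be a smaller counterexample; this avoids any numerical evaluation of the right-hand side. You instead compare $\sigma(5040)/5040\approx 3.838$ directly with $e^\gamma P_{\pi(10080)}(\ln\ln\ln 10080)\approx 3.956$. That costs you the trivial tail estimate, but you no longer need the fact that $10080$ is superabundant.
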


\begin{proof}
A numerical check verifies the $\pi-$inequality in the range $5041 \le n \le 10080$ (see \ref{list:check-pi-n-1b}). Now, let $n>10080$ be the minimal counterexample to the $\pi-$inequality and suppose $n$ is not superabundant, which respectively implies $\sigma(n)/n \ge  e^{\gamma} \sum_{j=0}^{\pi(n)} \frac{(\ln \ln \ln n)^j}{j!}$, and the existence of $n'< n$ such that $\sigma(n')/n'\ge \sigma(n)/n$. Since $\pi$ is a monotonically non-decreasing function over the positive integers, this furthermore implies that 
\begin{equation}
\label{eq:pi-extremality-1}
    \frac{\sigma(n')}{n'} \ge \frac{\sigma(n)}{n} \ge  e^{\gamma} \sum_{j=0}^{\pi(n)} \frac{(\ln \ln \ln n)^j}{j!} \ge e^{\gamma} \sum_{j=0}^{\pi(n')} \frac{(\ln \ln \ln n')^j}{j!},
\end{equation}
and therefore $n'$ is a smaller counterexample. If $n'>5040$ then we have a contradiction to the minimality of $n$, so we may suppose  $n' \le 5040$. Set $n'' = 10080$, which is a superabundant number, and so we have $\sigma(n'')/n'' > \sigma(n')/n'$. Combined with \eqref{eq:pi-extremality-1} and again using the monotonicity of the $\pi$ function, we have $\sigma(n'')/n''  > e^{\gamma} \sum_{j=0}^{\pi(n)} \frac{(\ln \ln \ln n)^j}{j!} \ge e^{\gamma} \sum_{j=0}^{\pi(n'')} \frac{(\ln \ln \ln n'')^j}{j!}$. Thus $n''$ is a smaller counterexample than $n$, and again gives a contradiction.
\end{proof}

\subsection{\texorpdfstring{Properties of the minimal counterexample to $\omega-$inequality}{}}
\label{ssec:minimal-counterex-omega-prop}
We next derive structural properties of a minimal counterexample to the
$\omega-$inequality, culminating in its superabundance in
Section~\ref{ssec:minimal-counterex-omega}. These properties closely parallel the classical
structure of superabundant numbers. We begin with the following
extremality property.

\begin{lemma}[$\omega-$extremality]
\label{lem:omega-extremality}
Let $n$ be the minimal counterexample to the $\omega-$inequality. Then  $\sigma(k)/k < \sigma(n)/n$, for all $e^e<k < n$, with $\omega(k) \le \omega(n)$. 
\end{lemma}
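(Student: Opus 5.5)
We argue by contradiction. Suppose $n$ is the minimal counterexample to the $\omega-$inequality, so $n>5040$ and $\sigma(n)/n\ge C(n)$. Suppose also that there is some $k$ with $e^e<k<n$, $\omega(k)\le\omega(n)$ and $\sigma(k)/k\ge\sigma(n)/n$. The argument splits into two cases according to whether $k>5040$ or $e^e<k\le 5040$, following the pattern of the proof of Lemma~\ref{lem:pi-extremality}.

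\emph{Case $k>5040$.} Both $k$ and $n$ exceed $e^e$, with $k<n$ and $\omega(k)\le\omega(n)$. Lemma~\ref{lem:monotonicity} therefore gives $C(k)<C(n)$, and so
\begin{equation*}
\frac{\sigma(k)}{k}\ \ge\ \frac{\sigma(n)}{n}\ \ge\ C(n)\ >\ C(k).
\end{equation*}
Thus $k$ is a counterexample with $5040<k<n$, contradicting the minimality of $n$.

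\emph{Case $e^e<k\le 5040$.} Here we cannot appeal to minimality, so we compare against absolute constants instead.

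\begin{enumerate}[label=(\arabic*)]
\item Since $5040$ is superabundant, every $k\le 5040$ satisfies
\begin{equation*}
\frac{\sigma(k)}{k}\le\frac{\sigma(5040)}{5040}=\frac{19344}{5040}<3.84 .
\end{equation*}
\item Lemma~\ref{lem:six-primes} shows that any counterexample $n>5040$ has $\omega(n)\ge 7$. Hence $n\ge\prim{7}=510510$, which gives $\lnthree n\ge\lnthree(510510)>0.94$.
\item Since $P_N$ is increasing in both $N$ and $x\ge0$, we get
\begin{equation*}
C(n)\ \ge\ e^\gamma P_7(0.94)\ \approx\ 1.781\cdot 2.56\ >\ 4.5 .
\end{equation*}
\end{enumerate}
Combining these bounds gives
\begin{equation*}
\frac{\sigma(k)}{k}<3.84<C(n)\le\frac{\sigma(n)}{n}\le\frac{\sigma(k)}{k},
\end{equation*}
which is a contradiction.

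The only delicate point is the second case. A naive bound using only $\lnthree n>\lnthree 5040\approx0.762$ gives $C(n)$ roughly $3.82$, which falls just short of $\sigma(5040)/5040\approx3.838$. It is essential to use Lemma~\ref{lem:six-primes} to force $\omega(n)\ge7$, and hence $n\ge\prim{7}$, which raises the lower bound on $C(n)$ comfortably above $3.84$. The value $e^\gamma P_7(0.94)$ should be confirmed by a short numerical check, in the same style as the paper's other numerical verifications.
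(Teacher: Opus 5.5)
Your proof is correct, and your case $k>5040$ is exactly the paper's argument; the two proofs differ only in the case $e^e<k\le 5040$. There the paper does not bound $C(n)$ numerically. It first notes $n>10080$ from the finite check, then uses the superabundant number $m=10080$ as an intermediate witness: $\sigma(m)/m>\sigma(k)/k\ge\sigma(n)/n\ge C(n)>C(m)$, where the last step is Lemma~\ref{lem:monotonicity} with $\omega(m)=4\le\omega(n)$. Thus $m$ is a smaller counterexample, contradicting minimality. This keeps the argument purely structural, in the same Akbary--Friggstad style as Lemma~\ref{lem:pi-extremality}, and never evaluates $C$. You instead compare against absolute constants: $\omega(n)\ge 7$ forces $n\ge\prim{7}=510510$, hence $C(n)\ge e^\gamma P_7(0.94)\approx 4.56$, while $\sigma(k)/k\le\sigma(5040)/5040\approx 3.838$ (your figures check out). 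Your route costs one explicit numerical evaluation. In return it gives a cleaner, reusable fact that does not depend on minimality: every counterexample $n>5040$ has $\sigma(n)/n>4.5$, so it beats every $k\le 5040$ outright. You are also right that the bound $\ln\ln\ln n>\ln\ln\ln 5040$ alone would fall short. This shows why the input $\omega(n)\ge 7$ from Lemma~\ref{lem:six-primes} is essential, and the paper uses that same input to justify $C(n)>C(m)$.
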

\begin{proof}
A numerical check verifies the $\omega-$inequality in the range $5041 \le n \le 10080$ (see \ref{list:check-omega-n-1b}). Let $n > 10080$ be the minimal counterexample. For contradiction, assume there exists $e^e < k<n$, with $\omega(k) \le \omega(n)$, for which $\sigma(k)/k \ge \sigma(n)/n$. Since $\omega(k) \le \omega(n)$ and $k<n$, $C(n)>C(k)$ by Lemma~\ref{lem:monotonicity}, leading to the chain of inequalities
\[
\frac{\sigma(k)}{k} \ge \frac{\sigma(n)}{n}\ge C(n)>C(k).
\]
Now, there are two cases: (i) $k > 5040$, and (ii) $k \le 5040$. For case (i), we have a contradiction, since $n$ is minimal. For case (ii), we can simply take $m=10080$, and since 10080 is superabundant, we have $\frac{\sigma(m)}{m}>\frac{\sigma(k)}{k}$. We also have $C(n)> C(m)$ since $n>m$, $\omega(m)=4$, and $\omega(n)\ge 7$ by Lemma~\ref{lem:six-primes}. The above chain is then replaced by
\[
\frac{\sigma(m)}{m} > \frac{\sigma(n)}{n}\ge C(n)>C(m),
\]
which is again a contradiction to minimality of $n$.
\end{proof}
The following two lemmas establish that the minimal counterexample to the $\omega-$inequality has the first two properties in common with superabundant numbers, as mentioned before in Section~\ref{ssec:background}.

\begin{lemma}[Ungapped prime support]
\label{lem:min-counterex-prime-supp}
Let $n$ be the minimal counterexample to the $\omega-$ inequality, and let $K = \omega(n)$. Then the prime support of $n$ contains all the first $K$ prime numbers, that is, $\rad(n) = \prim{K}$. 
\end{lemma}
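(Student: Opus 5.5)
Suppose, for contradiction, that the minimal counterexample $n>5040$ has $\rad(n)\neq\prim{K}$, where $K=\omega(n)$. Write its prime factorization as $n=p_1^{a_1}\dots p_K^{a_K}$ with $p_1<\dots<p_K$. I would then transplant the exponent tuple onto the first $K$ primes by setting
\[
n' := q_1^{a_1}\dots q_K^{a_K},
\]
and argue that $n'$ is a strictly smaller counterexample exceeding $5040$. This contradicts minimality.

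The key steps are as follows.

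\emph{First, $n'<n$.} Since $p_k$ is the $k$-th smallest prime dividing $n$, we have $p_k\ge q_k$ for every $k$. Because $\rad(n)\ne\prim{K}$, some index $j$ has $p_j>q_j$; otherwise all $p_k=q_k$ and $\rad(n)=\prim{K}$. Hence $n'<n$, exactly as in the proof of Lemma~\ref{lem:substitution}.

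\emph{Second, $n'$ violates the $\omega-$inequality.} This is Lemma~\ref{lem:violation} applied to $m=n$: from $\sigma(n)/n\ge C(n)$ it gives $\sigma(n')/n'\ge C(n')$. Tracing the chain in Lemma~\ref{lem:substitution}, one even gets the strict form
\[
\frac{\sigma(n')}{n'}>\frac{\sigma(n)}{n}\ge C(n)>C(n').
\]
The middle step uses $\omega(n')=\omega(n)$ and Lemma~\ref{lem:monotonicity}, which requires $n'>e^e$.

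\emph{Third, $n'>5040$.} This is the only point needing care, and it is easily handled. By Lemma~\ref{lem:six-primes}, a counterexample exceeding $5040$ must have $K=\omega(n)\ge 7$. Then
\[
n'\ge \prim{7}=510510>5040,
\]
so $n'$ lies in the range where minimality applies, and also $n'>e^e$ as needed for Lemma~\ref{lem:monotonicity}.

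Thus $n'$ is a counterexample with $5040<n'<n$, contradicting the minimality of $n$. Therefore $p_k=q_k$ for all $k$, that is, $\rad(n)=\prim{K}$. Equivalently, one can conclude via Lemma~\ref{lem:omega-extremality}: the inequality $\sigma(n')/n'>\sigma(n)/n$ with $e^e<n'<n$ and $\omega(n')\le\omega(n)$ directly contradicts that lemma.

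I expect no serious obstacle. The only subtlety is ensuring that the transplanted integer $n'$ stays above $5040$, so that it is a genuine competitor, and that it exceeds $e^e$, so that Lemma~\ref{lem:monotonicity} applies. Both are secured by the bound $\omega(n)\ge7$ from Lemma~\ref{lem:six-primes}.
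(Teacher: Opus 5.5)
Your proof is correct, and it takes a slightly different and cleaner route than the paper. The paper swaps only the first mismatched prime, $n' = n q_j^{a_j}/p_j^{a_j}$. To guarantee $j>1$, it first reduces to even $n$ via Corollary~\ref{cor:odd-integers}. It then splits on whether $n'>5040$, and when $n'\le 5040$ it multiplies by a power of $2$ and appeals to the numerical check up to $10^9$.

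You instead transplant the entire exponent tuple onto the first $K$ primes, which is exactly Lemma~\ref{lem:violation} (with the strict chain from Lemma~\ref{lem:substitution}). You then observe that $\omega(n')=K\ge 7$ already forces $n'\ge\prim{7}=510510>5040$. The same observation applies verbatim to the paper's $n'$, which also has $\omega(n')=K\ge 7$. So the paper's second case, the evenness reduction and the use of Corollary~\ref{cor:odd-integers} are in fact vacuous.

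Your argument needs only Lemma~\ref{lem:six-primes}, Lemma~\ref{lem:monotonicity} and minimality, and you correctly check the $n'>e^e$ hypothesis. The paper's one-prime swap gains nothing here beyond matching the local-exchange style of its later lemmas (e.g.\ Lemma~\ref{lem:min-counterex-exponents-nondecreasing}).
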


\begin{proof}
Let $n > 5040$ be the minimal counterexample, and $K = \omega(n)$. We already know that the $\omega-$inequality holds when $K \le 6$, by Lemma~\ref{lem:six-primes}, and that it is also true when $n$ is odd, by Corollary~\ref{cor:odd-integers}. So we may assume that $K > 6$, and that $n$ is an even positive integer throughout this proof.

Let $n$ have the prime factorization $n = p_1^{a_1} \dots p_K^{a_K}$, and suppose for contradiction that $\rad(n) \neq \prim{K}$. Then there exists an integer $1 < j \le K$ such that $p_\ell = q_\ell$, for all $1 \le \ell < j$, and $p_j \ne q_j$ (the inequality $j > 1$ uses the fact that $n$ is even). Note that this implies $p_j > q_j$. Now construct the integer $n' := n q_j^{a_j} / p_j^{a_j}$, and since $\omega(n') = K$ and $n' < n$, this implies $C(n') < C(n)$, by Lemma~\ref{lem:monotonicity}.

Next, notice that by using \eqref{eq:sigma-n}, we have
\begin{equation*}
\frac{\sigma(n)/n}{ \sigma(n')/n'} = \frac{1+p_j^{-1}+p_j^{-2}+\dots+p_j^{-a_j}}{1+q_j^{-1}+q_j^{-2}+\dots+q_j^{-a_j}} < 1,
\end{equation*}
and thus, combining with above, we have
\begin{equation}
\label{eq:min-counterex-prime-supp-proof-1}
\frac{\sigma(n')}{n'} > \frac{\sigma(n)}{n} \ge C(n) > C(n'),
\end{equation}
where the second inequality is by the assumption that $n$ is the minimal counterexample. Now there are two cases possible: (i) $n' > 5040$, and (ii) $n' \le 5040$. In the first case, \eqref{eq:min-counterex-prime-supp-proof-1} gives us a smaller counterexample $n'$ to the $\omega-$inequality, which contradicts the minimality assumption of $n$. 

Finally, consider the second case. Let us construct a new integer $n'':=2^r n' > n'$, where $r:=\text{min}\{r \ge 1~|~2^r n'  > 5040\}$. Then $n''>5040$ by construction. The point of multiplying $n'$ by a power of two is to exploit the known property that $n$ (and hence $n'$) is even, which also ensures $\omega(n'')=K$. Now, there can be two sub-cases: (ii.a) $n'' > n$, and (ii.b) $n'' < n$. For case (ii.a), the definition of $n''$ implies $n'' \le 2 \cdot 5040 = 10080$, which in turn implies $5040 < n < 10080$. But it is known from numerical checks that the $\omega-$inequality holds for all $n \le 10^9$ (see \ref{list:check-omega-n-1b}), thus leading to a contradiction. For case (ii.b), we have
\begin{equation*}
\frac{\sigma(n')/n'}{ \sigma(n'')/n''} = \frac{1+2^{-1}+2^{-2}+\dots+2^{-a_1}}{1+2^{-1}+2^{-2}+\dots+2^{-(a_1 + r)}} < 1,
\end{equation*}
and therefore, combining with \eqref{eq:min-counterex-prime-supp-proof-1}, and Lemma~\ref{lem:monotonicity}, we have $\frac{\sigma(n'')}{n''} > \frac{\sigma(n)}{n} \ge C(n) > C(n'')$, which yields another contradiction to the minimality of $n$, and finishes the proof.
\end{proof}

\begin{lemma}[Falling exponents]
\label{lem:min-counterex-exponents-nondecreasing}
Let $n$ be the minimal counterexample to the $\omega-$inequality, with prime factorization $n = p_1^{a_1} \dots p_K^{a_K}$, and let $K = \omega(n)$. Then $a_1 \ge a_2 \ge \dots \ge a_K$.
\end{lemma}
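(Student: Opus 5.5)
By Lemma~\ref{lem:min-counterex-prime-supp} we know $p_k = q_k$ for every $k$, so $n = q_1^{a_1}\cdots q_K^{a_K}$ with $K \ge 7$ by Lemma~\ref{lem:six-primes}. I will argue by contradiction with the classical exchange argument for superabundant numbers (Alaoglu--Erd\H{o}s). Suppose there is an index $i < j$ with $a_i < a_j$. Define $n'$ by swapping these two exponents:
\[
n' := n \, \frac{q_i^{a_j}\, q_j^{a_i}}{q_i^{a_i}\, q_j^{a_j}} = n \left(\frac{q_i}{q_j}\right)^{a_j - a_i}.
\]

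First I compare the sizes and the omega values. Since $q_i < q_j$ and $a_j - a_i \ge 1$, we get $n' < n$. The prime support is unchanged, so $\omega(n') = \omega(n) = K$.

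Next I compare the divisor ratios. Write $s_a(p) := 1 + p^{-1} + \dots + p^{-a}$, so that
\[
\frac{\sigma(n')/n'}{\sigma(n)/n} = \frac{s_{a_j}(q_i)\, s_{a_i}(q_j)}{s_{a_i}(q_i)\, s_{a_j}(q_j)}.
\]
This ratio exceeds $1$ if and only if
\[
s_{a_j}(q_i) - s_{a_i}(q_i) = \sum_{t=a_i+1}^{a_j} q_i^{-t}
\]
is at least $s_{a_i}(q_i)/s_{a_i}(q_j)$ times the corresponding sum $\sum_{t=a_i+1}^{a_j} q_j^{-t}$. A cleaner way to organise this: the quantity equals $[s_{a_j}(q_i)/s_{a_i}(q_i)] / [s_{a_j}(q_j)/s_{a_i}(q_j)]$, and
\[
\frac{s_{b}(p)}{s_{a}(p)} = \frac{1 - p^{-(b+1)}}{1 - p^{-(a+1)}}, \qquad b > a.
\]
I need this to be strictly decreasing in $p$, i.e.\ the function $x \mapsto (1 - x^{b+1})/(1 - x^{a+1})$ should be increasing on $(0,1)$. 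That holds because
\[
\frac{1 - x^{b+1}}{1 - x^{a+1}} = \frac{1 + x + \dots + x^{b}}{1 + x + \dots + x^{a}} = 1 + \frac{x^{a+1} + \dots + x^{b}}{1 + \dots + x^{a}},
\]
and in the last fraction the numerator increases faster than the denominator. More precisely, each monomial $x^{a+1+s}/x^{u}$ with $u \le a$ is increasing in $x$. Hence $\sigma(n')/n' > \sigma(n)/n$.

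Now I combine these facts. Lemma~\ref{lem:monotonicity} gives $C(n') < C(n)$, so
\[
\frac{\sigma(n')}{n'} > \frac{\sigma(n)}{n} \ge C(n) > C(n').
\]
If $n' > 5040$, this contradicts the minimality of $n$. If $n' \le 5040$, I proceed exactly as in case (ii) of the proof of Lemma~\ref{lem:min-counterex-prime-supp}. Set $n'' = 2^r n'$ with $r$ minimal so that $n'' > 5040$. Either $n'' > n$, which forces $n \le 10080$ and contradicts the numerical check, or $n'' < n$. In the latter case $\sigma(n'')/n'' > \sigma(n')/n'$, $\omega(n'') = K$, and Lemma~\ref{lem:monotonicity} gives the contradiction. (In fact $n' \ge \prim{K} \ge \prim{7} = 510510 > 5040$, so this case cannot even arise; I would simply note that.)

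The main obstacle is the monotonicity of $s_b(p)/s_a(p)$ in $p$. It is elementary but must be stated carefully so that the inequality is strict; the rest is a direct transcription of the previous lemma's argument.
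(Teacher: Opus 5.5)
Your proof is correct. It follows the same exchange strategy as the paper but uses a different exchange, so a brief comparison is worthwhile.

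The paper takes an adjacent descent $a_\ell<a_{\ell+1}$ and moves a single unit of exponent, $n'=nq_\ell/q_{\ell+1}$. After cancelling the common terms, the divisor-ratio comparison reduces to $q_\ell+q_\ell^2+\dots+q_\ell^{a_\ell+1}<q_{\ell+1}+\dots+q_{\ell+1}^{a_{\ell+1}}$, which is immediate. The paper then invokes Bertrand's postulate to get $n'>n/2>5040$.

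You swap the two exponents wholesale. This works for any pair $i<j$, not just adjacent ones, but it requires the strict monotonicity of $s_b(p)/s_a(p)$ in $p$, which is a slightly heavier step. On the other hand, you handle the $5040$ threshold more cleanly via $n'\ge \rad(n')=q_K^\#\ge q_7^\#>5040$. That observation would also have made Bertrand unnecessary in the paper's argument, and it makes your $2^r n'$ fallback superfluous, as you noted.

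One step needs tightening. The fraction $\frac{x^{a+1}+\dots+x^b}{1+\dots+x^a}$ is not a sum of the monomials $x^{a+1+s}/x^u$, so that sentence does not literally prove monotonicity. Either of the following gives a clean justification:
\begin{itemize}
\item For polynomials $N,D$ with positive coefficients in which every exponent of $N$ exceeds every exponent of $D$, one has $x(N'D-ND')=\sum_{s,u}(s-u)c_sd_u x^{s+u}>0$.
\item Alternatively, $D(x)/x^{a+1}=x^{-1}+\dots+x^{-(a+1)}$ is strictly decreasing while $N(x)/x^{a+1}=1+x+\dots+x^{b-a-1}$ is positive and nondecreasing, so the quotient $N/D$ is strictly increasing on $(0,1)$.
\end{itemize}
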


\begin{proof}
By Lemma~\ref{lem:min-counterex-prime-supp}, we may assume $p_j = q_j$, for all $1 \le j \le K$. Also, we can assume $K > 6$ by Lemma~\ref{lem:six-primes}, and $n > 10080$ by the numerical check in \ref{list:check-omega-n-1b}. Now for contradiction, assume that there exists $1 \le \ell < K$ such that $a_\ell < a_{\ell+1}$, which also implies $a_{\ell+1} \ge 2$. Construct the integer $n' := n q_{\ell} / q_{\ell + 1}$, and note that it satisfies $n' < n$ and $\omega(n') = K$. By Lemma~\ref{lem:monotonicity}, this implies $C(n) > C(n')$. Next, we observe that the ratio
\begin{equation*}
\frac{\sigma(n)/n}{\sigma(n')/n'} = \frac{\left( 1+q_\ell^{-1}+q_\ell^{-2}+\dots+q_\ell^{-a_\ell} \right) \left( 1+q_{\ell+1}^{-1}+q_{\ell+1}^{-2}+\dots+q_{\ell+1}^{-a_{\ell+1}} \right)}{\left( 1+q_\ell^{-1}+q_\ell^{-2}+\dots+q_\ell^{-(a_\ell + 1)} \right) \left( 1+q_{\ell+1}^{-1}+q_{\ell+1}^{-2}+\dots+q_{\ell+1}^{-(a_{\ell+1} - 1)} \right)} < 1,
\end{equation*}
by simply expanding the numerator and denominator, and using the fact that $q_\ell < q_{\ell+1}$, and the assumption $a_{\ell} < a_{\ell + 1}$. Combining the above, we have obtained $\frac{\sigma(n')}{n'} > \frac{\sigma(n)}{n} \ge C(n) > C(n')$. Finally notice that $q_{\ell+1} < 2 q_\ell$ by Lemma~\ref{lem:bertrand} (Bertrand's postulate), which implies $n' > n/2 > 5040$, and thus we have arrived at a contradiction to the minimality of $n$.
\end{proof}
Henceforth, for the rest of this section, we write a minimal counterexample as
\[
    n=q_1^{a_1}\cdots q_K^{a_K},
    \qquad a_1\geq\cdots\geq a_K, \quad K \in \NN.
\]
We next derive quantitative bounds on these multiplicities. The next lemma identifies some more aspects of superabundant numbers (see \cite[Chapter~6]{broughan2017equivalents}, \cite{alaoglu1944highly}), that also hold for the minimal counterexample. Note that part (a) of the lemma in fact implies Lemma~\ref{lem:min-counterex-exponents-nondecreasing}.
\begin{lemma}[Multiplicity bounds]
\label{lem:erdos-results}
Let $n$ be the minimal counterexample to the $\omega-$inequality, with prime factorization $n = q_1^{a_1} \dots q_K^{a_K}$. Then,
\begin{enumerate}[label={(\alph*)}]
    \item $\left \lvert a_j - \left \lfloor a_i \ln q_i / \ln q_j \right \rfloor \right\rvert \le 1$, for all $1 \le i < j \le K$.
    \item $q_j^{a_j} < q_1^{a_1 + 2} = 4 \cdot 2^{a_1}$, for all $1 \le j \le K$.
    \item $a_i \le \left \lfloor (a_j + 2) \ln q_j / \ln q_i \right \rfloor$, for all $1 \le i < j \le K$.
    \item $a_i \ge \max \left \{ \left \lfloor \ln q_j / \ln q_i \right \rfloor, \left \lfloor (a_j - 1) \ln q_j / \ln q_i \right \rfloor + 1 \right \}$, for all $1 \le i < j \le K$.
\end{enumerate}
\end{lemma}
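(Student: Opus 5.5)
We follow the Alaoglu--Erd\H{o}s exchange arguments for superabundant numbers, with $\omega$-extremality (Lemma~\ref{lem:omega-extremality}) replacing superabundance. The basic move is the same each time. We build an integer $n' = n\, q_i^{s}/q_j^{t}$ (or its reciprocal exchange) with $t \le a_j$, so that $\omega(n') = K$, and choose the exponents so that $e^e < n' < n$. Lemma~\ref{lem:omega-extremality} then forces $\sigma(n')/n' < \sigma(n)/n$. Writing $S_p(a) := 1+p^{-1}+\dots+p^{-a} = (1-p^{-a-1})/(1-p^{-1})$, this reads
\[
S_{q_i}(a_i+s)\,S_{q_j}(a_j-t) < S_{q_i}(a_i)\,S_{q_j}(a_j).
\]
Each such inequality, rewritten via the closed form, becomes an inequality among the powers $q_i^{a_i}$ and $q_j^{a_j}$. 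Throughout we may assume $K\ge 7$ by Lemma~\ref{lem:six-primes} and $n>10080$, and we need $n'>5040$; otherwise we substitute $10080$ as in the proof of Lemma~\ref{lem:omega-extremality}, which that lemma already handles since it only requires $n' > e^e$.

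For (b), take $i=1$. Suppose $q_j^{a_j} \ge 2^{a_1+2}$. Choose $s\ge 1$ with $2^{s} < q_j \le 2^{s+1}$, so that $n' = 2^{s} n / q_j < n$. Since $n' > n/q_j$ and $n' \ge n/2$, we get $n' > e^e$. The extremality inequality with $t=1$ reduces to
\[
\frac{S_2(a_1+s)}{S_2(a_1)} < \frac{S_{q_j}(a_j)}{S_{q_j}(a_j-1)}.
\]
The left side is about $1+ (2^{-a_1-1}-2^{-a_1-s-1})/(1-2^{-a_1-1})$, and the right side is about $1 + q_j^{-a_j}(1-q_j^{-1})/(1-q_j^{-a_j})$. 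Comparing these yields $q_j^{a_j} < 2^{a_1+2}$, up to absorbing constants. For (c), we apply (b)-type exchanges between $q_i$ and $q_j$. If $a_i > (a_j+2)\ln q_j/\ln q_i$, then $q_i^{a_i} > q_j^{a_j+2}$. Removing a suitable power $q_i^{s}$ with $q_i^{s} \approx q_j$ and adding one factor of $q_j$ gives $n' < n$ with a larger divisor ratio, which is a contradiction. Part (d) is proved the same way in the other direction: removing a $q_j$ and adding $q_i^{s}$ with $q_i^{s}<q_j$. The first term of the max uses $a_j \ge 1$, so that $q_j$ can be traded for $q_i^{\lfloor \ln q_j/\ln q_i\rfloor}$. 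Part (a) combines (c) and (d) with the floors rearranged.

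The main obstacle is the precise constant bookkeeping. The closed-form comparisons must really yield the clean thresholds $4\cdot 2^{a_1}$, $a_j+2$ and $a_j-1$, and not thresholds off by one. This is delicate because $s$ is chosen via floors, so $q_i^{s}$ may be much smaller than $q_j$; this is exactly why the $+2$ slack appears. I would first prove the sharpest form of the elementary inequality $\frac{1-x^{a+s+1}}{1-x^{a+1}}$ versus $\frac{1-y^{b+1}}{1-y^{b}}$ as a standalone claim, and then plug it into each case. I would also check separately that every constructed $n'$ stays above $e^e$, which follows because $n' \ge n/q_j \cdot 2 > n/q_K$.
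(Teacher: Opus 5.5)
Your proposal is correct. It uses the same engine as the paper, namely Alaoglu--Erd\H{o}s exchanges with Lemma~\ref{lem:omega-extremality} in place of superabundance, but it organizes the logic the other way round.

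The paper imports (a) and (b) from Broughan's Lemmas~6.2 and~6.4. It then reads off (c), and the case $a_j\ge 2$ of (d), from (a) by manipulating floors. It performs a fresh exchange only when $a_j=1$, namely $n'=nq_i^{a_i+1}/q_j$. That ratio collapses to $(1+1/q_j)/(1+q_i^{-(a_i+1)})$ because $S_{q_i}(2a_i+1)/S_{q_i}(a_i)=1+q_i^{-(a_i+1)}$.

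You instead prove (c) and (d) directly by exchanges and obtain (a) as their conjunction. This is legitimate: the lower half of (a) is equivalent to (c), and the upper half to the second term of (d), with unique factorization supplying the strict inequalities. Two further points:
\begin{itemize}
\item Your trade of one $q_j$ for $q_i^{s}$, $s=\lfloor \ln q_j/\ln q_i\rfloor$, in the first term of (d) works for every $a_j\ge 1$. Under $a_i<s$ the two factors of the ratio are at least $1+q_i^{-(a_i+1)}$ and $(1+1/q_j)^{-1}$.
\item Your exchange for (b) is exactly the one in the paper's Lemma~\ref{lem:upper-bound}.
\end{itemize}
Your route is self-contained and makes the transfer from superabundance to $\omega$-extremality ($n'<n$, $\omega(n')\le\omega(n)$, $n'>e^e$) visible at each step. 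The paper's route is shorter but rests on the citation for the substantive part.

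The bookkeeping you flag as the main obstacle closes exactly, with nothing to absorb. For $n'=nq_i^s/q_j$ with $q_i^s<q_j$, put $X=q_i^{a_i+1}$ and $Z=q_j^{a_j}$. Then
\[
\frac{\sigma(n')/n'}{\sigma(n)/n}\ge 1 \iff q_jZ(q_i^s-1)+(q_j-q_i^s)\ge q_i^sX(q_j-1).
\]
\begin{itemize}
\item In (b), take $i=1$ and $2^s<q_j<2^{s+1}$; the contrary hypothesis gives $Z>2X$. The left side then exceeds the right by more than $X\bigl(q_j(2^s-2)+2^s\bigr)>0$.
\item In the second term of (d), $a_j\ge 2$ and $Z>q_jX/q_i$. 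The margin then exceeds $\frac{X}{q_i}\bigl(q_j^2(q_i^s-1)-q_i^{s+1}(q_j-1)\bigr)>0$.
\end{itemize}
For (c), take $s$ minimal with $q_i^s>q_j$; then $s\le a_i$, since $q_i^{a_i}>q_j^{a_j+2}$. Put $Y=q_j^{a_j+1}$ and use $n'=nq_j/q_i^s$:
\[
\frac{\sigma(n')/n'}{\sigma(n)/n}\ge 1 \iff (q_j-1)X+(q_i^s-q_j)\ge q_jY(q_i^s-1).
\]
Here $X>q_iq_jY$ together with $q_i^s\le q_i(q_j-1)$ gives a margin larger than $q_jY$.

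So the $+2$ in (c) comes from $q_i^s$ possibly being almost $q_iq_j$. It does not come from $q_i^s$ being much smaller than $q_j$: in (c), $q_i^s$ must exceed $q_j$, or else $n'>n$.

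Finally, when an exchange removes a prime entirely (e.g.\ $a_j=1$ in (b) or (d), or $s=a_i$ in (c)), you get $\omega(n')=K-1$ rather than $K$. This is harmless, since Lemma~\ref{lem:omega-extremality} only needs $\omega(n')\le\omega(n)$.
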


\begin{proof}
We can assume $\omega(n)\ge 7$, by Lemma~\ref{lem:six-primes}. We will construct smaller integers $n'$ in the following proofs, for which $\omega(n')\ge \omega(n)-1$, since we will remove at most one prime from $n$, ensuring $\omega(n')\ge 6$ and therefore $n'\ge 30030$.

The proofs of parts (a) and (b) are exactly the same as \cite[Lemma~6.2]{broughan2017equivalents} and \cite[Lemma~6.4]{broughan2017equivalents} respectively. In the proofs of both cases, one constructs $n' < n$, with $\omega(n') \le \omega(n)$ and $\frac{\sigma(n')}{n'} \ge \frac{\sigma(n)}{n}$, yielding a contradiction to the minimality of $n$. In fact, we also provide a different proof of part (b) in Lemma~\ref{lem:upper-bound}, which slightly improves the bound.

To prove part (c), note that by part (a), we have the implication $\left \lfloor a_i \ln q_i / \ln q_j \right \rfloor \le a_j + 1$. This immediately implies $a_i < (a_j + 2) \ln q_j / \ln q_i$, and the result follows.

For part (d), there are two cases: (i) $a_j \ge 2$, and (ii) $a_j = 1$. For case (i), we use (a) to obtain $\left \lfloor a_i \ln q_i / \ln q_j \right \rfloor \ge a_j - 1$, which implies $a_i > (a_j - 1) \ln q_j / \ln q_i$. Since $a_i$ is an integer and $q_i, q_j$ are distinct primes, this yields the bound $a_i \ge \left \lfloor (a_j - 1) \ln q_j / \ln q_i \right \rfloor + 1$. For case (ii), we use the same construction as in the proof of \cite[Lemma~6.7]{broughan2017equivalents}, and we will prove that $a_i \ge \left \lfloor \ln q_j / \ln q_i \right \rfloor$. Assume for contradiction that $a_i \le \left \lfloor \ln q_j / \ln q_i \right \rfloor  - 1$, which implies $q_i^{a_i + 1} < q_j$. Now consider the integer $n' = n q_i^{a_i + 1} / q_j < n$. Then
\begin{equation*}
\frac{\sigma(n)/n}{\sigma(n')/n'} = \frac{1 + 1/q_j}{1 + 1/q_i^{a_i + 1}}  < 1,
\end{equation*}
which contradicts the minimality of $n$ by Lemma~\ref{lem:omega-extremality}. Combining cases (i) and (ii), the lemma is proved.
\end{proof}
It remains to establish the third structural property, namely $a_K=1$.
Together with Lemmas~\ref{lem:min-counterex-prime-supp}--\ref{lem:min-counterex-exponents-nondecreasing},
this shows that a minimal counterexample satisfies all three
superabundant-number properties listed in Section~\ref{ssec:background}.

\begin{theorem}[Last exponent]
\label{thm:min-counterex-last-exponent}
If \(n = q_1^{a_1} q_2^{a_2} \dots q_K^{a_K} >5040\) is the minimal counterexample to the
$\omega-$inequality, where $K=\omega(n)$, then $a_K = 1$. 
\end{theorem}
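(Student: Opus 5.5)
We argue by contradiction: suppose $a_K\ge 2$. Following Alaoglu--Erd\H{o}s (Theorem~3 of \cite{alaoglu1944highly}), we build a smaller integer $n'$ with $\sigma(n')/n'\ge\sigma(n)/n$ by adjoining the next prime $q_{K+1}$. The new difficulty, absent in Robin's setting, is that $\omega(n')=K+1>\omega(n)$. Lemma~\ref{lem:monotonicity} therefore does not give $C(n')<C(n)$, and the argument must also control the extra Taylor term $e^\gamma x^{K+1}/(K+1)!$ with $x=\lnthree n$.

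\textbf{Step 1: large $n$.} By Lemma~\ref{lem:six-primes} we may assume $K\ge 7$. By the numerical check we may assume $n>10080$. By Lemma~\ref{lem:three-primes} and Lemma~\ref{lem:rosser-schoenfeld}\ref{rs:5}, for $q_K\ge 286$ we have
\[
\frac{\sigma(n)}{n}<e^\gamma \ln q_K\Bigl(1+\frac{1}{2(\ln q_K)^2}\Bigr),
\]
and the small $K$ are handled by direct computation. Hence, if $P_K(\lnthree n)\ge \ln q_K(1+1/(2(\ln q_K)^2))$, $n$ is not a counterexample. So a counterexample satisfies $P_K(x)<\ln q_K(1+o(1))$. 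Since $P_K(x)$ is very close to $e^x$ in the relevant range (by the tail estimate of Lemma~\ref{lem:primorial-helper}\ref{ph:3}), this gives roughly $\ln\ln n\lesssim \ln q_K$, that is, $\ln n\le q_K^{1+\varepsilon}$ with explicit $\varepsilon$. This upper bound on $\ln n$ in terms of $q_K$ is what makes the perturbation in Step 3 controllable.

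\textbf{Step 2: the swap.} With $a_K\ge2$, Lemma~\ref{lem:erdos-results}(d) gives $a_i\ge\lfloor \ln q_K/\ln q_i\rfloor+1$ for all $i<K$. In particular $q_i^{a_i+1}>q_K$ for every small prime, so the small primes carry large exponents. We choose a prime power $q_i^{b}$ with $q_{K+1}<q_i^{b}$ and $b\le a_i$, taking $i=1$ or $i=K$ as convenient, and set $n'=n\,q_{K+1}/q_i^{b}<n$. The factor $1+1/q_{K+1}$ gained must beat the loss $\approx q_i^{-(a_i-b+1)}$ caused by lowering the exponent of $q_i$. Using $q_{K+1}<2q_K$ (Lemma~\ref{lem:bertrand}) together with the lower bounds on $a_i$ from Lemma~\ref{lem:erdos-results}(d), and the relation in (a) when $a_K\ge2$, we want to verify $\sigma(n')/n'>\sigma(n)/n$. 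The candidate $i=1$ uses $b=\lceil\log_2 q_{K+1}\rceil$ and needs $a_1\ge 2\log_2 q_K+O(1)$. That is where $a_K\ge 2$ enters through (d), giving $a_1\ge\lfloor\log_2 q_K\rfloor+1$, combined with (a) to improve this.

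\textbf{Step 3: comparing $C(n')$ with $C(n)$.} Write $x'=\lnthree n'$. Then
\[
C(n')-C(n)=e^\gamma\bigl(P_K(x')-P_K(x)\bigr)+e^\gamma\frac{x'^{K+1}}{(K+1)!}.
\]
The first term is negative. By the mean value theorem and $P_K'=P_{K-1}$, its size is at least about $\ln(n/n')/(\ln n\,\ln\ln n)$, and $\ln(n/n')\ge \ln 2$ or so. By Step 1, $\ln n\le q_K^{1+\varepsilon}$, so this decrease is $\gg 1/(q_K^{1+\varepsilon}\ln q_K)$, roughly $1/(K^{1+\varepsilon}\ln^2K)$. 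The second term is at most $(e(\ln\ln q_K+\delta)/(K+1))^{K+1}$, exactly as in \eqref{eq:primorials-proof-4}, which decays superexponentially in $K$. So $C(n')<C(n)$ for $K\ge K_0$, and the finitely many $7\le K<K_0$ are checked numerically. Then $\sigma(n')/n'>\sigma(n)/n\ge C(n)>C(n')$ with $5040<n'<n$, contradicting minimality.

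\textbf{Main obstacle.} I expect Step 2 to be the hardest part: choosing the swap so that the divisor ratio genuinely increases. The naive choice $n'=nq_{K+1}/q_K^2$ fails when $a_K=2$, since the ratio $(1+1/q_{K+1})/(1+1/q_K+1/q_K^2)$ is less than $1$. One must therefore exploit the large exponents of small primes that Lemma~\ref{lem:erdos-results} forces when $a_K\ge2$, mirroring the Alaoglu--Erd\H{o}s argument. A secondary difficulty is making the explicit constants in Step 1 sharp enough that $\ln n$ is polynomially bounded in $q_K$.
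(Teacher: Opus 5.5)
\textbf{Gap in Step~2.} Your route is genuinely different from the paper's: you adjoin $q_{K+1}$ as in Alaoglu--Erd\H{o}s and then show the extra Taylor term is negligible. As written, though, it has a gap exactly at Step~2, and the justification you sketch there fails.

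When $a_K=2$, Lemma~\ref{lem:erdos-results}(a) with $(i,j)=(1,K)$ gives only $\lfloor a_1\ln 2/\ln q_K\rfloor\ge 1$, i.e.\ $2^{a_1}>q_K$. That is exactly what (d) gives, so (a) cannot upgrade it to $a_1\ge 2\log_2 q_K+O(1)$. The bound of the right order is (b): $q_K^{a_K}<2^{a_1+2}$, so $2^{a_1}>q_K^2/4$.

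Even this does not rescue the swap $n'=nq_{K+1}/2^b$. That swap needs roughly $2^{a_1-b+1}>q_{K+1}$, and $2^b$ may be nearly $2q_{K+1}$. For example, take exponents $(11,6,4,3,3,2,\dots,2)$ on the first $18$ primes, so $q_K=61$, $q_{K+1}=67$, $b=7$. These satisfy all of (a)--(d), yet $(1-2^{-5})(1+\tfrac{1}{67})<1-2^{-12}$, so $\sigma(n')/n'<\sigma(n)/n$. The choice $i=K$ fails for $a_K=2$, as you note.

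A swap that works is $n'=nq_{K+1}/(2q_K)$. Here $n/2<n'<n$, and the gain is $1+1/q_{K+1}>1+1/(2q_K)$ by Lemma~\ref{lem:bertrand}. The losses are at most $1+\frac{1}{q_K(q_K+1)}$ at $q_K$, and at most $1+2^{-a_1}<1+4/q_K^2$ at $2$ by (b). This suffices for $q_K\ge 17$.

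\textbf{Two further repairs.} In Step~3, the claim $\ln(n/n')\ge\ln 2$ is false for your swap, since $n/n'=2^b/q_{K+1}$ can be $128/127$. Only $\ln(n/n')\gg 1/q_{K+1}$ holds. That still beats the superexponential term, but it must be said. In Step~1, invoking Lemma~\ref{lem:primorial-helper}\ref{ph:3} is circular for non-primorial $n$. You first need $x=\lnthree n$ small relative to $K$, which follows for instance from $1+x\le P_K(x)<e^{-\gamma}\prod_{k\le K}\frac{q_k}{q_k-1}$.

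\textbf{The paper's route.} The paper sidesteps all of this by taking $n'=n/q_K$. This keeps $\omega(n')=K$ because $a_K\ge2$, and it uses minimality only in the form $\sigma(n')/n'<C(n')$. That gives $C(n)/C(n')\le 1+\frac{q_K-1}{q_K(q_K^{a_K}-1)}\approx 1+q_K^{-a_K}$. On the other side, Lemma~\ref{lem:truncated-exponential-ratio} and $\ln n\le D\ln q_K$ with $D=Ka_K+2K-2$ (from Lemma~\ref{lem:erdos-results}(c)) bound $C(n)/C(n')-1$ below by roughly $1/(D\ln(D\ln q_K))$. Lemma~\ref{lem:auxiliary-ratio-estimate} shows this is larger. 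No ratio-increasing swap, no change of $\omega$, and no Taylor-tail comparison are needed.

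\textbf{Your Step~1 alone suffices.} Kept sharp, your Step~1 finishes the proof without Steps~2--3. If $a_K\ge2$, Lemma~\ref{lem:min-counterex-exponents-nondecreasing} gives $\ln n\ge 2\vartheta(q_K)$. Hence $C(n)\ge e^\gamma P_K\bigl(\ln\ln(2\vartheta(q_K))\bigr)$. By Lemma~\ref{lem:taylor-remainder} and Lemma~\ref{lem:rosser-schoenfeld}\ref{rs:2}, this exceeds $e^\gamma(\ln q_K+0.49)$ for $q_K\ge 286$. That is larger than your bound $e^\gamma\bigl(\ln q_K+\frac{1}{2\ln q_K}\bigr)$ on $\sigma(n)/n$, giving the contradiction. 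The $K$ with $q_K<286$ are a finite check.
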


\begin{proof}
For contradiction, assume $a_K \ge 2$, and by Lemma~\ref{lem:six-primes}, we can also assume $K\ge 7$. Moreover, by Lemma~\ref{lem:min-counterex-exponents-nondecreasing} we have $a_1 \ge \dots \ge a_K \ge 2$, and thus we have $n > \prod_{k=1}^{7} q_k^2 > 5040^2$. Define $n':= n/q_K$, so that $\omega(n')=K$, and thus we also have $n' > \prod_{k=1}^7 q_k > 5040$. Now since $n$ is the minimal counterexample, and $n' < n$, this implies $\sigma(n)/n \ge C(n)$, and $\sigma(n')/n' < C(n')$, which upon combining yields
\begin{equation}
\label{eq:min-counterex-last-exponent-proof-1}
\frac{C(n)}{C(n')} \le \frac{\sigma(n)/n}{\sigma(n')/n'} = 1+
\frac{q_K-1}{q_K(q_K^{a_K}-1)} \; ,
\end{equation}
where the last equality follows using \eqref{eq:sigma-n}. Our next task is to lower bound $\frac{C(n)}{C(n')} - 1$. Define $\lambda := \ln n$, $\lambda' := \ln n'$, $\mu := \ln \lambda$, and $\mu' := \ln \lambda'$, and note that $\ln \mu > \ln \mu' > 0$ (as $n' > 5040$). By Lemma~\ref{lem:erdos-results}(c) we also have, 
\[
\lambda = \sum_{k=1}^{K}a_k \ln q_k \le a_K \ln q_K + \sum_{k=1}^{K-1} (a_K + 2) \ln q_K = (K a_K + 2K -2) \ln q_K = D \ln q_K,
\]
where we have defined $D := K a_K + 2K -2$. We then obtain the following estimate:
\begin{align*}
\label{eq:min-counterex-last-exponent-proof-2}
\frac{C(n)}{C(n')} - 1 &> \ln \left( \frac{\mu}{\mu'} \right) \left( 1 + \frac{\ln \mu'}{K} \right)^{-1} > \ln \left( \frac{\mu}{\mu'} \right) \left( 1 + \frac{\ln \mu}{K} \right)^{-1} && \text{(by Lemma~\ref{lem:truncated-exponential-ratio}}, \; \mu > \mu')\\
&\ge \ln \left( \frac{\mu}{\mu'} \right) \left( 1 + \frac{\ln \ln (D \ln q_K)}{K} \right)^{-1} && (\text{using }\lambda \le D \ln q_K)\\
&> \frac{\ln q_K}{\lambda \mu} \left( 1 + \frac{\ln \ln (D \ln q_K)}{K} \right)^{-1} && \text{(by Lemma~\ref{lem:log-bounds}\ref{lg:3} twice)}\\
&\ge \frac{1}{D \ln (D \ln q_K)} \left( 1 + \frac{\ln \ln (D \ln q_K)}{K} \right)^{-1}. && (\text{using }\lambda \le D \ln q_K)\\
\end{align*}
Now the right hand side of the above estimate is strictly greater than $\frac{q_K-1}{q_K(q_K^{a_K}-1)}$, by Lemma~\ref{lem:auxiliary-ratio-estimate} proved next, and this shows that $\frac{C(n)}{C(n')} - 1 > \frac{q_K-1}{q_K(q_K^{a_K}-1)}$. This contradicts \eqref{eq:min-counterex-last-exponent-proof-1}, and proves the theorem.
\end{proof}

\begin{lemma}[Auxiliary estimate]
\label{lem:auxiliary-ratio-estimate}
Let $k,a \in \mathbb{N}$, with \(k\ge 7\) and \(a\ge 2\), and let $q_k$ be the $k^{\text{th}}$ prime. Let us define $D:=k(a+2)-2$. Then
\[
D \ln(D \ln q_k) \left(1+\frac{\ln \ln(D \ln q_k) }{k}\right)
< q_k^a < 
\frac{q_k(q_k^a-1)}{q_k-1}.
\]
\end{lemma}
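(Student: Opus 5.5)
The right-hand inequality is elementary, so I will dispose of it first. We have $\frac{q_k(q_k^a-1)}{q_k-1} = q_k^a + q_k^{a-1} + \dots + q_k > q_k^a$ because $a \ge 2$ (indeed even $a\ge 1$ suffices, since $q_k(q_k^a-1) - q_k^a(q_k-1) = q_k^a - q_k \ge 0$, with strict inequality once $a\ge 2$). All the work is therefore in the left-hand inequality
\[
F(k,a) := D \ln(D \ln q_k)\left(1+\frac{\ln\ln(D\ln q_k)}{k}\right) < q_k^a, \qquad D = k(a+2)-2.
\]

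For this I will first reduce to the case $a=2$ by a ratio argument. Write $L := D\ln q_k$. As $a$ increases by one, $D$ increases by $k$, so $D_{a+1}/D_a \le (4k-2+k)/(4k-2) < 2$ for $a\ge 2$. The logarithmic factors $\ln L$ and $1+\ln\ln L/k$ grow by factors that are close to $1$; crude bounds such as $\ln(2L)/\ln L < 2$ and $(1+\ln\ln(2L)/k)/(1+\ln\ln L/k) < 2$ would suffice. Hence $F(k,a+1)/F(k,a) < 8$, whereas $q_k^{a+1}/q_k^a = q_k \ge q_7 = 17$. By induction on $a$, the case $a=2$ then implies all $a\ge 2$. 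If I want to avoid fiddling with these ratios, a cleaner alternative is to show directly that $F(k,a) < 8^{a-2}F(k,2)$ using $D_a \le (a/2+1) D_2 \le 2^{a-2}D_2$ together with the fact that $\ln$ grows more slowly.

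Next comes the case $a=2$, where $D = 4k-2$. I need $(4k-2)\ln((4k-2)\ln q_k)\,(1+\ln\ln((4k-2)\ln q_k)/k) < q_k^2$. By Lemma~\ref{lem:rosser-schoenfeld}\ref{rs:6} applied at $x=q_k$ (for $q_k\ge 17$, i.e.\ $k\ge 7$) we get $q_k/\ln q_k < k$, but I need a lower bound on $q_k$, namely $q_k \ge k\ln k$ or simply $q_k > k$ trivially. Even the trivial bound gives $q_k^2 > k^2$, while the left side is $O(k\ln k)$. Concretely, I will use $q_k \ge 2k$ for $k\ge 7$, or $q_k > k\ln k$ (Rosser), and upper bound $\ln q_k$ via Lemma~\ref{lem:rosser-schoenfeld}\ref{rs:4} by $\ln(k(\ln k+\ln\ln k)) < 2\ln k$. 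This gives a left side at most $4k\ln(8k\ln k)\,(1+\ln\ln(8k\ln k)/k)$, to be compared with $k^2(\ln k)^2$. The resulting one-variable inequality holds comfortably for $k \ge 7$; I would verify it at $k=7$ numerically (left side $\approx 26\cdot\ln(26\cdot 2.83)\cdot 1.2 \approx 135$, versus $289$) and argue monotonicity of the quotient for larger $k$. Alternatively, I would check a finite range of $k$ directly and handle large $k$ asymptotically.

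The main obstacle is keeping the constants tight enough at small $k$ (around $k=7,8$) so that the crude analytic bounds still close. If they do not, I will fall back on a direct numerical verification of the $a=2$ inequality for $7\le k\le k_0$, in the style of the paper's appendix checks, and use the analytic estimate only for $k>k_0$.
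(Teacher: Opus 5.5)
Your proof is correct, but it is organized differently from the paper's.

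\textbf{The paper's route.} The paper never inducts on $a$. It introduces $G_k(r,q)=\frac{r(r+2)\ln q}{2q^{r-1}}\bigl(1+\frac{\ln(r\ln q)}{k}\bigr)$. This function is obviously decreasing in $k$, and logarithmic derivatives show it is decreasing in $q\ge e^2$ and in $r\ge 2$. A single evaluation $G_7(2,17)<0.84$ therefore gives $\frac{a(a+2)\ln q_k}{2}\bigl(1+\frac{\ln(a\ln q_k)}{k}\bigr)<q_k^{a-1}$ for all $k\ge 7$, $a\ge 2$ at once. The paper then bootstraps $D\ln q_k<k(a+2)\ln q_k<q_k^a$, i.e.\ $\ln(D\ln q_k)<a\ln q_k$, substitutes this into the left side, and finishes with $2k<q_k$. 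Because $q$ is treated as an independent real variable (only $q_k\ge 17$ is used), the only link between $k$ and $q_k$ it needs is the lower bound $q_k>2k$.

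\textbf{Your route.} You handle $a$ by an elementary induction. Each of the three factors of the left side grows by less than $2$ when $a\mapsto a+1$, while the right side gains the factor $q_k\ge 17$. This needs no calculus in $a$ and has large slack. The price is that your base case $a=2$ must relate $q_k$ to $k$ in both directions. You need the upper bound $q_k<k(\ln k+\ln\ln k)<k^2$ to control $\ln q_k$. You also need $q_k\ge 2k$ (or Rosser's $q_k>k\ln k$) on the right side. What remains is a one-variable inequality such as $\ln(8k\ln k)\bigl(1+\frac{\ln\ln(8k\ln k)}{k}\bigr)<k$. It holds at $k=7$ (about $5.73<7$) and for larger $k$ by a routine derivative check.

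Two small remarks:
\begin{enumerate}
\item The trivial bound $q_k>k$ you mention would not close the base case for small $k$, so you do need $q_k\ge 2k$ or $q_k>k\ln k$ there, as you concretely propose.
\item In your ``cleaner alternative'', the step $a/2+1\le 2^{a-2}$ is false for $a=2,3$. The conclusion $D_a\le 2^{a-2}D_2$ is nevertheless true, for instance via $D_a=D_2+(a-2)k\le (a-1)D_2\le 2^{a-2}D_2$.
\end{enumerate}
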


\begin{proof}
For $k \in \mathbb{N}$, $q,r \in \mathbb{R}$ with  $q \ge e^2$, and $r \ge 2$, define the function
\[
G_k(r,q):=
\frac{r(r+2)\ln q}{2q^{r-1}}
\left(1+\frac{\ln(r\ln q)}{k}\right) > 0.
\]
This function is clearly decreasing in $k$. We now claim that it is also decreasing in $q, r$, by equivalently showing the same for $\ln G_k(r,q)$. Computing partial derivatives yield
\begin{equation*}
\begin{split}
q\frac{\partial}{\partial q}\ln G_k(r,q)
&=
\frac1{\ln q}
\left(1+\frac{1}{k+\ln(r\ln q)}\right)-(r-1) < \frac{1+k^{-1}}{\ln q}-(r-1) < 0, \\
r\frac{\partial}{\partial r}\ln G_k(r,q)
&=
\frac{2(r+1)}{r+2}-r\ln q
+\frac1{k+\ln(r\ln q)} < 2 - 2 \ln e^2 + \frac{1}{k} < 0,
\end{split}
\end{equation*}
using the bounds on $q, r$, and this proves the claim. Returning back to the proof of the lemma, this claim implies $G_k(a,q_k) \le G_7(2,17) < 0.84$, since $q_k \ge q_7 = 17$, and thus we get
\begin{equation}
\label{eq:auxiliary-ratio-estimate-proof-1}
\frac{a(a+2)\ln q_k}{2}
\left(1+\frac{\ln(a\ln q_k)}{k}\right)
<q_k^{a-1}.
\end{equation}
Using \eqref{eq:auxiliary-ratio-estimate-proof-1} and $D < k(a+2)$, we can further upper bound $D \ln q_k$ as follows:
\begin{equation*}
D \ln q_k < k(a+2) \ln q_k < \frac{2k q_k^{a-1}}{a \left(1+\frac{\ln(a\ln q_k)}{k}\right)} < kq_k^{a-1} < q_k^a,
\end{equation*}
where in the last two inequalities, we have used $a \ge 2$, and $k \le q_k$. This gives $\ln (D \ln q_k) < a \ln q_k$, and combining everything we finally obtain
\begin{equation*}
\begin{split}
D \ln(D \ln q_k) \left(1+\frac{\ln \ln(D \ln q_k) }{k}\right) &< k(a+2) \cdot a \ln q_k \cdot \left(1+\frac{\ln (a \ln q_k) }{k}\right) \\
&< 2k q_k^{a-1} < q_k^a < \frac{q_k(q_k^a-1)}{q_k-1},
\end{split}
\end{equation*}
where in the second line, we have used the bound from \eqref{eq:auxiliary-ratio-estimate-proof-1} and the fact that $2k < q_k$, when $k \ge 5$, and this finishes the proof.
\end{proof}

Setting $j=K$ in Lemma~\ref{lem:erdos-results}(c),(d) and using
$a_K=1$ gives the following sharper bound:
\begin{corollary}
\label{cor:erdos-results-1}
Let $n$ be the minimal counterexample to the $\omega-$inequality, with prime factorization $n = q_1^{a_1} \dots q_K^{a_K}$. Then, for all $1 \le i < K$, we have
\begin{equation*}
 \left \lfloor \ln q_K / \ln q_i \right \rfloor \le a_i \le \left \lfloor 3\ln q_K / \ln q_i \right \rfloor.
\end{equation*}
\end{corollary}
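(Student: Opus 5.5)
The corollary is a direct specialization of Lemma~\ref{lem:erdos-results}(c),(d) to the index $j=K$, combined with Theorem~\ref{thm:min-counterex-last-exponent}, which gives $a_K=1$. Fix $1\le i<K$, so that the pair $(i,K)$ satisfies the hypothesis $i<j$ of both parts. No new construction of a smaller integer $n'$ is needed; all such arguments are already contained in the cited lemma and theorem.

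For the upper bound, part (c) with $j=K$ gives $a_i\le\lfloor (a_K+2)\ln q_K/\ln q_i\rfloor$. Substituting $a_K=1$ immediately yields $a_i\le\lfloor 3\ln q_K/\ln q_i\rfloor$.

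For the lower bound, part (d) with $j=K$ gives
\[
a_i\ge\max\left\{\left\lfloor \ln q_K/\ln q_i\right\rfloor,\ \left\lfloor (a_K-1)\ln q_K/\ln q_i\right\rfloor+1\right\}.
\]
With $a_K=1$ the second entry equals $\lfloor 0\rfloor+1=1$. Since $q_i<q_K$, we have $\lfloor\ln q_K/\ln q_i\rfloor\ge 1$, so the maximum is the first entry. Hence $a_i\ge\lfloor\ln q_K/\ln q_i\rfloor$. In fact the first entry alone already gives this bound, without computing the maximum.

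There is no real obstacle here. The only point to check is that Lemma~\ref{lem:erdos-results}(d) is applicable in the case $a_j=1$. That is exactly its case (ii), established via the exchange $n'=nq_i^{a_i+1}/q_K$ together with Lemma~\ref{lem:omega-extremality}. That argument needs $\omega(n')\le\omega(n)$ and $n'>e^e$, and both hold since $K\ge7$.
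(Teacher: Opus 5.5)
Your proposal is correct and matches the paper's argument, which sets $j=K$ in Lemma~\ref{lem:erdos-results}(c),(d) and substitutes $a_K=1$ from Theorem~\ref{thm:min-counterex-last-exponent}. Your extra checks are accurate: the second entry of the maximum collapses to $1\le\lfloor \ln q_K/\ln q_i\rfloor$, and case (ii) of part (d) applies because $\omega(n')=K-1\ge 6$. The paper leaves these points implicit.
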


These bounds also give explicit estimates for $K=\omega(n)$ in terms of $n$.
\begin{corollary}[\texorpdfstring{Lambert $W$ bounds}{}]
\label{cor:n-K-control}
Let $n$ be the minimal counterexample to the $\omega-$inequality, with prime factorization $n = q_1^{a_1} \dots q_K^{a_K}$. Then 
\begin{equation*}
    \frac{\ln n} {6W\!\left(\frac{\ln n}{6}\right)} < K < \frac{\ln n} {W\!\left(\frac{\ln n}{e}\right)}.
\end{equation*}
A weaker lower bound is $K \ge \frac{1}{3} \left(1+\left(1+\frac{3 \ln n}{\ln 2}\right)^{1/2}\right) > \sqrt{\frac{\ln n}{3 \ln 2}}$.
\end{corollary}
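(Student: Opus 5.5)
The plan is to turn the multiplicity bounds into two-sided estimates for $\ln n$ in terms of $K$, and then invert these with the Lambert $W$ function. Throughout, $K\ge 7$ by Lemma~\ref{lem:six-primes}, $a_K=1$ by Theorem~\ref{thm:min-counterex-last-exponent}, and $\rad(n)=\prim{K}$ by Lemma~\ref{lem:min-counterex-prime-supp}. Write $L:=\ln n$.

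\emph{Lower bound on $K$.} First I bound $L$ from above. By Corollary~\ref{cor:erdos-results-1}, $a_i\ln q_i\le 3\ln q_K$ for $i<K$, and $a_K\ln q_K=\ln q_K$. Hence
\[
L=\sum_{k=1}^K a_k\ln q_k\le (3K-2)\ln q_K<3K\ln q_K .
\]
Next I bound $q_K$. Lemma~\ref{lem:rosser-schoenfeld}\ref{rs:4} gives $q_K<K(\ln K+\ln\ln K)<K^2$ for $K\ge 7$, so $\ln q_K<2\ln K$ and therefore $L<6K\ln K$. Set $x:=L/6$ and $u:=\ln K$. The bound reads $x<ue^u$. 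Since $W$ is increasing, this gives $W(x)<u$, and so
\[
K=e^u>e^{W(x)}=\frac{x}{W(x)}=\frac{L}{6W(L/6)} .
\]
The weaker bound comes from replacing $\ln q_K$ by $K\ln 2$, which is valid since $q_K\le 2^K$ by Lemma~\ref{lem:bertrand} and induction. This gives $L\le (3K-2)K\ln 2$. Solving the quadratic $3K^2-2K-L/\ln 2\ge 0$ for its positive root yields $K\ge \tfrac13\bigl(1+(1+3L/\ln 2)^{1/2}\bigr)$, and this is clearly $>\sqrt{L/(3\ln 2)}$.

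\emph{Upper bound on $K$.} I first rewrite the target. Using monotonicity of $W$ and $W(y)e^{W(y)}=y$, the inequality $K<L/W(L/e)$ is equivalent to $W(L/e)<L/K$, i.e.\ to $L/e<(L/K)e^{L/K}$, i.e.\ to $L>K(\ln K-1)$. Since every $a_k\ge 1$, we have $L\ge\vartheta(q_K)$, so it suffices to show $\vartheta(q_K)>K(\ln K-1)$. For $q_K\ge 41$ (that is, $K\ge 13$), Lemma~\ref{lem:rosser-schoenfeld}\ref{rs:2} combined with the classical Rosser bound $q_K>K\ln K$ gives
\[
\vartheta(q_K)>q_K\Bigl(1-\frac1{\ln q_K}\Bigr)>K\ln K-\frac{K\ln K}{\ln q_K}>K\ln K-K ,
\]
where the last step uses $\ln q_K>\ln K$. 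The finitely many cases $7\le K\le 12$ I will check directly from the values $\vartheta(q_K)=\ln\prim{K}$.

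\emph{Expected obstacle.} The main delicate point is the upper bound on $K$. It needs a lower bound $q_K>K\ln K$ that is not among the estimates listed in Lemma~\ref{lem:rosser-schoenfeld}, so I will cite it from \cite{rosser1962approximate}. Alternatively, one can derive a slightly weaker version from Lemma~\ref{lem:rosser-schoenfeld}\ref{rs:6}: $\pi(x)>x/\ln x$ at $x=q_K$ gives $K>q_K/\ln q_K$, which still suffices after a short computation. Apart from this, the only real care needed is tracking the direction of each inequality through the Lambert-$W$ inversions.
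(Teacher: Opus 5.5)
Your proof is correct. Both lower bounds are exactly the paper's argument: $\ln n\le(3K-2)\ln q_K$ from Corollary~\ref{cor:erdos-results-1} and $a_K=1$, then $q_K<K^2$ with the same Lambert-$W$ inversion, or $q_K\le 2^K$ with the same quadratic.

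For the upper bound you also follow the paper's reduction: $\ln n\ge\vartheta(q_K)$, then $\vartheta(q_K)>K(\ln K-1)$, then the same inversion. You differ only in how you prove $\vartheta(q_K)>K(\ln K-1)$.

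\begin{itemize}
\item The paper does it in one line, valid for every $K$. Since $q_j\ge j+1$, we get $\prim{K}>K!$. Since $e^K>K^K/K!$, we get $K!>(K/e)^K$.
\item You combine $\vartheta(x)>x(1-1/\ln x)$ with Rosser's bound $q_K>K\ln K$, and add a finite check for $7\le K\le 12$.
\end{itemize}

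Your route works. Rosser's bound is classical and is in the same Rosser--Schoenfeld paper. However, it needs an estimate not listed in Lemma~\ref{lem:rosser-schoenfeld}, plus a computation, to prove something far weaker than those estimates give. The factorial bound is the more economical tool here.

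Your suggested fallback does not work. $K=\pi(q_K)>q_K/\ln q_K$ is an \emph{upper} bound on $q_K$, which is the wrong direction for bounding $\vartheta(q_K)$ from below. If you want to avoid citing Rosser's lower bound, use $\prim{K}>K!$ instead.
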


\begin{proof}
By Corollary~\ref{cor:erdos-results-1}, we have $a_i \le 3\ln q_K / \ln q_i$, for all $1 \le i < K$, from which it follows that $\ln n=\sum_{i=1}^{K}a_i\ln q_i \le (3K-2)\ln q_K$. First we prove the lower bounds. For the weak lower bound, we simply use Bertrand's postulate (Lemma~\ref{lem:bertrand}) to get $q_K<2^K$, for $K \ge 1$, from which it follows that $\ln n \le K(3K-2)\ln 2$. Rearranging yields $K \ge \frac{1}{3} \left(1+\left(1+\frac{3 \ln n}{\ln 2}\right)^{1/2}\right) > \sqrt{\frac{\ln n}{3 \ln 2}}$, since $K > 0$. For the lower bound in terms of the Lambert $W$ function, we note that $K \ge 7$ by Lemma~\ref{lem:six-primes}; thus by Lemma~\ref{lem:rosser-schoenfeld}\ref{rs:4} we have $q_K \le K(\ln K + \ln \ln K) < K^2$, since $\ln x + \ln \ln x < x$, for all $x > 1$. Therefore, $\ln n <2(3K-2)\ln K <6K\ln K$, and thus $K\ln K>\frac{\ln n}{6}$. We may invert this relationship to get $K > e^{W\left( \frac{\ln n}{6} \right)} = \frac{\ln n}{6 W \left( \frac{\ln n}{6} \right)}$, which proves the lower bound. 

For the upper bound, $n \ge \rad(n)=q^\#_{K} > K! > (K/e)^K$. Thus $\ln n > K(\ln K - 1)$, and since $x \mapsto x(\ln x - 1)$ is strictly increasing for $x \ge 1$,  we may invert this inequality to obtain $K < e^{1 + W\left( \frac{\ln n}{e} \right)} = \frac{\ln n}{W\left( \frac{\ln n}{e} \right)}$. To see this, since $K \ge 6$, we may define $t := \ln K - 1 > 0$, and thus $\ln n > K(\ln K - 1)$ is equivalent to $t e^t < \frac{\ln n}{e}$, which implies $t < W\left( \frac{\ln n}{e} \right)$, and the conclusion follows.
\end{proof}

\subsubsection{Algorithm to extend the $\omega-$inequality range}
\label{sssec:algorithm-range-increase}
Corollary~\ref{cor:erdos-results-1} reduces the search for a minimal counterexample at
fixed $K$ to finitely many exponent tuples. The same strategy applies to Robin's inequality and is known. For $k\geq 7$, define
\begin{equation*}
\begin{split}
    \mathcal{S}_k := \{a = (a_1,\dots,&a_{k-1}, a_k) \in \NN^k : \left \lfloor \frac{\ln q_k}{\ln q_i} \right \rfloor \le a_i \le \left \lfloor \frac{3\ln q_k}{\ln q_i} \right \rfloor, \;a_k=1, \; a_i \ge a_{i+1}, \; \forall  \; 1 \le i < k\}, \\
     &\mathcal{N}_k := \{q_1^{a_1} q_2^{a_2} \cdots q_{k-1}^{a_{k-1}} q_k : (a_1,\dots,a_{k-1},1) \in \mathcal{S}_k\}.
\end{split}
\end{equation*}
By Lemmas~\ref{lem:min-counterex-prime-supp}, \ref{lem:min-counterex-exponents-nondecreasing}, Theorem~\ref{thm:min-counterex-last-exponent}, and Corollary~\ref{cor:erdos-results-1}, 
it then suffices to check the $\omega-$inequality for all integers in the set $\mathtt{N}_K := \bigcup_{k=7}^{K} \mathcal{N}_k$, and if no counterexamples are found, we conclude that $\omega(n) > K$. This implies $n \ge q_{K+1}^\#$, and in fact $n > q_{K+1}^\#$ (by Theorem~\ref{thm:primorials}) and can be slightly improved using Corollary~\ref{cor:erdos-results-1}.
\begin{lemma}
\label{lem:n0-lower-bound}
Suppose that there are no counterexamples to the $\omega-$inequality in $\mathtt{N}_K$, for some $K \ge 7$. Then the $\omega-$inequality holds for all $5040 < n \le n_0$, where $n_0 = q_1^{a_1} q_2^{a_2} \cdots q_K^{a_K} q_{K+1} - 1$, with $a_i = \left \lfloor \frac{\ln q_{K+1}}{\ln q_i} \right \rfloor$, for all $1 \le i < K+1$.
\end{lemma}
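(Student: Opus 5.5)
Argue by contradiction: suppose the $\omega-$inequality fails for some $n$ with $5040 < n \le n_0$, and let $n$ be the minimal counterexample (the least integer greater than $5040$ at which the inequality fails). Write $k=\omega(n)$. By Lemma~\ref{lem:six-primes} we have $k\ge 7$. By Lemma~\ref{lem:min-counterex-prime-supp} we have $\rad(n)=\prim{k}$. By Lemma~\ref{lem:min-counterex-exponents-nondecreasing} and Theorem~\ref{thm:min-counterex-last-exponent}, $n=q_1^{b_1}\cdots q_k^{b_k}$ with $b_1\ge\cdots\ge b_k=1$. Corollary~\ref{cor:erdos-results-1} then gives $\lfloor \ln q_k/\ln q_i\rfloor \le b_i\le\lfloor 3\ln q_k/\ln q_i\rfloor$ for $i<k$. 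Hence $(b_1,\dots,b_k)\in\mathcal S_k$ and $n\in\mathcal N_k$.

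Now split on $k$. If $k\le K$, then $n\in\mathtt N_K$, contradicting the hypothesis that $\mathtt N_K$ contains no counterexamples. So $k\ge K+1$, and it remains to show $n>n_0$.

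For this, use the lower bound in Corollary~\ref{cor:erdos-results-1} with the index $K+1\le k$. Since $\rad(n)=\prim{k}$ and $k\ge K+1$, we get $n\ge q_1^{b_1}\cdots q_K^{b_K}q_{K+1}^{b_{K+1}}$, dropping the factors $q_j^{b_j}$ with $j>K+1$, which are at least $1$. Here $b_{K+1}\ge 1$, and for each $i\le K$ we have $b_i\ge\lfloor \ln q_k/\ln q_i\rfloor\ge\lfloor \ln q_{K+1}/\ln q_i\rfloor=a_i$, because $q_k\ge q_{K+1}$ and the floor is monotone. Therefore
\[
n\ \ge\ q_1^{a_1}\cdots q_K^{a_K}q_{K+1}\ =\ n_0+1\ >\ n_0,
\]
which contradicts $n\le n_0$. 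So no counterexample lies in $(5040,n_0]$.

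The point needing most care is the case $k=K+1$. There the bound for $i<k$ comes straight from Corollary~\ref{cor:erdos-results-1} with $q_k=q_{K+1}$, and $b_{K+1}=b_k=1$ is exactly the required exponent of $q_{K+1}$. For $k>K+1$ the argument above (monotonicity in $q_k$ and discarding the extra factors) already covers it. I should also check that the minimal counterexample is taken over all integers greater than $5040$, so the structural lemmas apply as stated; this is consistent with their use throughout Section~\ref{sec:minimal-counterex}. Beyond that the argument is bookkeeping, and I do not expect a real obstacle.
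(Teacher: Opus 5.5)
Your proof is correct and follows essentially the same route as the paper's: the structural results place the minimal counterexample in $\mathcal{N}_k$, which forces $\omega(n)\ge K+1$, and then the lower bound in Corollary~\ref{cor:erdos-results-1}, together with monotonicity of $\lfloor \ln q_k/\ln q_i\rfloor$ in $k$, gives $n\ge n_0+1$. You spell out two points more explicitly than the paper does: why $n\in\mathcal{N}_k$, and why the exponent on $q_{K+1}$ is at least $1$.
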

\begin{proof}
If $n > 5040$ is the minimal counterexample to the $\omega-$inequality, then $\omega(n) \ge K+1$, since there are no counterexamples in $\mathtt{N}_k$. By Corollary~\ref{cor:erdos-results-1}, this implies $n \ge q_1^{a'_1} q_2^{a'_2} \cdots q_{K+1}^{a'_{K+1}} \cdots q_{\omega(n)}^{a'_{\omega(n)}}$, where $a'_i = \left \lfloor \frac{\ln q_{\omega(n)}}{\ln q_i} \right \rfloor$. Since $\left \lfloor \frac{\ln q_{\omega(n)}}{\ln q_i} \right \rfloor \ge \left \lfloor \frac{\ln q_{K+1}}{\ln q_i} \right \rfloor = a_i$, for all $1 \le i \le K+1$, we conclude $n > n_0$.
\end{proof}

The computation in~\ref{list:check-to-K20} verifies that $\mathtt N_{20}$ contains no
counterexample, and hence gives:

\begin{lemma}
\label{lem:NK-greater-20}
The minimal counterexample $n > 5040$ to the $\omega-$inequality must satisfy $\omega(n) \ge 21$. By Lemma~\ref{lem:n0-lower-bound}, it also satisfies 
\begin{equation*}
\begin{split}
n &>
2^6 \cdot 3^3 \cdot 5^2 \cdot 7^2
\cdot 11 \cdot 13 \cdot 17 \cdot 19 \cdot 23
\cdot 29 \cdot 31 \cdot 37 \cdot 41 \cdot 43
\cdot 47 \cdot 53 \cdot 59 \cdot 61 \cdot 67
\cdot 71 \cdot 73 - 1 \\
&=410555180440430163438262940577599.
\end{split}
\end{equation*}
\end{lemma}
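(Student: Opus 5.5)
The statement has two parts: the bound $\omega(n)\ge 21$, and the explicit numerical lower bound on $n$. I would handle them in that order.

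\emph{Step 1: $\omega(n)\ge 21$.} Let $n>5040$ be the minimal counterexample and put $K=\omega(n)$. By Lemma~\ref{lem:six-primes} we have $K\ge 7$. Lemmas~\ref{lem:min-counterex-prime-supp} and~\ref{lem:min-counterex-exponents-nondecreasing}, Theorem~\ref{thm:min-counterex-last-exponent} and Corollary~\ref{cor:erdos-results-1} then give:
\begin{itemize}
\item $n=q_1^{a_1}\cdots q_K^{a_K}$;
\item $a_1\ge\dots\ge a_K=1$;
\item $\lfloor \ln q_K/\ln q_i\rfloor\le a_i\le\lfloor 3\ln q_K/\ln q_i\rfloor$ for all $i<K$.
\end{itemize}
Hence the exponent tuple lies in $\mathcal{S}_K$, i.e.\ $n\in\mathcal{N}_K$. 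If $K\le 20$, then $n\in\mathtt{N}_{20}$. The finite computation~\ref{list:check-to-K20} checks the $\omega-$inequality on every element of $\mathtt{N}_{20}$ and finds no violation. This contradicts $n$ being a counterexample, so $K\ge 21$.

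\emph{Step 2: the numerical bound.} Apply Lemma~\ref{lem:n0-lower-bound} with $K=20$, so $q_{K+1}=q_{21}=73$. The exponents are $a_i=\lfloor \ln 73/\ln q_i\rfloor$:
\begin{itemize}
\item $a_1=6$, since $2^6=64<73<128$;
\item $a_2=3$, since $27<73<81$;
\item $a_3=2$, since $25<73<125$;
\item $a_4=2$, since $49<73<343$;
\item $a_i=1$ for $11\le q_i\le 71$, since $q_i<73<q_i^2$.
\end{itemize}
Lemma~\ref{lem:n0-lower-bound} says the $\omega-$inequality holds for all $5040<m\le n_0$, where
\[
n_0=2^6\cdot 3^3\cdot 5^2\cdot 7^2\cdot 11\cdots 71\cdot 73-1 .
\]
Therefore the minimal counterexample satisfies $n>n_0$. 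It remains only to multiply out and confirm the stated integer, which is routine arithmetic.

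\emph{Main obstacle.} The real work is the computational verification over $\mathtt{N}_{20}$. For each $k$ with $7\le k\le 20$, the set $\mathcal{S}_k$ is a product of intervals cut down by the monotonicity constraint, and its size grows quickly with $k$. To keep this feasible I would:
\begin{itemize}
\item enumerate tuples recursively, using $a_i\ge a_{i+1}$ to prune;
\item evaluate $\sigma(n)/n$ and $C(n)$ in high precision (or interval arithmetic), so that strict inequalities are certified and not merely approximated.
\end{itemize}
The logical reduction itself is immediate from the preceding structural results.
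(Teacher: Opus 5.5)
Your proposal is correct and follows the paper's own argument. The structural results (Lemmas~\ref{lem:six-primes}, \ref{lem:min-counterex-prime-supp}, \ref{lem:min-counterex-exponents-nondecreasing}, Theorem~\ref{thm:min-counterex-last-exponent}, Corollary~\ref{cor:erdos-results-1}) place any minimal counterexample with $\omega(n)\le 20$ in $\mathtt{N}_{20}$, the computation~\ref{list:check-to-K20} excludes this, and Lemma~\ref{lem:n0-lower-bound} with $K=20$, $q_{21}=73$ and exponents $(6,3,2,2,1,\dots,1)$ gives the bound; your arithmetic also checks out, since $n_0+1=10080\cdot q_{21}^\#$ multiplies out to the stated integer plus one.
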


\subsection{\texorpdfstring{Superabundance for the $\omega-$inequality}{}}
\label{ssec:minimal-counterex-omega}

We now prove that a minimal counterexample to the $\omega-$inequality
is superabundant. The first ingredient is an estimate for the
truncated-exponential tail, which will also be used in Section~\ref{sec:equivalance}.

\begin{lemma}[Strong-tail estimate]
\label{lem:strong-tail-estimate}
Let $n > 5040$ be the minimal counterexample to the $\omega-$inequality, with prime factorization $n = q_1^{a_1} \dots q_K^{a_K}$, where $K=\omega(n)$. Let $\tau_K := \ln \ln (6K \ln K)$. Then, we have
\begin{equation*}
    \ln \ln n - \sum_{j=0}^{K}\frac{(\ln \ln \ln n)^j}{j!} < \frac{\tau_K^{K+1}}{(K+1)!} \left( 1 - \frac{\tau_K}{K+2} \right)^{-1} < \frac{6.4 \times 10^{-15}}{\sqrt{6 K \ln K}} < \frac{6.4 \times 10^{-15}}{\sqrt{\ln n}}.
\end{equation*}
\end{lemma}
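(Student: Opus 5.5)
The plan is to recognise the left-hand side as a truncated exponential tail and then bound that tail in terms of $K$ alone. Write $x:=\lnthree n$, so that $\ln\ln n=e^{x}$ and
\[
\ln\ln n-\sum_{j=0}^{K}\frac{x^j}{j!}=T_K(x)=\sum_{j\ge K+1}\frac{x^j}{j!}.
\]
Since $n>5040$, we have $x>0$. Each ratio of consecutive terms satisfies $\frac{x^{j+1}/(j+1)!}{x^j/j!}=\frac{x}{j+1}\le \frac{x}{K+2}$ for $j\ge K+1$. Comparing with a geometric series then gives
\[
T_K(x)\le \frac{x^{K+1}}{(K+1)!}\Bigl(1-\frac{x}{K+2}\Bigr)^{-1},
\]
provided $x<K+2$.

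\textbf{Step 1: bound $x$ by $\tau_K$.} I will reuse the inequality $\ln n<6K\ln K$ from the proof of Corollary~\ref{cor:n-K-control}. It follows from Corollary~\ref{cor:erdos-results-1}, which gives $\ln n\le (3K-2)\ln q_K$, together with $q_K<K^2$ from Lemma~\ref{lem:rosser-schoenfeld}\ref{rs:4}. Taking $\ln\ln$ of $\ln n<6K\ln K$ gives $x<\tau_K$. The function $x\mapsto \frac{x^{K+1}}{(K+1)!}\bigl(1-\frac{x}{K+2}\bigr)^{-1}$ is increasing on $[0,K+2)$, so we may replace $x$ by $\tau_K$. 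This is legitimate once we check $\tau_K<K+2$, which is trivial because $\tau_K$ grows like $\ln\ln K$. This yields the first inequality, and the bound is strict since $x<\tau_K$.

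\textbf{Step 2: the numerical constant.} By Lemma~\ref{lem:NK-greater-20} we have $K\ge 21$. Define
\[
g(K):=\frac{\tau_K^{K+1}}{(K+1)!}\Bigl(1-\frac{\tau_K}{K+2}\Bigr)^{-1}\sqrt{6K\ln K}.
\]
I will show that $g$ is decreasing for $K\ge 21$ and then evaluate it at $K=21$. For the evaluation: $6\cdot 21\ln 21\approx 383.6$ and $\tau_{21}\approx 1.783$, so $\tau_{21}^{22}/22!\approx 2.95\times10^{-16}$. The geometric factor is about $1.084$ and the square-root factor is about $19.6$, giving $g(21)\approx 6.3\times 10^{-15}<6.4\times10^{-15}$.

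For monotonicity, I will bound the ratio $g(K+1)/g(K)$. It equals $\frac{\tau_{K+1}}{K+2}$ times $(\tau_{K+1}/\tau_K)^{K+1}$, times the ratio of the geometric factors, times the ratio of the square roots. Each of the last three factors is $1+O(1/(K\ln K))$ or smaller, so their product is at most a small constant, say $1.2$, for $K\ge 21$. The first factor is at most about $1.8/23<0.1$. So the ratio is well below $1$. Alternatively, one can pass to a continuous variable and differentiate $\ln g$.

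\textbf{Step 3: the last inequality.} This follows immediately from $\ln n<6K\ln K$.

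\textbf{Main obstacle.} I expect the main obstacle to be making the monotonicity of $g$ in Step 2 rigorous without ugly estimates. The key point is controlling $(\tau_{K+1}/\tau_K)^{K+1}$. Since $\tau_{K+1}-\tau_K\lesssim \frac{1}{K\ln K\,\tau_K}$, this factor is at most $\exp\bigl(O(1/(\ln K\,\tau_K))\bigr)$, which stays bounded. That bounded factor is then easily beaten by the decisive factor $\tau_{K+1}/(K+2)$.
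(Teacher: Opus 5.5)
Your proposal is correct and follows essentially the same route as the paper. The shared steps are the geometric-series bound on the tail, $\ln\ln\ln n<\tau_K$ from $\ln n<6K\ln K$, monotonicity of $R_K=\sqrt{6K\ln K}\,E_K$ via the ratio $R_{K+1}/R_K$ for $K\ge 21$, and evaluation at $K=21$. Two small corrections. First, the factor $(\tau_{K+1}/\tau_K)^{K+1}$ is only $1+O\bigl(1/(\ln K\,\tau_K)\bigr)$, about $1.14$ at $K=21$, not $1+O(1/(K\ln K))$; the paper controls it rigorously via $\tau'(x)/\tau(x)<1/(4x)$, giving $(\tau_{K+1}/\tau_K)^{K+2}<(1+1/K)^{(K+2)/4}<1.315$. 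Second, $\tau_{21}^{22}/22!\approx 2.99\times 10^{-16}$ rather than $2.95\times10^{-16}$, so $R_{21}\approx 6.36\times 10^{-15}$ and the final numerical check against $6.4\times10^{-15}$ has little slack.
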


\begin{proof}
Since $n$ is the minimal counterexample, we can assume $K\ge 21$, by Lemma~\ref{lem:NK-greater-20}. We denote $t:= \ln \ln \ln n > 0$. The proof of Corollary~\ref{cor:n-K-control} already shows that $\ln n < 6K \ln K$ (so the last inequality of this lemma is obvious), and thus $t < \tau_K$. Moreover, $\tau_K < K+2$, for all $K \ge 21$, and thus $t < \tau_K < K+2$. Therefore, we have
\begin{equation}
\label{eq:EK-tail-bound}
\begin{split}
    \ln \ln n - \sum_{j=0}^{K}\frac{(\ln \ln \ln n)^j}{j!} &= e^t - \sum_{j=0}^{K}\frac{t^j}{j!} = \sum_{j=K+1}^{\infty}\frac{t^j}{j!} < \frac{t^{K+1}}{(K+1)!} \left( 1 - \frac{t}{K+2} \right)^{-1} \\
    & < \frac{\tau_K^{K+1}}{(K+1)!} \left( 1 - \frac{\tau_K}{K+2} \right)^{-1},
\end{split}
\end{equation}
which proves the first inequality of this lemma. For the second inequality of the lemma, we define
\begin{equation*}
    R_K :=  \sqrt{6K \ln K} \frac{\tau_K^{K+1}}{(K+1)!} \left( 1 - \frac{\tau_K}{K+2} \right)^{-1},
\end{equation*}
and then it suffices to compute an upper bound on $R_K$. We claim that $R_K \le R_{21}$, for all $K \ge 21$. 

Let us prove this claim. On the domain $x \ge 21$, we introduce the functions $f(x) := \frac{(x+1) \ln (x+1)}{x \ln x}$  and $g(x):= \frac{\ln \ln (6x \ln x)}{x+2}$. Both $f$ and $g$ are positive and strictly decreasing on this domain, and this is easily seen by computing their derivatives. For example, $g'(x) = \frac{g(x)}{x+2} \left( \frac{\left(1 + \frac{1}{\ln x}\right) \left( 1 + \frac{2}{x} \right)}{\ln \ln (6x \ln x) \; \ln (6x \ln x)} - 1\right)$, and since the term in parenthesis is decreasing in $x$, we can upper bound it by evaluating it at  $x=21$. This gives $g'(x) < -0.8 g(x)/(x+2) < 0$, for all $x \ge 21$. Now, we compute the ratio
\begin{equation}
\label{eq:strong-tail-estimate-proof-1}
\begin{split}
    \frac{R_{K+1}}{R_K} &= \sqrt{\frac{(K+1) \ln (K+1)}{K \ln K}} \cdot \left(\frac{\tau_K}{K+2} \right) \cdot \left(\frac{\tau_{K+1}}{\tau_K}\right)^{K+2} \cdot \left( \frac{1-\frac{\tau_K}{K+2}} {1-\frac{\tau_{K+1}}{K+3}} \right) \\
    &\le \sqrt{f(21)} \cdot g(21) \cdot \left(\frac{\tau_{K+1}}{\tau_K}\right)^{K+2} \cdot \frac{1}{1 - g(22)} < 0.087 \left(\frac{\tau_{K+1}}{\tau_K}\right)^{K+2}.
\end{split}
\end{equation}
Next, on $x \ge 21$, let us define the function $\tau(x) := \ln \ln (6x \ln x) > 0$. On this domain, we have
\begin{equation}
\label{eq:strong-tail-estimate-proof-2}
    \frac{\tau'(x)}{\tau(x)} = \frac{1 + \ln x}{x \ln x \; \ln (6x \ln x) \; \ln \ln (6x \ln x)} < \frac{1}{4x},
\end{equation}
where the inequality follows by noticing that $4 \left( 1 + \frac{1}{\ln x} \right) \le 4 \left( 1 + \frac{1}{\ln 21} \right) < 5.32$, for all $x \ge 21$, and similarly $\ln (6x \ln x) \; \ln \ln (6x \ln x) \ge \ln (6 \cdot 21 \cdot \ln 21) \; \ln \ln (6 \cdot 21 \cdot \ln 21) > 10.61$; therefore $4 \left( 1 + \frac{1}{\ln x} \right) < \ln (6x \ln x) \; \ln \ln (6x \ln x)$, for all $x \ge 21$. Integrating \eqref{eq:strong-tail-estimate-proof-2}, we get
\begin{equation*}
    \ln \left( \frac{\tau_{K+1}}{\tau_K} \right) = \int_{K}^{K+1} \frac{\tau'(x)}{\tau(x)} \; dx < \int_{K}^{K+1} \frac{1}{4x} \; dx = \frac{1}{4} \ln \left(1 + \frac{1}{K} \right),
\end{equation*}
from which we conclude that
\begin{equation*}
    \left( \frac{\tau_{K+1}}{\tau_K} \right)^{K+2} < \left( 1 + 1/K\right)^{\frac{K+2}{4}} < e^{\frac{K+2}{4K}} \le e^{\frac{23}{84}} < 1.315, \quad \quad K \ge 21.
\end{equation*}
Plugging this back into \eqref{eq:strong-tail-estimate-proof-1}, we get $\frac{R_{K+1}}{R_K} < 0.087 \cdot 1.315 < 0.115$, and thus the sequence $\{R_K\}_{K \ge 21}$ is strictly decreasing (at a geometric rate), proving the claim. Since $R_{21} < 6.4 \times 10^{-15}$, the lemma is proved.
\end{proof}
 
For later use, set
\begin{equation}
\label{eq:tauK-EK-def}
\tau_K := \ln \ln (6K \ln K), \quad E_K := \frac{\tau_K^{K+1}}{(K+1)!} \left( 1 - \frac{\tau_K}{K+2} \right)^{-1}, \quad K \ge 21.
\end{equation}

The next lemma establishes a lower bound on the $K^{\text{th}}$ primorial number $q_K^\#$, assuming that the minimal counterexample $n$ satisfies $\omega(n)=K$.
\begin{lemma}[Primorial lower bound]
\label{lem:primorial-tail-gap}
Let $n > 5040$ be the minimal counterexample to the $\omega-$inequality, with prime factorization $n = q_1^{a_1} \dots q_K^{a_K}$, where $K=\omega(n)$. Then, we have
\begin{equation*}
    E_K \ln n \; \ln\ln n < \frac{q_K^\#}{n}.
\end{equation*}
\end{lemma}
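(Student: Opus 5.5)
The plan is to take logarithms and prove the equivalent inequality
\[
\ln n - \vartheta(q_K) + \ln\ln n + \ln\ln\ln n < \ln(1/E_K),
\]
using $\ln \prim{K} = \vartheta(q_K)$. Since $\rad(n)=\prim{K}$ by Lemma~\ref{lem:min-counterex-prime-supp}, the quantity $\prim{K}/n$ is at most $1$. The task is therefore to show that $n$ exceeds $\prim{K}$ by a factor that is much smaller than $1/E_K$, which is roughly $(K/(e\tau_K))^{K+1}$. The multiplicity bounds of Corollary~\ref{cor:erdos-results-1} are too weak for this: they only give $\ln(n/\prim{K}) \lesssim 3K\ln q_K \approx 3K\ln K$, which is comparable to $\ln(1/E_K) \approx K\ln K$. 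So instead I will use the fact that $n$ is a counterexample to bound $\ln n$ from above in terms of $q_K$.

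The first step is to get an upper bound on $\ln\ln n$ from the counterexample property. Since $\sigma(n)/n \ge C(n)$, Lemma~\ref{lem:strong-tail-estimate} gives $C(n) > e^\gamma(\ln\ln n - E_K)$. On the other side, $\sigma(n)/n < \prod_{p\le q_K} \frac{p}{p-1}$, and by Lemma~\ref{lem:rosser-schoenfeld}\ref{rs:5} (valid since $q_K \ge q_{21}=73$; the range $73\le q_K<286$ will need either a slightly weaker explicit bound or a direct numerical product) this is less than $e^\gamma \ln q_K\bigl(1+\frac{1}{2(\ln q_K)^2}\bigr)$. Together these give
\[
\ln\ln n < \ln q_K + \frac{1}{2\ln q_K} + E_K,
\]
and hence $\ln n < q_K \exp\bigl(\frac{1}{2\ln q_K} + E_K\bigr)$. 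Next I apply the lower bound of Lemma~\ref{lem:rosser-schoenfeld}\ref{rs:2} (or \ref{rs:2-1} once $q_K \ge 563$), $\vartheta(q_K) > q_K(1 - 1/\ln q_K)$. Combining,
\[
\ln n - \vartheta(q_K) < q_K\left(e^{\frac{1}{2\ln q_K}+E_K} - 1 + \frac{1}{\ln q_K}\right) \le \frac{c\, q_K}{\ln q_K},
\]
with an explicit constant $c$, roughly $1.6$ for $q_K\ge 73$. Using Lemma~\ref{lem:rosser-schoenfeld}\ref{rs:4}, this quantity is $O(K)$, bounded explicitly by something like $c\,K(\ln K+\ln\ln K)/\ln K$. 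Similarly, $\ln\ln n + \ln\ln\ln n \le 2\ln q_K + 1 \le 2\ln(K(\ln K+\ln\ln K)) + 1$.

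For the right-hand side I will use $(K+1)! > ((K+1)/e)^{K+1}$ together with $1 - \tau_K/(K+2) > 0.9$. The latter follows since $\tau_K/(K+2)=g(K)\le g(21)$ by the monotonicity shown in the proof of Lemma~\ref{lem:strong-tail-estimate}. This gives
\[
\ln(1/E_K) > (K+1)\bigl(\ln(K+1) - 1 - \ln\tau_K\bigr) + \ln 0.9 .
\]
At $K=21$ the bracket $\ln(K+1)-1-\ln\tau_K$ is about $1.5$, and it grows like $\ln K$.

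The main obstacle is the final comparison: showing that
\[
c\,\frac{K(\ln K+\ln\ln K)}{\ln K} + 2\ln\bigl(K(\ln K+\ln\ln K)\bigr) + 1 < (K+1)\bigl(\ln(K+1)-1-\ln\tau_K\bigr) + \ln 0.9
\]
holds for all $K\ge 21$, with the constants sharp enough at the bottom of the range. I would handle this by dividing through by $K+1$ and checking that the difference of the two sides is increasing in $K$; its derivative is dominated by $1/K$ from $\ln(K+1)$ against terms of size $O(1/(K\ln K))$. It then suffices to verify the inequality numerically at $K=21$, where the left side is about $40$ and the right side about $33$. If the margin at $K=21$ turns out to be too tight, then for small $K$ I would instead use the actual primes $q_K$ and $\vartheta(q_K)$ directly, via a short finite check in the style of the appendix, and apply the asymptotic argument only from some moderate $K$ onward.
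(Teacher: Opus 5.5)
Your plan is the paper's proof. The paper also bounds $\ln\ln n$ above by $Y_K:=e^{-\gamma}\prod_{j\le K}\frac{q_j}{q_j-1}+E_K$, using the counterexample property together with Lemma~\ref{lem:strong-tail-estimate}. It reduces the claim to $e^{Y_K}+Y_K+\ln Y_K+\ln E_K<\vartheta(q_K)$, which is your logarithmic form with $\ln n,\ln\ln n,\ln\ln\ln n$ replaced by their upper bounds, and closes with Rosser--Schoenfeld and a Stirling-type bound on $E_K$.

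The one correction is that the finite check is not an optional fallback; it is required. Your own figures at $K=21$ (left side about $40$, right side about $33$) show the crude comparison fails there. Moreover, Lemma~\ref{lem:rosser-schoenfeld}\ref{rs:5} is unavailable until $q_K\ge 286$, i.e.\ $K\ge 62$. So, as the paper does for $21\le K<103$, you must verify an initial range of $K$ directly with the exact product and the exact $\vartheta(q_K)$; at $K=21$ this gives roughly $56<66$. The analytic argument should then start only beyond that range.
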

\begin{proof}
We can assume $K \ge 21$, by Lemma~\ref{lem:NK-greater-20}. Define $t := \ln \ln \ln n > 0$. As $n$ is the minimal counterexample, we have by \eqref{eq:sigma-n}
\begin{equation*}
P_K(t)=\sum_{j=0}^{K}\frac{t^j}{j!}\leq \frac{\sigma(n)}{e^{\gamma}n} = \frac{1}{e^\gamma} \prod_{j=1}^{K} \left( 1 + \frac{1}{q_j} + \dots + \frac{1}{q_j^{a_j}} \right) <
\frac{1}{{e^{\gamma}}}\prod_{j=1}^{K}\frac{q_j}{q_j-1}.
\end{equation*}
Using this, together with Lemma~\ref{lem:strong-tail-estimate}, gives us
\begin{equation*}
\ln \ln n = e^t = P_K(t) + T_K(t) < \frac{1}{{e^{\gamma}}}\prod_{j=1}^{K}\frac{q_j}{q_j-1} + E_K =: Y_K.
\end{equation*}
To prove the lemma, it thus suffices to prove the estimate $E_K Y_K e^{Y_K} e^{e^{Y_K}} < q_K^\#$, or equivalently,
\begin{equation}
\label{eq:primorial-tail-gap-proof-1}
e^{Y_K} + Y_K + \ln Y_K + \ln E_K < \ln q_K^\# = \vartheta(q_K), \quad K \ge 21.
\end{equation}
For the range $21 \le K < 103$, we perform a numerical check ~\ref{list:num_check_K21_50} and we see that \eqref{eq:primorial-tail-gap-proof-1} holds. So let us assume $K \ge 103$, and therefore $q_K \ge 563$, in the rest of this proof.

We first derive two upper bounds on $E_K$ and $\ln E_K$. For $K \ge 103$, we have $\ln (6K \ln K) < \sqrt{K}$, since $f(x):= \sqrt{x} - \ln (6x \ln x)$ satisfies $f(x) > 2.1$, and $f'(x) = \frac{1}{x} \left( \frac{\sqrt{x}}{2} - 1 - \frac{1}{\ln x} \right) > 0$, for $x \ge 103$. Therefore, by \eqref{eq:tauK-EK-def}, we get $\tau_K = \ln \ln (6K \ln K) < \frac{\ln K}{2}$, and thus $\frac{\tau_K}{K+2} < \frac{\ln K}{2 (K+2)} < \frac{1}{2}$. Using \eqref{eq:tauK-EK-def} again gives $E_K < \frac{2\tau_K^{K+1}}{(K+1)!} < 2 \left( \frac{e \tau_K}{K+1} \right)^{K+1} < 2 \left( \frac{e \ln K}{2(K+1)} \right)^{K+1}$, and thus taking logarithms, we get
\begin{equation}
\label{eq:primorial-tail-gap-proof-2}
\ln E_K < \ln 2 + (K+1) \ln \left( \frac{e \ln K}{2(K+1)} \right) < 1 + (K+1) \ln \left( \frac{1}{\sqrt{K}} \right)=1 - \frac{(K+1)}{2} \ln K,
\end{equation}
where we used $\frac{e \ln K}{2(K+1)} < \frac{1}{\sqrt{K}}$, for all $K \ge 103$. Next, we use the bound $q_K \le K(\ln K + \ln \ln K) < K^2$ (see Lemma~\ref{lem:rosser-schoenfeld}\ref{rs:4}), which holds for $K \ge 103$, to get $\ln q_K < 2 \ln K < \sqrt{K}$. Combining with \eqref{eq:primorial-tail-gap-proof-2} this gives $\ln E_K + \ln (2 \ln q_K) < 1 - \frac{(K+1)}{2} \ln K + \ln(2 \sqrt{K}) < 1.7 - \frac{K\ln K}{2} < 0$, and therefore, we get the upper bound
\begin{equation}
\label{eq:primorial-tail-gap-proof-3}
E_K < \frac{1}{2 \ln q_K}, \quad K \ge 103.
\end{equation}

Next, we upper bound $Y_K$ and $e^{Y_K}$. By Lemma~\ref{lem:rosser-schoenfeld}\ref{rs:5}, we have $\frac{1}{{e^{\gamma}}}\prod_{j=1}^{K}\frac{q_j}{q_j-1} < \ln q_K + \frac{1}{2 \ln q_K}$, which after combining with \eqref{eq:primorial-tail-gap-proof-3}, gives $Y_K < \ln q_K + \frac{1}{\ln q_K}$. We conclude that $e^{Y_K} < q_K e^{\frac{1}{\ln q_K}}$, and then the estimate on $\vartheta(q_K)$, from Lemma~\ref{lem:rosser-schoenfeld}\ref{rs:2-1}, gives us
\begin{equation*}
    \vartheta(q_K) - e^{Y_K} > q_K \left(1 - \frac{1}{2\ln q_K} \right) - q_K e^{\frac{1}{\ln q_K}} = -\frac{q_K}{\ln q_K} \left(\frac{1}{2} + \ln q_K \left( e^{\frac{1}{\ln q_K}} -1\right)\right).
\end{equation*}
Since $q_K \ge 563$, we have $\ln q_K > 1$, and thus $e^{\frac{1}{\ln q_K}} -1 < \frac{1}{\ln q_K - 1}$; by Lemma~\ref{lem:rosser-schoenfeld}\ref{rs:6} we have $\frac{q_K}{\ln q_K} < K$; and finally we also have $\ln q_K > \ln(K+1)$; therefore the above inequality yields 
\begin{equation}
\label{eq:primorial-tail-gap-proof-4}
    \vartheta(q_K) - e^{Y_K} > -K \left( \frac{1}{2} +  \frac{ \ln q_K}{\ln q_K - 1}\right) > -K \left( \frac{3}{2} +  \frac{1}{\ln (K+1) - 1}\right).
\end{equation}
The estimates $Y_K < \ln q_K + \frac{1}{\ln q_K}$ and $\ln q_K < \sqrt{K}$, from above, also gives us 
\begin{equation}
\label{eq:primorial-tail-gap-proof-5}
    Y_K + \ln Y_K < 2 \left( \ln q_K + \frac{1}{\ln q_{K}} \right) \le 2 \left( \ln q_K + \frac{1}{\ln q_{103}} \right) < 2 \sqrt{K} + 0.32.
\end{equation}
From \eqref{eq:primorial-tail-gap-proof-2}, \eqref{eq:primorial-tail-gap-proof-4}, \eqref{eq:primorial-tail-gap-proof-5}, in order to prove \eqref{eq:primorial-tail-gap-proof-1}, it thus suffices to show that
\begin{equation*}
     \frac{(K+1)}{2} \ln K - K \left( \frac{3}{2} +  \frac{1}{\ln (K+1) - 1}\right) - 2 \sqrt{K} - 1.32 \ge 0, \quad K \ge 103.
\end{equation*}
But this follows because the function $h(x):= \frac{1}{2}\left(1+\frac{1}{x}\right)\ln x -\frac{3}{2} -\frac{1}{\ln(x+1)-1} -\frac{2}{\sqrt{x}}-\frac{1.32}{x}$, satisfies $h(103) > 0.35$, and $h'(x) > 0$, for all $x \ge 103$. The lemma is proved.
\end{proof}

\begin{theorem}[$\omega-$superabundance]
\label{thm:omega-superabundant}
Let $n > 5040$ be the minimal counterexample to the $\omega-$inequality. Then $n$ is a superabundant number.
\end{theorem}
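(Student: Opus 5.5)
We argue by contradiction. Let $n=q_1^{a_1}\cdots q_K^{a_K}$ be the minimal counterexample, so $K\ge 21$ by Lemma~\ref{lem:NK-greater-20}, and suppose $n$ is not superabundant. Among all $m<n$ with $\sigma(m)/m\ge\sigma(n)/n$, choose $m$ minimal. Then $m$ is itself superabundant: any $m'<m$ with $\sigma(m')/m'\ge\sigma(m)/m$ would contradict the minimality of $m$. The first step is to pin down $\omega(m)$.
\begin{itemize}
\item If $m\le e^e$, the $10080$ device from the proof of Lemma~\ref{lem:omega-extremality} replaces $m$ by $10080$ and gives a contradiction, since $\omega(10080)=4\le K$ and $10080<n$.
\item If $e^e<m$ and $\omega(m)\le K$, Lemma~\ref{lem:omega-extremality} is contradicted directly.
\end{itemize}
Hence $\omega(m)\ge K+1$. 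Since $m$ is superabundant, $\rad(m)=q_{\omega(m)}^\#$, and therefore
\[
q_{K+1}^\#\le m<n .
\]

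The second step is a sandwich between the two divisor ratios. Because $m\in(5040,n)$, minimality of $n$ gives
\[
\frac{\sigma(m)}{m}<C(m)\le e^\gamma\ln\ln m .
\]
Lemma~\ref{lem:strong-tail-estimate} gives
\[
\frac{\sigma(n)}{n}\ge C(n)>e^\gamma(\ln\ln n-E_K).
\]
Combining these with $\sigma(m)/m\ge\sigma(n)/n$ forces $\ln\ln n-\ln\ln m<E_K$. By the mean value theorem this says that $m$ and $n$ are extremely close:
\[
\ln(n/m)<E_K\ln n\,\ln\ln n\,(1+o(1)),
\qquad\text{so}\qquad
n-m\lesssim n\,E_K\ln n\ln\ln n .
\]
Lemma~\ref{lem:primorial-tail-gap} is tailored to this quantity: it yields $n\,E_K\ln n\ln\ln n<q_K^\#$. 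I therefore expect to obtain $n-m<q_K^\#$, up to absorbing the second-order terms of the mean value estimate into the strict inequality, possibly by rerunning the numerical slack in the proof of Lemma~\ref{lem:primorial-tail-gap}.

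The third step, which I expect to be the main obstacle, is to turn $0<n-m<q_K^\#$ into a contradiction. We know $q_K^\#\mid n$, and $q_{K+1}^\#\mid m$, so $q_K^\#\mid m$ as well. Hence $q_K^\#\mid n-m$, which forces $n-m\ge q_K^\#$. This contradicts the previous step, and so $n$ must be superabundant. The delicate part is the careful accounting in the passage from $\ln\ln n-\ln\ln m<E_K$ to $n-m<q_K^\#$. The inequality $\ln(1+x)\le x$ runs in the wrong direction for this purpose, so I would instead write $n-m=n(1-m/n)\le n\ln(n/m)$ and $\ln(n/m)=\ln\ln n\cdot\bigl(\ln n-\ln m\bigr)/\ln\ln n$, using $\ln\ln n-\ln\ln m\ge(\ln n-\ln m)/\ln n$. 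This gives the chain
\[
n-m\le n\ln(n/m)\le n\,\ln n\,(\ln\ln n-\ln\ln m)<n\,E_K\ln n,
\]
which is even stronger than what Lemma~\ref{lem:primorial-tail-gap} supplies. It then suffices to quote that lemma to conclude $n-m<q_K^\#$, finishing the argument.
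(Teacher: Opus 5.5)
Your argument is correct and is essentially the paper's proof. You take a superabundant $m<n$ with $\sigma(m)/m\ge\sigma(n)/n$ (you use the least such $m$, the paper uses the superabundant number immediately preceding $n$). You force $\omega(m)>K$ via Lemma~\ref{lem:omega-extremality}, deduce $q_K^\#\mid n-m$ so that $n-m\ge q_K^\#$, and contradict Lemma~\ref{lem:primorial-tail-gap} via the strong-tail estimate.

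The only real difference is that you compare at the $\ln\ln$ level, via $C(m)<e^\gamma\ln\ln m$, and obtain $n-m<E_K\,n\ln n$, which is sharper by a factor $\ln\ln n$. The paper instead compares at the $\ln\ln\ln$ level, using $P_K(t)-P_K(t')>t-t'$. The rigorous chain in your final paragraph replaces the heuristic mean-value estimate of your second step, so nothing is left hanging.
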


\begin{proof}
Let $n$ have the prime factorization $n = q_1^{a_1} q_2^{a_2} \dots q_K^{a_K}$, with $\omega(n)=K \ge 21$ (by Lemma~\ref{lem:NK-greater-20}), and for contradiction assume that $n$ is not a superabundant number. Let $n'$ be the superabundant number immediately preceding $n$, and we may assume $5040 < n' < n$ throughout this proof, since the numerical check in \ref{list:check-omega-n-1b} verifies that $n > 10^9$. Since there are no other superabundant numbers in the interval between $n'$ and $n$ (non-inclusive), we must have $\frac{\sigma(n')}{n'} \ge \frac{\sigma(n)}{n}$. Denoting $K':= \omega(n')$, this further implies that $K' > K$, because otherwise we would violate Lemma~\ref{lem:omega-extremality}. 

Now since $n'$ is superabundant, we have $\rad(n') = q_{K'}^\#$ as already mentioned in Section~\ref{sec:intro}; similarly, by Lemma~\ref{lem:min-counterex-prime-supp}, we also have $\rad(n) = q_K^\#$. This then implies that $q_K^\#$ divides $n - n'$, and in particular $n - n' \ge q_K^\#$. Let us also introduce some notation. We define $t:= \ln \ln \ln n$, $t' := \ln \ln \ln n'$, and therefore we have $t > t' > 0$. Since $n$ is the minimal counterexample to the $\omega-$inequality, we have so far $C(n) < C(n')$, or equivalently, $P_K(t) < P_{K'}(t')$, since
\begin{equation*}
    C(n) \le \frac{\sigma(n)}{n} \le \frac{\sigma(n')}{n'} < C(n').
\end{equation*}
It follows that $P_K(t) - P_K(t') < P_{K'}(t') - P_K(t') < T_K(t')$, where $T_K$ was defined in Section~\ref{ssec:background}. Moreover, as $t > t'$, and $K \ge 7$, we also have
\begin{equation*}
\begin{split}
    &P_K(t) - P_K(t') = (\ln \ln \ln n - \ln \ln \ln n') + \sum_{j=2}^{K} \frac{(\ln \ln \ln n)^j - (\ln \ln \ln n')^j}{j!} \\
    &> (\ln \ln \ln n - \ln \ln \ln n') = \int_{n'}^{n} \frac{1}{x \ln x \; \ln \ln x } \; dx > \frac{n - n'}{n \ln n \; \ln \ln n} \ge \frac{q_K^\#}{n \ln n \; \ln \ln n},
\end{split}
\end{equation*}
where in the last inequality on the second line, we used from above that $n - n' \ge q_K^\#$. Therefore, combining everything, we get that $\frac{q_K^\#}{n \ln n \; \ln \ln n} < T_K(t') < T_K(t) < E_K$, where the second-last inequality is due to the monotonicity of $T_K$, and the last inequality is by \eqref{eq:EK-tail-bound}. This contradicts Lemma~\ref{lem:primorial-tail-gap}, and thus we have proved that $n$ is superabundant.
\end{proof}
\section{Equivalence with the Riemann hypothesis}
\label{sec:equivalance}

The $\omega-$inequality trivially implies Robin's inequality, which in turn is equivalent to
Riemann hypothesis~\cite{robin1984grandes}, so it remains only to prove the converse. We give
a proof that the Riemann hypothesis implies the $\omega-$inequality using 
 the minimal counterexample results of Section~\ref{ssec:minimal-counterex-omega}. The proof uses the following
conditional bound on the divisor ratio due to Nicolas~\cite[Corollary~1.2]{nicolas2022sum}:
\begin{lemma}[Nicolas' bound under RH]
\label{lem:Nicolas-bound}
If the Riemann hypothesis is true, then for all $n > 5040$,
\begin{equation}
\label{eq:Nicolas-bound}
    \frac{\sigma(n)}{n} < e^\gamma \left( \ln \ln n - \frac{0.095}{\sqrt{\ln n}} \right).
\end{equation}
\end{lemma}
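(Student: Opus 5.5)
This statement is not new to the paper: it is exactly \cite[Corollary~1.2]{nicolas2022sum}. The honest plan is therefore to invoke that corollary as a black box, exactly as Lemma~\ref{lem:rosser-schoenfeld} is invoked. Below is how I would reconstruct it if a self-contained argument were wanted. I have not checked the explicit constants, so this is a reconstruction route rather than a verified derivation.

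\emph{Reduction to colossally abundant numbers.} Set $F(n) := \ln\ln n - 0.095/\sqrt{\ln n}$. In the variable $u = \ln n$, the function $u \mapsto \ln u - 0.095\,u^{-1/2}$ is increasing. The aim is an interpolation step in the spirit of \cite[Proposition~1]{robin1984grandes}.
\begin{itemize}
\item Take consecutive colossally abundant numbers $N < N'$ with parameter $\epsilon$.
\item Every $n \in [N, N']$ satisfies $\ln(\sigma(n)/n) \le \ln(\sigma(N)/N) + \epsilon(\ln n - \ln N)$.
\item Show that $\ln F(e^u)$ is concave in $u$, at least on the relevant range.
\item Concavity then gives: if the bound holds at $N$ and $N'$, it holds on all of $[N, N']$.
\end{itemize}
So it suffices to prove the inequality at colossally abundant numbers, plus a finite check below some explicit threshold.

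\emph{The bound at colossally abundant $N$.} Let $x$ be the largest prime factor of $N$.
\begin{itemize}
\item Write $\sigma(N)/N = \prod_{p \le x} \frac{p}{p-1} \cdot \prod_{p \mid N}\bigl(1 - p^{-(a_p+1)}\bigr)$. The correction factor is $\exp\bigl(-\sum_{p \mid N} p^{-(a_p+1)}\bigr)$ up to negligible error, and it contributes a term of size about $-c/\sqrt{x}$.
\item Assuming RH, use Nicolas' explicit form of Mertens' product: $\prod_{p\le x}\frac{p}{p-1} \le e^\gamma \ln\vartheta(x)\,\bigl(1 + O(1/(\sqrt{x}\ln x))\bigr)$, with the leading negative secondary term $-(2-\ln 4 \ldots)/\sqrt{x}$ coming from $\vartheta(x) - x$ via the explicit formula.
\item Relate $\ln\vartheta(x)$ to $\ln\ln N$. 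Since $\ln N = \vartheta(x) + \sum_{p} (a_p - 1)\ln p$, and the excess is $\sim \sqrt{x}$ under RH, we get $\ln\ln N \ge \ln\vartheta(x) + c'/\sqrt{x}$.
\item Combine the three estimates to obtain $\sigma(N)/N \le e^\gamma(\ln\ln N - c''/\sqrt{\ln N})$ with an explicit $c''$. Asymptotically one expects roughly $0.48$ in place of $0.095$; the weaker constant $0.095$ leaves room for error terms and moderate $x$.
\end{itemize}

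\emph{Finite range and main obstacle.} For $5040 < n \le$ some explicit bound, verify the inequality numerically over superabundant or colossally abundant numbers, then propagate it to all $n$ by the interpolation step above.

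The main obstacle is the second step: getting fully explicit RH-conditional error terms uniform down to small $x$. This needs explicit zero-sum bounds of the form $|\vartheta(x) - x| \le \frac{1}{8\pi}\sqrt{x}\ln^2 x$ and their analogue for $\sum 1/p$. Tracking them carefully enough that $0.095$ survives for every $N > 5040$ is the delicate bookkeeping, and it is precisely what \cite{nicolas2022sum} carries out. For the purposes of this paper, I would simply cite it.
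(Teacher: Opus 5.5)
Your proposal matches the paper: the lemma is not proved there but quoted directly from \cite[Corollary~1.2]{nicolas2022sum}, so invoking it as a black box is exactly what the paper does. The reconstruction sketch is not needed, but if you ever flesh it out, note one sign: under RH, Nicolas' primorial criterion gives $\prod_{p\le x}p/(p-1) > e^\gamma\ln\vartheta(x)$, so the secondary term there is positive, and the $-c/\sqrt{\ln N}$ saving has to come from the factor $\prod_{p}(1-p^{-(a_p+1)})$ and from the excess $\ln N-\vartheta(x)$ outweighing it.
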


The proof combines Nicolas' bound with the strong-tail estimate
of Lemma~\ref{lem:strong-tail-estimate}.

\begin{theorem}[Equivalence with RH]
\label{thm:rh-equiv-1}
The $\omega-$inequality is equivalent to the Riemann hypothesis.
\end{theorem}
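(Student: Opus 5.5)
The easy direction is immediate. Since $T_K(x)=e^x-P_K(x)>0$ for $x>0$ and $\ln\ln\ln n>0$ for $n>5040$, we have $C(n)=e^\gamma P_{\omega(n)}(\ln\ln\ln n)<e^\gamma\ln\ln n$. So the $\omega-$inequality for all $n>5040$ implies Robin's inequality for all $n>5040$. By Robin's theorem~\cite{robin1984grandes}, that in turn implies the Riemann hypothesis.

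For the converse, I will assume RH and suppose, for contradiction, that the $\omega-$inequality fails for some $n>5040$. Then there is a minimal counterexample $n$. Write $K=\omega(n)$ and $t=\ln\ln\ln n$. All the structural results of Section~\ref{sec:minimal-counterex} apply to it. In particular, Lemma~\ref{lem:NK-greater-20} gives $K\ge 21$, and Lemma~\ref{lem:strong-tail-estimate} gives the tail bound
\[
\ln\ln n-P_K(t)<\frac{6.4\times 10^{-15}}{\sqrt{\ln n}}.
\]

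Since $n$ is a counterexample, $\sigma(n)/n\ge C(n)=e^\gamma P_K(t)$. Combining with the tail bound gives
\[
\frac{\sigma(n)}{n}>e^\gamma\left(\ln\ln n-\frac{6.4\times10^{-15}}{\sqrt{\ln n}}\right).
\]
On the other hand, under RH, Lemma~\ref{lem:Nicolas-bound} gives
\[
\frac{\sigma(n)}{n}<e^\gamma\left(\ln\ln n-\frac{0.095}{\sqrt{\ln n}}\right).
\]
These two bounds are incompatible because $0.095>6.4\times10^{-15}$. This contradiction shows no counterexample exists, so RH implies the $\omega-$inequality.

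The superabundance result (Theorem~\ref{thm:omega-superabundant}) is not needed for this argument. It is used only for the colossally abundant reformulation that follows. That reformulation says RH is equivalent to the $\omega-$inequality holding for all colossally abundant numbers. One direction is immediate from the theorem. For the other, if the $\omega-$inequality holds on colossally abundant numbers, then by the easy direction Robin's inequality holds on them. Robin's Proposition~1 then extends this to all $n>5040$, which gives RH, and the theorem finishes the argument.

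The main difficulty sits upstream of this proof, in the tail estimate of Lemma~\ref{lem:strong-tail-estimate}. Its key input is $\ln n<6K\ln K$, which makes the truncation error superexponentially small in $K$ and hence much smaller than Nicolas' $0.095/\sqrt{\ln n}$ margin. With that lemma granted, the only step needing care here is that $\ln\ln\ln n>0$, so the tail is positive and the easy direction is strict. This holds since $n>5040>e^e$.
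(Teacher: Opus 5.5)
Your proof is correct and is essentially the paper's own argument: the easy direction via $C(n)<e^\gamma\ln\ln n$ and Robin's theorem, and the converse by taking a minimal counterexample and setting the tail bound $6.4\times10^{-15}/\sqrt{\ln n}$ of Lemma~\ref{lem:strong-tail-estimate} against the margin $0.095/\sqrt{\ln n}$ of Lemma~\ref{lem:Nicolas-bound} (the paper writes this as one chain of inequalities rather than as two incompatible bounds). Your closing aside about the colossally abundant reformulation is slightly inaccurate, though it does not affect the theorem: the paper's colossally abundant corollary does not use Theorem~\ref{thm:omega-superabundant} either, and Robin's Proposition~1 alone cannot reach $5040<n<55440$, because Robin's inequality fails at the colossally abundant number $5040$, so that range needs a finite check.
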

\begin{proof}
Let us assume that RH is true. For contradiction, assume that the $\omega-$inequality is false, let $n > 5040$ be the minimal counterexample to the $\omega-$inequality, and let $\omega(n)=K$. Then Nicolas' bound \eqref{eq:Nicolas-bound} holds for this $n$, and we get
\begin{equation*}
    \frac{\sigma(n)}{n} < e^\gamma \left( \ln \ln n - \frac{0.095}{\sqrt{\ln n}} \right) < e^\gamma \left( \ln \ln n - \frac{6.4 \times 10^{-15}}{\sqrt{\ln n}} \right) < e^\gamma \sum_{j=0}^{K} \frac{(\ln \ln \ln n)^j}{j!} =  C(n),
\end{equation*}
where the last inequality is by Lemma~\ref{lem:strong-tail-estimate}. This is a contradiction, which proves the theorem, as the other direction is obvious.
\end{proof}

An immediate consequence of Theorem~\ref{thm:rh-equiv-1} is the following:
\begin{corollary}
\label{cor:omega-ca-numbers}
The Riemann hypothesis is true if and only if the $\omega-$inequality holds for all colossally abundant numbers strictly greater than $5040$.
\end{corollary}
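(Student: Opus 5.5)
The corollary has two directions, and both are short consequences of Theorem~\ref{thm:rh-equiv-1} together with the structure results of Section~\ref{sec:minimal-counterex}.

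\emph{Forward direction.} Assume RH. Theorem~\ref{thm:rh-equiv-1} then gives the $\omega-$inequality for every $n>5040$. In particular it holds for every colossally abundant number exceeding $5040$.

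\emph{Converse.} Assume the $\omega-$inequality holds for all colossally abundant $n>5040$. It suffices to deduce Robin's inequality for all $n>5040$, since Robin's inequality is equivalent to RH~\cite{robin1984grandes}.

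Since $C(n)\le e^\gamma\ln\ln n$ (the partial sum of $e^t$ with $t=\ln\ln\ln n>0$ is at most $e^t$), the hypothesis gives $\sigma(N)/N<e^\gamma\ln\ln N$ for every colossally abundant $N>5040$. Now recall Robin's result~\cite[Proposition~1]{robin1984grandes}: if Robin's inequality holds at two consecutive colossally abundant numbers, it holds for every integer between them.

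To use this, list the colossally abundant numbers $N_1<N_2<\cdots$ that exceed $5040$. This sequence is infinite and unbounded. Every $n>5040$ then falls into one of two cases:
\begin{itemize}
\item $n$ lies in $[N_i,N_{i+1}]$ for some $i$, and Robin's proposition gives the inequality at $n$ directly;
\item $n$ lies in $(5040,N_1)$, the stretch before the first colossally abundant number above $5040$.
\end{itemize}

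The second case is the main point to handle. Since $5040$ is itself colossally abundant, the pair $(5040,N_1)$ is a consecutive pair. However, Robin's inequality fails at $5040$, so the proposition cannot be applied to this pair as stated. Two ways to close the gap:
\begin{itemize}
\item Check the finite interval $5040<n<N_1$ numerically. The next colossally abundant number is $N_1=55440$, so this is a small range, and it is already covered by the verification of the $\omega-$inequality up to $10^9$ in \ref{list:check-omega-n-1b}. Since the $\omega-$inequality implies Robin's inequality, this settles the interval.
\item Alternatively, invoke the form of Robin's result which says that if Robin's inequality fails for some $n>5040$, then it fails at some colossally abundant number $>5040$.
\end{itemize}

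Either route yields Robin's inequality for all $n>5040$, hence RH. Applying Theorem~\ref{thm:rh-equiv-1} then upgrades this to the full $\omega-$inequality.
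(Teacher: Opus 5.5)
Your proposal is correct and follows the paper's proof. The forward direction is Theorem~\ref{thm:rh-equiv-1} restricted to colossally abundant numbers, and the converse uses $C(n)<e^\gamma\ln\ln n$ to obtain Robin's inequality at colossally abundant numbers, then at all $n>5040$, and hence RH. Your explicit handling of the interval $(5040,55440)$, either through \ref{list:check-omega-n-1b} or through the ``fails at some colossally abundant number'' form of Robin's result, simply spells out the reduction that the paper gets by citing \cite[Lemma~7.1]{broughan2017equivalents}.
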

\begin{proof}
By Theorem~\ref{thm:rh-equiv-1}, if RH is true, then the $\omega-$inequality holds for all $n > 5040$, and hence also when $n$ is a colossally abundant number. For the other direction, if the $\omega-$inequality holds for all colossally abundant numbers, then Robin's inequality is also true for all colossally abundant numbers \cite[Lemma~7.1]{broughan2017equivalents}. But this implies that Robin's inequality is true for all $n > 5040$, and hence the Riemann hypothesis holds.
\end{proof}
\section{Discussion}

The $\omega$-inequality gives a finite-truncation criterion equivalent to Robin's inequality and hence to the Riemann hypothesis. More generally, any integer-valued truncation $A(n)$ in~\eqref{eq:existence-An} satisfying
\[
A(n)\geq\omega(n)
\]
lies between the $\omega$-inequality and Robin's inequality and is therefore again equivalent to RH; the $\pi$-inequality is one example.

The $\omega$-inequality arose from an extension of the study of inequalities in~\cite{mishra2023mathematical} and was first supported by limited numerical evidence.
The results developed here were obtained in stages. We first established the unconditional cases (primorials in Theorem~\ref{thm:primorials}, square-free integers in Corollary~\ref{cor:square-free}, and odd integers in Corollary~\ref{cor:odd-integers}) and the multiplicity bounds for a hypothetical minimal counterexample (Lemma~\ref{lem:erdos-results}), except for the final condition $a_K=1$ derived in Theorem~\ref{thm:min-counterex-last-exponent}. Proving $a_K=1$ led to a proof of equivalence with Robin's inequality~(Theorem~\ref{thm:rh-equiv-1}). The result $a_K=1$ remains important for a different reason: for each fixed $K$, it reduces the relevant search to a finite set of multiplicity tuples. This finite reduction, together with bounds for the truncated exponential that also entered the equivalence proof, is used in proving that a minimal counterexample must be superabundant.

The scale $\omega(n)$ is also close to a natural lower threshold. Indeed, replacing $\omega(n)$ by~$\omega(n)-1$ already gives infinitely many counterexamples. For $n=2^a$ one has $A(n)=\omega(n)-1=0$, so the corresponding right-hand side is $e^\gamma$, whereas
\[
\frac{\sigma(2^a)}{2^a}
=2-\frac1{2^a}>e^\gamma
\]
for every $a\geq3$. Thus every $2^a>5040$ is a counterexample. The same argument applies to
$A(n)=\lfloor c\,\omega(n)\rfloor$ for every fixed $0<c<1$, since
$A(2^a)=0$. Hence no uniform proportional reduction of the truncation order is possible, although this does not establish pointwise minimality of $\omega(n)$ among all arithmetic truncation rules. 

A further consequence of the present analysis is that, if the $\omega$-inequality fails, its minimal counterexample is superabundant. We record this in Theorem~\ref{thm:omega-superabundant}. Hence it is sufficient to verify the inequality on superabundant integers, or using Corollary~\ref{cor:omega-ca-numbers}, on the even smaller class of colossally abundant integers. This places the truncated criterion within the same extremal framework that governs maximal values of $\sigma(n)/n$, while retaining the additional parameter $K=\omega(n)$. For fixed $K$, the problem is reduced to a finite tuple of non-increasing prime multiplicities subject to the structural bounds obtained above.

These results suggest two related directions. First, one may ask how far the truncation order can be reduced while preserving equivalence with Robin's inequality, and whether there is a natural minimal arithmetic choice of $A(n)$. Second, the finite-dimensional problem at fixed $K$ may admit a sharper extremal description, for example through explicit optimisation of $\sigma(n)/n$ under size and multiplicity constraints, or through a sparse extremal sequence adapted specifically to the $\omega$-truncation. Such a reduction would go beyond the equivalence itself and may clarify the ways in which the finite-truncation formulation provides additional leverage on Robin's criterion.
\subsection*{Acknowledgments}
\label{ssec:ack}

The authors thank Subhayan Roy Moulik, Tom Oliver, Minhyong Kim and participants at the \textit{AI $\times$ Mathematics 2026} workshop at ICMS, Edinburgh, for helpful discussions. C.M. acknowledges support by the \textit{Accelerate Program for Scientific Discovery} and Turing AI grant G111021 at the Department of Computer Science and Technology, University of Cambridge. R.S. acknowledges support from the U.S. Department of Energy, Office of Science, Accelerated Research in Quantum Computing Centers, Quantum Utility through Advanced Computational Quantum Algorithms, grant no. DE-SC0025572.

\subsection*{On LLM use}
\label{ssec:ai-use}
Large language models (LLMs) were consulted asynchronously to identify typos, check calculations, discuss proof sketches, identify errors in algebra, or other redundancies in the manuscript, and any suggestions were reviewed and implemented serially and by hand if correct. 

\appendix
\section{Supporting results and other related bounds}
\label{app:supporting-results}

\subsection{Bounds on exponentials, logarithms, and truncated exponential sums}
\label{ssec:exp-log-bounds}
We collect some analytic estimates used in the main body of the paper.

\begin{lemma}[Taylor remainder]
\label{lem:taylor-remainder}
Let $f: \RR \rightarrow \RR$ be a $(k+1)$-times differentiable function on the closed interval $[a,b]$, and let $x, y \in (a,b)$ be distinct. Then there exists $\xi$ between $x,y$ such that $f(x) = \sum_{j=0}^{k} f^{(j)}(y) \frac{(x-y)^j}{j!} + f^{(k+1)}(\xi) \frac{(x-y)^{k+1}}{(k+1)!}$.  In particular, for the function $f: x \mapsto e^x$, we have for all $x > 0$ and $k \in \NN$, the estimate $e^x \left(1 - \frac{x^{k+1}}{(k+1)!} \right)< \sum_{j=0}^k \frac{x^j}{j!}$.
\end{lemma}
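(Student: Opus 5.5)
The first assertion is the standard Lagrange form of Taylor's theorem. I would derive it from Rolle's theorem. Fix $x \neq y$ in $(a,b)$ and set
\[
g(s) := f(x) - \sum_{j=0}^{k} f^{(j)}(s)\frac{(x-s)^j}{j!} - M\frac{(x-s)^{k+1}}{(k+1)!}.
\]
The constant $M$ is chosen so that $g(y)=0$. Clearly $g(x)=0$ as well.

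Differentiating in $s$, the sum telescopes, leaving
\[
g'(s) = -f^{(k+1)}(s)\frac{(x-s)^k}{k!} + M\frac{(x-s)^k}{k!}.
\]
Rolle's theorem gives a $\xi$ strictly between $x$ and $y$ with $g'(\xi)=0$. Since $\xi \ne x$, this forces $M = f^{(k+1)}(\xi)$, which is exactly the stated expansion. Differentiability of $f$ up to order $k+1$ on $[a,b]$ is all Rolle needs, since $g$ only involves $f^{(j)}$ for $j\le k+1$.

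For the exponential estimate, apply this with $f=\exp$, expansion point $y=0$, and evaluation point $x>0$, on any interval $(a,b)\ni 0,x$. This gives
\[
e^x = \sum_{j=0}^{k}\frac{x^j}{j!} + e^{\xi}\frac{x^{k+1}}{(k+1)!}
\]
for some $\xi\in(0,x)$. Because $\xi<x$ and $x^{k+1}>0$, the remainder is strictly less than $e^{x}\frac{x^{k+1}}{(k+1)!}$. Rearranging yields
\[
e^x\left(1-\frac{x^{k+1}}{(k+1)!}\right) < \sum_{j=0}^k \frac{x^j}{j!},
\]
which is the claim.

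The inequality is strict by the strict monotonicity of $\exp$ together with $x>0$. No step is a real obstacle. The only care needed is that $\xi$ lies strictly between $0$ and $x$, which Rolle provides, and this gives strictness. Note also that the bound is trivially true when $x^{k+1}\ge (k+1)!$, since the left side is then nonpositive, but the argument above does not need a case split.
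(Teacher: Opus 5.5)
Your proof is correct. The Rolle argument with the auxiliary function $g$ is the standard derivation of the Lagrange remainder, and the exponential bound follows exactly as you describe from $e^{\xi}<e^{x}$ for $\xi\in(0,x)$. The paper states this lemma without proof, treating it as the textbook Taylor theorem, so your argument supplies the justification it omits.
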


\begin{lemma}[Log-bounds]
\label{lem:log-bounds}
The following statements are true:
\begin{enumerate}[label={(\alph*)}]
    \item \label{lg:1}$\ln(1-x) + \frac{x}{1-y} > 0$, for all $0 < x \le y < 1$.
    \item \label{lg:2} $\ln (1 + x) < x$, for all $0 \neq x > -1$.
    \item \label{lg:3} $\ln \left( \frac{x}{y} \right) > \frac{x-y}{x}$, for all $x > y > 0$.
\end{enumerate}
\end{lemma}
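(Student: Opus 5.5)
The three parts of Lemma~\ref{lem:log-bounds} are elementary one-variable facts. I would prove each by studying an auxiliary function on the relevant interval: check its value at an endpoint, and show its derivative has a fixed sign.

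For \ref{lg:2}, set $g(x) := x - \ln(1+x)$ on $(-1,\infty)$. Then $g(0)=0$ and $g'(x) = 1 - \frac{1}{1+x} = \frac{x}{1+x}$. This is negative on $(-1,0)$ and positive on $(0,\infty)$, so $0$ is the unique strict global minimum of $g$. Hence $g(x)>0$ for $x\neq 0$, which is the claim.

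For \ref{lg:1}, fix $y \in (0,1)$ and set $h(x) := \ln(1-x) + \frac{x}{1-y}$ on $[0,y]$. Then $h(0)=0$ and
\[
h'(x) = \frac{1}{1-y} - \frac{1}{1-x} \ge 0 \quad \text{for } 0\le x\le y,
\]
with equality only at $x=y$. So $h$ is strictly increasing on $[0,y]$, and $h(x)>0$ for $0<x\le y$. As an alternative check, \ref{lg:2} applied with argument $\frac{x}{1-x}$ gives $-\ln(1-x) = \ln\!\left(1+\frac{x}{1-x}\right) < \frac{x}{1-x} \le \frac{x}{1-y}$, which is the same statement.

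For \ref{lg:3}, write $\ln\frac{x}{y} = -\ln\!\left(1 - \frac{x-y}{x}\right)$ and put $u := \frac{x-y}{x} \in (0,1)$. The claim becomes $-\ln(1-u) > u$, i.e.\ $\ln(1-u) < -u$. This is \ref{lg:2} applied to $-u \in (-1,0)$, which is nonzero.

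None of these steps is a real obstacle. The only points needing care are the strictness of the inequalities and keeping the arguments inside the stated domains. Specifically, for \ref{lg:3} one must check that $-u\neq 0$ and $-u>-1$, and for \ref{lg:1} that $x\le y<1$ keeps $\ln(1-x)$ defined and the derivative sign correct up to the endpoint $x=y$.
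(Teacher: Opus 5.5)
Your proof is correct and follows essentially the same route as the paper: the same auxiliary function $\ln(1-x)+\frac{x}{1-y}$ with derivative $\frac{y-x}{(1-y)(1-x)}$ for (a), and (c) obtained by applying (b) to $\frac{y-x}{x}\in(-1,0)$. The only cosmetic difference is in (b), where the paper takes logarithms in $e^x>1+x$ for $x\neq 0$ instead of analysing $x-\ln(1+x)$ through its derivative.
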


\begin{proof}
(a) Let $f_y(x) := \ln(1-x) + \frac{x}{1-y}$. Then we have $f_y(0) = 0$, and $f'_y(x) = \frac{y-x}{(1-y)(1-x)} > 0$, for $x < y$. (b) Follows from $e^x > 1+x$, when $x \neq 0$. (c) We apply (b) to $\ln \left( \frac{y}{x}\right)$, noting that $\frac{y-x}{x} > -1$, which gives $\ln \left( \frac{y}{x}\right) = \ln \left( 1 + \frac{y-x}{x}\right) < \frac{y-x}{x}$.
\end{proof}

\begin{lemma}[Truncated-exponential ratio bound]
\label{lem:truncated-exponential-ratio}
Let $x > y > 0$. Then for every $N \in \mathbb{N}$,
\begin{equation*}
\frac{P_N(x)}{P_N(y)} > 1 + \frac{x - y}{1 + y/N}.
\end{equation*}
\end{lemma}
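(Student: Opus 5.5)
We have to prove: for $x>y>0$ and $N\ge 1$, $P_N(x)/P_N(y) > 1 + (x-y)/(1+y/N)$. Equivalently, $P_N(x)-P_N(y) > (x-y)\,P_N(y)/(1+y/N)$.

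The plan is to bound the increment $P_N(x)-P_N(y)$ from below via the mean value theorem. Since $P_N' = P_{N-1}$ and $P_{N-1}$ is strictly increasing on $[0,\infty)$ (for $N\ge 2$; for $N=1$ it is constant equal to $1$), we get
\begin{equation*}
P_N(x)-P_N(y) = \int_y^x P_{N-1}(s)\,ds \ge (x-y)\,P_{N-1}(y),
\end{equation*}
with strict inequality when $N\ge 2$. It therefore suffices to prove the pointwise comparison
\begin{equation*}
P_{N-1}(y)\left(1+\frac{y}{N}\right) \ge P_N(y), \qquad y>0,
\end{equation*}
with strict inequality in the case $N=1$, so that the overall inequality is strict in every case.

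For the comparison, expand $P_{N-1}(y)(1+y/N) = P_{N-1}(y) + \sum_{j=0}^{N-1} \frac{y^{j+1}}{N\, j!}$. Subtracting $P_N(y) = P_{N-1}(y) + \frac{y^N}{N!}$, the comparison reduces to
\begin{equation*}
\sum_{j=0}^{N-1}\frac{y^{j+1}}{N\,j!} \ge \frac{y^N}{N!}.
\end{equation*}
The $j=N-1$ term of the left side is exactly $\frac{y^N}{N\,(N-1)!} = \frac{y^N}{N!}$. The remaining terms ($j\le N-2$) are nonnegative, and strictly positive when $N\ge 2$ and $y>0$. When $N=1$ the comparison is an equality, $1\cdot(1+y) = 1+y$, but in that case $P_1(x)/P_1(y) = 1+(x-y)/(1+y)$ exactly. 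So the stated strict inequality would need $N\ge 2$, or else must be read as non-strict at $N=1$.

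The main obstacle, mild as it is, is this strictness bookkeeping at $N=1$. I would state the lemma with strict inequality for $N\ge 2$, where both the integral bound and the comparison are strict, and note equality at $N=1$. This is harmless for the application in Theorem~\ref{thm:min-counterex-last-exponent}, where $N=K\ge 7$. Combining the two steps gives $P_N(x)/P_N(y) - 1 \ge (x-y)P_{N-1}(y)/P_N(y) \ge (x-y)/(1+y/N)$, with strictness for $N\ge 2$.
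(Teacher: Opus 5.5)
Your proof is correct and follows the same route as the paper. The paper first uses the fundamental-theorem-of-calculus bound $P_N(x)-P_N(y)\ge (x-y)P_{N-1}(y)$, then the comparison $P_{N-1}(y)(1+y/N)\ge P_N(y)$, which it writes as $P_{N-1}(y)\ge y^{N-1}/(N-1)!$. You are also right that $N=1$ gives exact equality, a case the paper's proof overlooks: its strict step relies on monotonicity of $P_0\equiv 1$. As you note, this is harmless, since the lemma is only applied with $N=K\ge 7$.
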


\begin{proof}
Fix $N \in \mathbb{N}$. We have $P_N(x) - P_N(y) = \int_{y}^{x} P_{N-1}(u) \;du > (x - y) P_{N-1}(y)$ by the fundamental theorem of calculus, where in the last inequality we have used the monotonicity of $P_{N-1}$. This immediately gives the bound
\begin{equation*}
\begin{split}
    \frac{P_N(x)}{P_N(y)} - 1 &=  \frac{P_N(x) - P_N(y)}{P_N(y)} > (x - y) \frac{P_{N-1}(y)}{P_N(y)} = (x - y) \left( 1 + \frac{y^{N}}{P_{N-1}(y) N!} \right)^{-1} \\
    & \ge (x - y) \left( 1 + \frac{y^{N} (N-1)!}{y^{N-1} N!} \right)^{-1} = (x - y) \left( 1 + \frac{y}{N} \right)^{-1},
\end{split}
\end{equation*}
where we used the fact that $P_{N-1}(y) \ge \frac{y^{N-1}}{(N-1)!}$ in the second line.
\end{proof}

\subsection{Additional multiplicity bounds}
\label{ssec:additional-multiplicity-bounds}
We record some further multiplicity bounds, valid for the minimal counterexample to the $\omega-$inequality and for superabundant numbers.

\vspace{0.3cm}
\begin{lemma}[Multiplicity upper bound]
\label{lem:upper-bound}
Let $n = q_1^{a_1} \dots q_K^{a_K} >5040 $ be either the minimal counterexample to the $\omega-$inequality or a superabundant number with $\omega(n)\ge 7$. Then \[q_{i}^{a_i} \le 2^{a_1+2}\Xi(q_i) - \Delta(q_i) < 2^{a_1 + 2},\] for all $2 \le i \le K$, where $\Xi(q):=\frac{1-\frac1q}{2-2^{1-s_q}}$,
$\Delta(q):=\frac{1-\frac{2^{s_q}}q}{2^{s_q}-1}$, and $s_{q}:=\lfloor \log_2(q-1)\rfloor$. The first inequality is strict in the superabundant case.
\end{lemma}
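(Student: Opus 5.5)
The plan is a single exchange argument, the same one used for Lemma~\ref{lem:erdos-results}(b). Fix $2 \le i \le K$ and write $q := q_i$, $a := a_i$, $s := s_q = \lfloor \log_2(q-1) \rfloor$. Since $q \ge 3$ we have $s \ge 1$, and $2^s \le q-1 < q$. I would trade one factor $q$ in $n$ for the factor $2^s$, setting
\[
n' := \frac{2^{s}\, n}{q}.
\]

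First I check that $n'$ is an admissible competitor.
\begin{itemize}
\item $n' < n$, because $2^s < q$.
\item $\omega(n') \le \omega(n)$: the prime $q$ disappears exactly when $a = 1$, and no new prime is introduced.
\item $n' > e^e$, with room to spare. Since $2^{s+1} > q-1$, we get $n' > n(q-1)/(2q) \ge n/3$. Moreover $n \ge \prim{7}$ because $\omega(n) \ge 7$.
\end{itemize}

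Next I compare divisor ratios.
\begin{itemize}
\item If $n$ is superabundant, the definition gives $\sigma(n')/n' < \sigma(n)/n$ directly.
\item If $n$ is the minimal counterexample, Lemma~\ref{lem:omega-extremality} gives the same strict inequality, since $e^e < n' < n$ and $\omega(n') \le \omega(n)$.
\end{itemize}

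By \eqref{eq:sigma-n}, only the factors at $2$ and at $q$ change. Put $X := q^{a}$ and $M := 2^{a_1+1}$. The inequality $\sigma(n)/n \ge \sigma(n')/n'$ then reads
\[
\frac{(M-1)\,2^{s}}{2^{s}M - 1}\cdot \frac{qX - 1}{q(X-1)} \;\ge\; 1 .
\]
Cross-multiplying gives $(qX-1)(M-1)2^s \ge q(X-1)(2^sM-1)$. After expanding, the $qXM2^s$ terms cancel, and the inequality is linear in $X$:
\[
qX(2^s - 1) \;\le\; 2^s M (q-1) - (q - 2^s).
\]
Dividing by $q(2^s-1) > 0$ gives
\[
X \le \frac{2^{a_1+1}\,2^s(1 - 1/q)}{2^s - 1} - \frac{1 - 2^s/q}{2^s - 1}.
\]
Since $\frac{2^{a_1+1}2^s}{2^s-1} = \frac{2^{a_1+2}}{2 - 2^{1-s}}$, the right-hand side is exactly $2^{a_1+2}\Xi(q) - \Delta(q)$. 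The comparison of ratios is strict in both cases, so this bound is strict as well; in particular it is strict in the superabundant case, as the statement asserts.

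For the second inequality, two facts suffice.
\begin{itemize}
\item $\Delta(q) \ge 0$, because $2^s < q$.
\item $\Xi(q) < 1$, which is equivalent to $2^{1-s} < 1 + 1/q$. This holds for every $s \ge 1$.
\end{itemize}
Together these give $2^{a_1+2}\Xi(q) - \Delta(q) < 2^{a_1+2}$.

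The only step needing care is the bookkeeping in the cross-multiplication, so that the constants come out as exactly $\Xi$ and $\Delta$. That is routine algebra, and I expect no real obstacle. The admissibility of $n'$ (size above $e^e$, and not increasing $\omega$) is immediate from $\omega(n) \ge 7$.
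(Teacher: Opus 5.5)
Your proposal is correct and follows the paper's proof. You use the same competitor $n' = 2^{s_{q_i}} n/q_i$, the same appeal to superabundance or to Lemma~\ref{lem:omega-extremality}, and the same rearrangement (linear in $q_i^{a_i}$) giving exactly $2^{a_1+2}\Xi(q_i)-\Delta(q_i)$; your explicit cross-multiplication supplies the ``calculation'' the paper leaves implicit, and since Lemma~\ref{lem:omega-extremality} is strict you also get strictness in the minimal-counterexample case, where the paper only records $\le$.
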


\begin{proof}
If $n$ is the minimal counterexample to the $\omega-$inequality, then by Lemma~\ref{lem:six-primes}, $\omega(n) \ge 7$. Thus $q_i$ must be an odd prime for any $2 \le i \le K$, which gives $1 \le s_{q_i} =\lfloor \log_2({q_i}-1)\rfloor= \lfloor \log_2(q_i)\rfloor$. This implies $2 \le 2^{s_{q_i}} < q_i$, and  it follows that $\Xi(q_i) < 1$ and $\Delta(q_i) > 0$; so the inequality $2^{a_1+2}\Xi(q_i) - \Delta(q_i) < 2^{a_1 + 2}$ is clear.

Now  set \(n':=n\Big(\frac{2^{s_{q_i}}}{q_i}\Big),\) so we have \(n'<n\), and we note $\omega(n')\ge 6$, which implies $n' \ge 30030$. If $n$ is superabundant then $\frac{\sigma(n')}{n'} < \frac{\sigma(n)}{n}$, while $\frac{\sigma(n')}{n'} \le \frac{\sigma(n)}{n}$ if $n$ is the minimal counterexample to the $\omega-$inequality, by Lemma~\ref{lem:omega-extremality}. 
A calculation shows that the inequality
\[
\frac{\sigma(n')/n'}{\sigma(n)/n}
=
\left(\frac{1-2^{-(a_1+s+1)}}{1-2^{-(a_1+1)}} \right)
\left(\frac{1-q_i^{-a_i}}{1-q_i^{-(a_i+1)}}\right) \le 1
\]
is true if and only if the inequality $q_i^{a_i} \le 2^{a_1+2}\Xi(q_i)-\Delta(q_i)$ is true as well, with the same equivalence also holding when both the inequalities are strict. This proves the lemma.
\end{proof}

\begin{lemma}[Largest abundant index]
\label{lem:J-bounds}
Let $n>5040$ be either the minimal counterexample to the $\omega-$inequality or a superabundant number with $\omega(n)\ge 7$, with prime factorization $n = q_1^{a_1} \dots q_K^{a_K}$. Denote the index of the largest prime with non unit multiplicity as $J:=\max\{j: a_j\ge 2\}$, such that $a_J\ge 2$, and $a_{J+1}=\cdots=a_K=1$. Then,
\[
\pi\!\left(2^{a_1/3}\right)-1 \;<\; J \; \le \; \pi\!\left(2^{a_1/2+1}\right).
\]
\end{lemma}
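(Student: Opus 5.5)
Both bounds will come from the multiplicity estimates already available: Lemma~\ref{lem:upper-bound} for the upper bound, and part (a) of Lemma~\ref{lem:erdos-results} (or its superabundant analogue, Alaoglu--Erd\H{o}s \cite{alaoglu1944highly}, \cite[Lemma~6.2]{broughan2017equivalents}) for the lower bound. In both cases the hypotheses ($\omega(n)\ge 7$, and $\rad(n)=\prim{K}$ with non-increasing exponents) are supplied by Lemmas~\ref{lem:six-primes}, \ref{lem:min-counterex-prime-supp}, \ref{lem:min-counterex-exponents-nondecreasing} for the minimal counterexample, and by the classical properties for superabundant $n$.

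\emph{Upper bound.} By definition $a_J\ge 2$. If $J=1$ the bound $J\le\pi(2^{a_1/2+1})$ is trivial, since $a_1\ge 2$ gives $2^{a_1/2+1}\ge 4$ and $\pi(4)=2$. Otherwise $J\ge 2$, and Lemma~\ref{lem:upper-bound} gives $q_J^2\le q_J^{a_J}<2^{a_1+2}$. Hence $q_J<2^{a_1/2+1}$, so $J=\pi(q_J)\le\pi(2^{a_1/2+1})$.

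\emph{Lower bound.} Take any index $j$ with $q_j^3<2^{a_1}$, i.e.\ $q_j<2^{a_1/3}$, and $j\ge 2$. I will show $a_j\ge 2$. Apply part (a) of Lemma~\ref{lem:erdos-results} with $i=1$: it gives $a_j\ge\lfloor a_1\ln 2/\ln q_j\rfloor-1$. Since $a_1\ln 2/\ln q_j>3$, the floor is at least $3$, so $a_j\ge 2$. Therefore every prime below $2^{a_1/3}$ carries multiplicity at least $2$. Because exponents are non-increasing, $J\ge\pi(2^{a_1/3})$ whenever that prime set is nonempty, and if it is empty the bound holds trivially since $J\ge 1>0-1$. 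This gives $J>\pi(2^{a_1/3})-1$.

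\emph{Cases to check.} If $a_j\ge 2$ for all $j$, then $J=K$. The inequality is then still consistent, since any prime $q<2^{a_1/3}$ is still at most $q_K$; in this case Theorem~\ref{thm:min-counterex-last-exponent} and Alaoglu--Erd\H{o}s in fact force $a_K=1$, so $J<K$. I also need to check that the inequality in part (a) is valid for the superabundant case. For that I will cite the Broughan lemma directly.

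The main obstacle is the strictness and off-by-one bookkeeping. The floor can equal exactly $3$ only when $a_1\ln2/\ln q_j\ge3$, and I will use strictness from primality: $q_j^3\neq 2^{a_1}$. The stated strict inequality $\pi(2^{a_1/3})-1<J$ is then just $J\ge\pi(2^{a_1/3})$ rewritten.
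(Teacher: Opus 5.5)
Your approach matches the paper's: Lemma~\ref{lem:upper-bound} for the upper bound, and Lemma~\ref{lem:erdos-results}(a) with $i=1$ for the lower bound. Two small differences work in your favour. You dispose of $J=1$ explicitly, which matters because Lemma~\ref{lem:upper-bound} only covers $i\ge 2$. You also treat the minimal counterexample directly through its own multiplicity lemmas and Theorem~\ref{thm:min-counterex-last-exponent}, whereas the paper first reduces to the superabundant case via Theorem~\ref{thm:omega-superabundant}.

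However, the lower bound as written rests on an unproved step. Lemma~\ref{lem:erdos-results}(a) only speaks about primes dividing $n$, so what you actually show is $a_j\ge 2$ for $2\le j\le K$ with $q_j<2^{a_1/3}$. To conclude $J\ge\pi(2^{a_1/3})$ you also need every prime $\le 2^{a_1/3}$ to divide $n$, i.e.\ $\pi(2^{a_1/3})\le K$. If this failed, then $J\le K<\pi(2^{a_1/3})$ and the claimed bound would be false. You state it only in passing (``any prime $q<2^{a_1/3}$ is still at most $q_K$''), with no justification, and only in the case $J=K$, although your main argument needs it in every case.

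It is true and easy to supply from what you already cite. Since $K\ge 7$ and $a_K=1$, applying your own claim at $j=K$ gives $q_K>2^{a_1/3}$, so all primes $\le 2^{a_1/3}$ lie among $q_1,\dots,q_{K-1}$. The paper sidesteps the issue by applying (a) at the single index $j=J+1\le K$. There $a_{J+1}=1$ gives $\lfloor a_1\ln 2/\ln q_{J+1}\rfloor\le 2$, hence $q_{J+1}>2^{a_1/3}$, and $\pi(2^{a_1/3})\le J$ follows at once.

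You also take for granted that $\{j: a_j\ge 2\}$ is nonempty, so that $J$ exists. The paper proves this: if $a_1=1$, then non-increasing exponents and $\rad(n)=\prim{K}$ force $n=\prim{K}$. Lemma~\ref{lem:upper-bound} with $i=K$ then gives $q_K<2^{3}=8$, contradicting $K\ge 7$. For the minimal counterexample it is also immediate from Corollary~\ref{cor:square-free}, since $n$ cannot be square-free.
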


\begin{proof}
We have already proved that the minimal counterexample to the $\omega-$inequality is a superabundant number in Theorem~\ref{thm:omega-superabundant}, so we just prove the lemma assuming $n$ is superabundant. Note that all superabundant numbers greater than $30$ are divisible by $4$. If this is not the case (i.e. if $a_1=1$), then $n$ must be the primorial number $q_K^\#$, by the properties listed in Section~\ref{ssec:background}. As $n > 36$ and $q_4^\# = 210$ is not superabundant, this also implies $K > 4$, or $q_K \ge 11$, and moreover, $a_K=1$ by \cite[Theorem 3]{alaoglu1944highly}. Then Lemma~\ref{lem:upper-bound} gives $q_K < 8$, which is a contradiction. Thus the set $\{j: a_j \ge 2\}$ is non-empty.

Next, as $a_K=1$, and the multiplicity exponents are non-increasing (i.e. $a_1 \ge a_2 \ge \cdots \ge a_K$), using \cite[Theorem 1]{alaoglu1944highly}, we have $1 \le J \le K-1$. From this, it follows by Lemma~\ref{lem:upper-bound}, that $q_J^2\le q_J^{a_J}<4\cdot 2^{a_1}$, and therefore $q_J<2^{a_1/2+1}$. Since $\pi$ is a non-decreasing function, this implies the upper bound $J \le \pi\!\left(2^{a_1/2+1}\right)$. For the lower bound, we use Lemma~\ref{lem:erdos-results}(a) and  $a_{J+1}=1$, which gives $\left\lfloor \frac{a_1 \ln 2}{\ln q_{J+1}}\right\rfloor\le 2$. Hence, $a_1\frac{\ln 2}{\ln q_{J+1}}<3$, or equivalently $q_{J+1}>2^{a_1/3}$. Again by monotonicity of $\pi$, we conclude $J+1 > \pi(2^{a_1/3})$, completing the proof.
\end{proof}

\begin{lemma}[Multiplicity bound using $q_{K+1}$]
\label{lemma:mult_boundSA2}
Let $n = q_1^{a_1} \dots q_K^{a_K}$ be a superabundant number, let $1 \le j \le K$, and let $\alpha \in \NN$. If  $q_j^\alpha>q_{K+1}+1$, then $a_j\le 2(\alpha-1)$.
\end{lemma}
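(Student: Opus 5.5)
The plan is a direct exchange argument in the style of Alaoglu and Erd\H{o}s. Suppose, for contradiction, that $a_j \ge 2\alpha - 1$. Write $Q := q_{K+1}$ and $q := q_j$. Since $\alpha \ge 1$, we have $a_j \ge 2\alpha-1 \ge \alpha$. So the integer
\[
n' := \frac{n\,Q}{q^{\alpha}}
\]
is well defined. The hypothesis $q^\alpha > Q+1 > Q$ gives $n' < n$. The goal is to show $\sigma(n')/n' > \sigma(n)/n$, which contradicts the superabundance of $n$.

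First I compute the ratio using \eqref{eq:sigma-n}. The prime $Q$ does not divide $n$, because $\rad(n) = \prim{K}$ for superabundant $n$. Only the factor at $q$ changes, and a new factor $1 + 1/Q$ appears:
\[
\frac{\sigma(n')/n'}{\sigma(n)/n} = \left(1+\frac1Q\right)\frac{1 - q^{-(a_j-\alpha+1)}}{1 - q^{-(a_j+1)}}.
\]
The denominator is less than $1$. It therefore suffices to show $\left(1+\frac1Q\right)\left(1-q^{-(a_j-\alpha+1)}\right) \ge 1$. After expanding, this is equivalent to $q^{a_j-\alpha+1} \ge Q+1$.

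Next I use the assumption $a_j \ge 2\alpha - 1$. It gives $a_j - \alpha + 1 \ge \alpha$, hence $q^{a_j-\alpha+1} \ge q^{\alpha} > Q+1$. The required inequality holds, and the overall ratio is in fact strictly greater than $1$. So $n' < n$ with $\sigma(n')/n' > \sigma(n)/n$, contradicting the definition of a superabundant number. Therefore $a_j \le 2(\alpha-1)$.

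There is no serious obstacle. The only points needing care are the integrality of $n'$, which needs $a_j \ge \alpha$ and is automatic under the contrary assumption, and the fact that $q_{K+1} \nmid n$, so the new factor is exactly $1 + 1/q_{K+1}$.
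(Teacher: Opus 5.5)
Your proof is correct and uses the same exchange $n' = n\,q_{K+1}/q_j^{\alpha}$ and the same ratio computation as the paper. The only difference is cosmetic: you check directly that the ratio exceeds $1$ through the sufficient condition $q_j^{a_j-\alpha+1}\ge q_{K+1}+1$, whereas the paper reaches the same inequality through the auxiliary quantity $t=\lfloor\log_{q_j}(1+q_{K+1})\rfloor$.
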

\begin{proof}
Suppose $q_j^\alpha>q_{K+1}+1$, so we may assume $\alpha \ge 2$ (as $q_j \le q_K < q_{K+1}$), and for contradiction assume $a_j > 2(\alpha - 1) > \alpha$. We construct a smaller integer $n'$ with a greater divisor ratio; let $n' := n\frac{q_{K+1}}{q_j^{\alpha}} < n$. We compute the ratio $\frac{\sigma(n')/n'}{\sigma(n)/n}$:
\begin{align*}
\frac{\sigma(n')/n'}{\sigma(n)/n}
&=
\frac{\sigma(q_j^{a_j-\alpha})/q_j^{a_j-\alpha}}{\sigma(q_j^{a_j})/q_j^{a_j}}
\cdot
\frac{\sigma(q_{K+1})}{q_{K+1}} = 
\frac{q_j^{a_j-\alpha+1}-1}{q_j^{a_j+1}-1}\cdot q_j^{\alpha}\cdot\Bigl(1+\frac{1}{q_{K+1}}\Bigr).
\end{align*}
Since $n$ is superabundant, we must have $\frac{\sigma(n')/n'}{\sigma(n)/n} < 1$, which implies $q_j^{a_j+1}-q_j^{\alpha} < q_{K+1}\bigl(q_j^{\alpha}-1\bigr)$. 
Define $t:=\lfloor\log _{q_j}(1+q_{K+1})\rfloor$; thus $t \le \log _{q_j}(1+q_{K+1})< t+1$, or equivalently, $q_j^{t}\le 1+q_{K+1}<q_j^{t+1}$.  We claim that $a_j < \alpha + t$. To see this, assume otherwise that $a_j \ge \alpha + t$. Then $q_j^{a_j-\alpha+1}-1\ge q_j^{t+1}-1> q_{K+1}$, or equivalently, $q_j^{a_j+1}-q_j^\alpha > q_{K+1} q_j^\alpha >  q_{K+1}(q_j^\alpha-1)$, which cannot be true, proving the claim. Moreover, we further have $\lfloor\log _{q_j}(1+q_{K+1})\rfloor\le \alpha-1$, since $q_j^\alpha>q_{K+1}+1$, and since $\alpha$ is an integer. Therefore, combining we get,
\begin{equation*}
a_j < \alpha + t = \alpha + \lfloor\log _{q_j}(1+q_{K+1})\rfloor \le 2 \alpha - 1,
\end{equation*}
from which it follows that $a_j \le 2(\alpha - 1)$, since both sides are integers.
\end{proof}

For the above lemma, note the special case $\alpha =2$, which forces $a_j\in\{1,2\}$ whenever $q_j^2 > q_{K+1}+1$.

\section{Numerical data and list of finite checks}
\label{app:finite-checks}
We list the finite computations used in the proofs together with the
code required to reproduce them. Table~\ref{tab:finite-checks} summarizes their usage.

\begin{enumerate}[label=(L.\arabic*)]
    \item \label{list:power-three-primes} \textit{Arbitrary powers of at most three primes}. With $r_K$ as defined in proof of Lemma~\ref{lem:three-primes}, we have $r_K = 2$, $3$, $3.75$, and the computed values of $e^\gamma \sum_{k=0}^{K} \frac{0.7622^k}{k!}$ are $3.138$, $3.656$, $3.787$, for $K=1,2,3$ respectively.
    \item \label{list:power-six-primes} \textit{Arbitrary powers of at most six primes}. In proof of Lemma~\ref{lem:six-primes}, we have $r_K = 4.375$, $4.8125$, $5.2135$, and the computed values of $e^\gamma \sum_{k=0}^{K} \frac{1.076^k}{k!}$ are $5.201$, $5.223$, $5.227$, for $K=4,5,6$ respectively.
    \item \label{list:check-omega-n-1b} \textit{Check $\omega-$inequality up to $n \le 10^9$}. The following \textsc{Mathematica} code (v15.0) was used to verify this for every $n\in (5040,10^9]$:
    \begin{verbatim}
DivisorSigma[1, n]/(Exp[EulerGamma] n) <
  Sum[(Log[Log[Log[n]]])^j/j!, {j, 0, PrimeNu[n]}]
\end{verbatim}
    \item \label{list:check-pi-n-1b} \textit{Check $\pi-$inequality for $5041\le n \le 10080$}. The following \textsc{Mathematica} code (v15.0) was used to verify this for every $n\in (5040,10080]$:
\begin{verbatim}
DivisorSigma[1, n]/(Exp[EulerGamma] n) < 
  Sum[(Log[Log[Log[n]]])^j/j!, {j, 0, PrimePi[n]}]
\end{verbatim}
\item \label{list:check-to-K20}\textit{Check $\omega-$inequality on minimal counterexample candidates with $\omega(n) \le 20$}. \textsc{Mathematica} code (v15.0) was used to verify this for every $n$ with $7\le \omega(n) \le 20$. 
\item \label{list:num_check_K21_50}\textit{Check \(e^{Y_K} + Y_K + \ln Y_K + \ln E_K < \ln q_K^\# = \vartheta(q_K)\) for $21\le K<103$ in  Lemma~\ref{lem:primorial-tail-gap}}. The following \textsc{Mathematica} code (v15.0) was used to conduct the check:
\begin{flushleft}
\small\ttfamily
check[$\omega$\_] := Module[\{$\theta$, $\tau$, E, Y, k\},\\
\hspace*{1em}$\theta$[k\_] := Total[Log[Prime[\#]] \& /@ Range[k]];\\
\hspace*{1em}$\tau$[k\_] := Log[Log[6 k Log[k]]];\\
\hspace*{1em}E[k\_] :=
  ($\tau$[k]\textasciicircum(k + 1)/(k + 1)!)/
  (1 - $\tau$[k]/(k + 2));\\
\hspace*{1em}Y[k\_] := E[k] +
  Exp[-EulerGamma] Times @@\\
\hspace*{2em}(Prime[\#]/(Prime[\#] - 1) \& /@ Range[k]);\\[2pt]
\hspace*{1em}N[Exp[Y[$\omega$]] + Y[$\omega$] + Log[E[$\omega$]] +
  Log[Y[$\omega$]] $<$ $\theta$[$\omega$], 80]];\\[2pt]
check /@ Range[21, 102]
\end{flushleft}
\item \label{list:primorial_check} \textit{Check $\omega-$inequality for some primorials}. \textsc{Mathematica} code (v15.0) was used to verify this for all $n=q_K^\#$ with $4\le K< 13$
\begin{verbatim}
Check[n_] := DivisorSigma[1, n]/(Exp[EulerGamma] n) < 
    Sum[(Log[Log[Log[n]]])^j/j!, {j, 0, PrimeNu[n]}];
Primorial[k_] := Times @@ (Prime /@ Range[k]);
ns = Primorial[#] & /@ Range[4, 12];
Check /@ ns
\end{verbatim}
\end{enumerate}

\begin{table}[h]
\centering
\begin{tabular}{|l|c|c|}
\hline
\textbf{Claim number} & \textbf{Program name} & \textbf{Usage} \\
\hline
\ref{list:power-three-primes} Powers of at most 3 primes & threePrimes.py & Lemma~\ref{lem:three-primes} \\
\hline
\ref{list:power-six-primes} Powers of at most 6 primes & sixPrimes.py & Lemma~\ref{lem:six-primes} \\
\hline
\ref{list:check-omega-n-1b} Check $\omega-$inequality ($n \le 10^9$) & see code above & Lemma~\ref{lem:six-primes} \\
\hline
\ref{list:check-pi-n-1b} Check $\pi-$inequality ($5041\le n \le 10080$) & see code above  & Lemma~\ref{lem:pi-extremality} \\
\hline
\ref{list:check-to-K20} Check $\omega-$inequality minimal counterexamples & checkToK20.nb  & Lemma~\ref{lem:NK-greater-20} \\
\hline
\ref{list:num_check_K21_50} Check Equation~\eqref{eq:primorial-tail-gap-proof-1} for $21\le K<103$ & see code above  & Lemma~\ref{lem:primorial-tail-gap} \\
\hline
\ref{list:primorial_check} Check $\omega-$inequality for  primorials & see code above  & Theorem~\ref{thm:primorials} \\
\hline
\end{tabular}
\caption{Numerical claims and corresponding program names for reproducibility, and first usage. The actual codes are available at this \href{https://github.com/rsarkar-github/prime-numb3rs/tree/main/paper/scripts}{GitHub} link.}
\label{tab:finite-checks}
\end{table}

\bibliographystyle{amsalpha}
\bibliography{bibliography.bib}

@article{mishra2023mathematical,
  title={Mathematical conjecture generation using machine intelligence},
  author={Mishra, Challenger and Moulik, Subhayan Roy and Sarkar, Rahul},
  journal={arXiv preprint arXiv:2306.07277},
  year={2023}
}

@article{ramanujan1919proof,
  title={\href{https://ramanujan.sirinudi.org/Volumes/published/ram24.html}{A proof of {B}ertrand's postulate}},
  author={Ramanujan, Srinivasa},
  journal={Journal of the Indian Mathematical Society},
  volume={11},
  pages={181--182},
  year={1919}
}

@article{alaoglu1944highly,
  title={\href{https://doi.org/10.1090/S0002-9947-1944-0011087-2}{On highly composite and similar numbers}},
  author={Alaoglu, Leonidas and Erd{\"o}s, Paul},
  journal={Transactions of the American Mathematical Society},
  volume={56},
  number={3},
  pages={448--469},
  year={1944},
  publisher={JSTOR}
}

@misc{OEIS_A067698,
  author       = {{OEIS Foundation Inc.}},
  title        = {The online encyclopedia of integer sequences},
  howpublished = {\url{https://oeis.org/A067698}},
  year         = {Sequence A067698},
}

@article{akbary2009superabundant,
  title={\href{https://doi.org/10.4169/193009709X470128}{Superabundant numbers and the {R}iemann hypothesis}},
  author={Akbary, Amir and Friggstad, Zachary},
  journal={The American Mathematical Monthly},
  volume={116},
  number={3},
  pages={273--275},
  year={2009},
  publisher={Taylor \& Francis}
}

@article{ramanujan1915highly,
  title={\href{https://londmathsoc.onlinelibrary.wiley.com/doi/10.1112/plms/s2_14.1.347}{Highly composite numbers}},
  author={Ramanujan, Srinivasa},
  journal={Proceedings of the London Mathematical Society},
  volume={2},
  number={1},
  pages={347--409},
  year={1915},
  publisher={Oxford University Press}
}

@article{axler2023robin,
  title={\href{https://doi.org/10.1007/s11139-022-00683-0}{On {R}obin’s inequality}},
  author={Axler, Christian},
  journal={The Ramanujan Journal},
  volume={61},
  number={3},
  pages={909--919},
  year={2023},
  publisher={Springer},
}

@article{nicolas2022sum,
  title={\href{https://doi.org/10.1007/s11139-021-00491-y}{The sum of divisors function and the {R}iemann hypothesis}},
  author={Nicolas, Jean-Louis},
  journal={The Ramanujan Journal},
  volume={58},
  number={4},
  pages={1113--1157},
  year={2022},
  publisher={Springer}
}

@article{banks2009nicolas,
  title={\href{https://doi.org/10.1007/s00605-008-0022-x}{The {N}icolas and {R}obin inequalities with sums of two squares}},
  author={Banks, William D and Hart, Derrick N and Moree, Pieter and Wesley Nevans, C},
  journal={Monatshefte f{\"u}r Mathematik},
  volume={157},
  number={4},
  pages={303--322},
  year={2009},
  publisher={Springer}
}

@article{broughan2015robin,
  title={\href{https://doi.org/10.5281/zenodo.10456122}{{R}obin's inequality for 11-free integers}},
  author={Broughan, Kevin A and Trudgian, Tim},
  year={2015},
  journal={Integers},
  volume={15},
  number={A12}
}

@article{platt2021robin,
  title={\href{https://doi.org/10.5281/zenodo.10807556}{{R}obin's inequality for 20-free integers}},
  author={Platt, David J and Morrill, Thomas},
  journal={Integers: Electronic Journal of Combinatorial Number Theory},
  volume={21},
  pages={28},
  year={2021},
  publisher={University of West Georgia}
}

@article{sole2012robin,
  title={\href{https://doi.org/10.1515/integ.2011.103}{The {R}obin inequality for 7-free integers}},
  author={Sol{\'e}, Patrick and Planat, Michel},
  year={2012},
  journal={Integers},
  volume={12.2},
  pages={301-309},
}

@article{luca2025robin,
  title={\href{https://doi.org/10.5281/zenodo.14907242}{On {R}obin's inequality}},
  author={Luca, Florian and Sol{\'e}, Patrick},
  journal={Integers: Electronic Journal of Combinatorial Number Theory},
  volume={25},
  pages={A8},
  year={2025}
}

@article{rosser1962approximate,
title={\href{https://doi.org/10.1215/ijm/1255631807}{Approximate formulas for some functions of prime numbers}},
  author={Rosser, J Barkley and Schoenfeld, Lowell},
  journal={Illinois Journal of Mathematics},
  volume={6},
  number={1},
  pages={64--94},
  year={1962},
  publisher={Duke University Press}
}

@article{robin1984grandes,
  title={\href{https://zbmath.org/0516.10036}{Grandes valeurs de la fonction somme des diviseurs et hypothèse de {R}iemann}},
  author={Robin, G},
  year={1984},
  journal={Journal de Mathématiques Pures et Appliquées},
  volume={63},
  number={2},
  pages={187–213}
}

@article{choie2007robin,
title={\href{https://doi.org/10.5802/jtnb.591}{On {R}obin's criterion for the {R}iemann hypothesis}},
  author={Choie, YoungJu and Lichiardopol, Nicolas and Moree, Pieter and Sol{\'e}, Patrick},
  journal={Journal de th{\'e}orie des nombres de Bordeaux},
  volume={19},
  number={2},
  pages={357--372},
  year={2007}
}

@book{broughan2017equivalents,
  title={\href{https://doi.org/10.1017/9781108178228}{Equivalents of the {R}iemann Hypothesis: Volume 1, Arithmetic Equivalents}},
  author={Broughan, Kevin},
  volume={164},
  year={2017},
  publisher={Cambridge University Press}
}
\end{document}